\documentclass[11pt,reqno]{amsart}

\usepackage[pdf]{pstricks}
\usepackage{amssymb}
\usepackage{amsmath}
\usepackage{amsfonts}
\usepackage{mathrsfs}
\usepackage{color}
\usepackage{enumerate}
\usepackage[T1]{fontenc}
\usepackage[latin1]{inputenc}
\usepackage{ae,aecompl}
\usepackage{graphicx}
\usepackage{nicefrac}
\usepackage[includehead,includefoot,margin=23mm]{geometry}
\usepackage{bigstrut}
\usepackage{hyperref}
\hypersetup{
    colorlinks=true,       
    linkcolor=blue,          
    citecolor=blue,        
    filecolor=blue,      
    urlcolor=blue           
}

\theoremstyle{plain}
\newtheorem{theorem}{Theorem}[section]
\newtheorem{lemme}[theorem]{Lemma}
\newtheorem{proposition}[theorem]{Proposition}
\newtheorem{corollaire}[theorem]{Corollary}
\newtheorem*{theorem*}{Theorem}
 
\theoremstyle{definition}
\newtheorem{definition}[theorem]{Definition}
\newtheorem{rappel}[theorem]{}

\newtheorem{exemple}[theorem]{Example}
\newtheorem{notation}[theorem]{Notation}

\theoremstyle{remark}
\newtheorem{remarque}[theorem]{Remark}

\newcommand{\PP}[0]{\ensuremath{\mathbb{P}}}

\newcommand{\ZZ}[0]{\ensuremath{\mathbb{Z}}}
\newcommand{\GA}[0]{\ensuremath{\mathbb{G}_{\mathrm{a}}}}
\newcommand{\GM}[0]{\ensuremath{\mathbb{G}_{\mathrm{m}}}}
\newcommand{\AF}[0]{\ensuremath{\mathbb{A}}}

\newcommand{\RR}[0]{\ensuremath{\mathbb{R}}}
\newcommand{\QQ}[0]{\ensuremath{\mathbb{Q}}}
\newcommand{\TT}[0]{\ensuremath{\mathbb{T}}}
\newcommand{\KK}[0]{\ensuremath{\mathbf{k}}}
\newcommand{\OO}[0]{\ensuremath{\mathcal{O}}}

\newcommand{\DD}[0]{\ensuremath{\mathfrak{D}}}

\newcommand{\rt}[0]{\ensuremath{\operatorname{Rt}}}
\newcommand{\chara}[0]{\ensuremath{\operatorname{char}}}

\newcommand{\fract}[0]{\ensuremath{\operatorname{Frac}}}

\newcommand{\spec}[0]{\ensuremath{\operatorname{Spec}}}

\newcommand{\Aut}[0]{\ensuremath{\operatorname{Aut}}}

\newcommand{\divi}[0]{\ensuremath{\operatorname{div}}}

\newcommand{\ord}[0]{\ensuremath{\operatorname{ord}}}
\newcommand{\pol}[0]{\ensuremath{\operatorname{Pol}}}

\newcommand{\rank}[0]{\ensuremath{\operatorname{rank}}}

\newcommand{\homo}[0]{\ensuremath{\operatorname{Hom}}}
\newcommand{\relint}[0]{\ensuremath{\operatorname{rel.int}}}

\def\G{{\mathbb{G}}}

\title[$\GA$-actions on affine $\TT$-varieties of complexity
one]{Additive group actions on affine $\TT$-varieties of
  complexity one in arbitrary characteristic}

\author{Kevin Langlois}
\address{Max Planck Institute for Mathematics, Bonn, Germany}
\email{langlois.kevin18@gmail.com}

\author{Alvaro Liendo} \address{Instituto de Matem\'atica y F\'\i
  sica, Universidad de Talca, Casilla 721, Talca, Chile}
\email{aliendo@inst-mat.utalca.cl}

\date{\today}

\thanks{{\it 2000 Mathematics Subject
    Classification}:   14R05, 14R20, 13N15, 14M25.\\
  \mbox{\hspace{11pt}}{\it Key words}: torus actions, locally finite
  iterative higher derivations, affine varieties.}

\begin{document}

\maketitle

\begin{abstract}
  Let $X$ be a normal affine $\TT$-variety of complexity at most one
  over a perfect field $\KK$, where $\TT=\GM^n$ stands for the split
  algebraic torus. Our main result is a classification of additive
  group actions on $X$ that are normalized by the $\TT$-action. This
  generalizes the classification given by the second author in the
  particular case where $\KK$ is algebraically closed and of
  characteristic zero.

  With the assumption that the characteristic of $\KK$ is positive, we
  introduce the notion of rationally homogeneous locally finite
  iterative higher derivations which corresponds geometrically to
  additive group actions on affine $\TT$-varieties normalized up to a
  Frobenius map. As a preliminary result, we provide a complete
  description of these $\GA$-actions in the toric situation.
\end{abstract}

\tableofcontents

\section*{Introduction}

Let $\KK$ be an arbitrary field. In this paper a variety $X$ is an
integral separated scheme of finite type over the field $\KK$. We
assume further that $\KK$ is algebraically closed in the field of
rational functions $\KK(X)$. A point in $X$ is a not necessarily
rational closed point. A variety is called normal if all its local
rings are integrally closed domains. All algebraic group actions are,
in particular, regular morphisms.

Let $\TT=\GM^n$ be the $n$-dimensional split algebraic torus, where
$\GM$ stands for the multiplicative group of $\KK$. A $\TT$-variety is
a normal variety endowed with an effective action of $\TT$. The
complexity of a $\TT$-variety $X$ is the non-negative integer $\dim
X-\dim \TT$. If the base field $\KK$ is algebraically closed, then the
complexity of $X$ can be read off geometrically as the codimension of
the generic orbit. The best known examples of $\TT$-varieties are
those of complexity zero, called toric varieties.

Let $\GA$ be the additive group of the field $\KK$. The main result of
this paper is a classification of the $\GA$-actions on an affine
$\TT$-variety $X$ that are normalized by $\TT$ in the cases where $X$
is of complexity zero or one. This generalizes a paper by the second
author \cite{Li}, where the same result is obtained in the particular
case where $\KK$ is algebraically closed and of characteristic
zero. The case of normalized $\GA$-actions on an affine $\GM$-surface
over the field of complex numbers was first studied in \cite{FZ2}.

Let $M$ be the character lattice of $\TT$ and let $N$ be the lattice
of one-parameter subgroups. We have a natural duality $M_{\RR}\times
N_{\RR}\rightarrow \RR$ given by $(m,v)\mapsto\langle m,v\rangle$
between the vector spaces $M_\RR = M\otimes_{\ZZ}\RR$ and $N_\RR =
N\otimes_\ZZ\RR$. Recall that $\TT$-actions on an affine variety
corresponds to $M$-gradings on its coordinate ring.

Affine $\TT$-varieties can be described in combinatorial terms. In the
case of toric varieties, there is the well-known description of affine
toric varieties via strongly convex rational polyhedral cones in
$N_\RR$ \cite{Dem,Od}. In 2006, Altmann and Hausen gave a
combinatorial description of affine $\TT$-varieties of arbitrary
complexity over an algebraically closed field of characteristic zero
\cite{AH}. This intersects with previous works by several authors
\cite{KKMS,Dem88,Ti2,FZ,Ti} (see also \cite{AHS,AIPSV} for the theory
of non-necessarily affine $\TT$-varieties). Furthermore, in a recent
paper, the first author generalized the combinatorial description due
to Altmann and Hausen to the case of affine $\TT$-varieties of
complexity one over an arbitrary field \cite{La2}.

The combinatorial description of affine $\TT$-varieties of complexity
one that we will use in this paper encodes an affine $\TT$-variety $X$
with a triple $(C,\sigma,\DD)$, where $C$ is a regular curve, $\sigma$
is a strongly convex rational polyhedral cone in $N_\RR$ and $\DD$ is
a $\sigma$-polyhedral divisor on $C$, i.e., a divisor in $C$ whose
coefficients instead of integers are polyhedra in $N_\RR$ that can be
decomposed as a Minkowski sum $Q+\sigma$ with $Q$ a compact polyhedron
(see Section~\ref{sec1} for details).

It is well known that the additive group actions on an affine variety
$X=\spec A$ are in one to one correspondence with certain sequences
$\partial = \{\partial^{(i)}:A\rightarrow A\}_{i\in\ZZ_{\geq 0}}$ of
$\KK$-linear operators on $A$ called locally finite iterative higher
derivations \cite{Mi,Cr,CM}, or LFIHDs for short (see
Definition~\ref{sec:sit2.1} for details). Now, assume that $X=\spec A$
is an affine $\TT$-variety and let $\partial$ be an LFIHD on $A$. The
LFIHD $\partial$ is called homogeneous of degree $e\in M$ if every
$\partial^{(i)}$ is homogeneous of degree $ie$. Furthermore, in
positive characteristic, we introduce the technical notion of
rationally homogeneous LFIHDs as follows: let $p>0$ be the
characteristic of $\KK$ and let $r\in \ZZ_{\geq 0}$, then $\partial$
is called rationally homogeneous of degree $e/p^r$ if
$\partial^{(ip^r)}$ is homogeneous of degree $ie$ and
$\partial^{(j)}=0$ whenever $p^r$ does not divide $j$.

In the case where $\KK$ is algebraically closed, the notion of
(rationally) homogeneous LFIHD translates into geometric terms in the
following way. An LFIHD on $A$ is homogeneous if and only if the
corresponding $\GA$-action on $X$ is normalized by the
$\TT$-action. Moreover, let $F_{p^r}:\GA\rightarrow \GA$ be the
Frobenius map sending $t\mapsto t^{p^r}$. If $\partial$ is an LFIHD
and $\phi:\GA\rightarrow \Aut(X)$ is the corresponding $\GA$-action,
then $\partial$ is rationally homogeneous if and only if $\phi\circ
F_{p^r}^{-1}$ is normalized by the $\TT$-action for some $r\in
\ZZ_{\geq 0}$ (see Proposition~\ref{sec:semidirect}). In this case we
say that $\phi$ is normalized by the $\TT$-action up to a Frobenius
map.

The kernel $\ker\partial$ of an LFIHD $\partial$ is defined as the
intersection of $\ker \partial^{(i)}$ for all $i \in \ZZ_{> 0}$; it
is equal to the ring $\KK[X]^{\GA}$ of $\GA$-invariant regular
functions on $X$ and $\fract(\ker\partial)$ corresponds to the field
$\KK(X)^{\GA}$ of $\GA$-invariant rational functions on $X$. Denote by
$\KK(X)^\TT$ the field of $\TT$-invariant rational functions on $X$. A
(rationally) homogeneous LFIHD is called vertical if
$\KK(X)^{\TT}\subseteq \KK(X)^{\GA}$ and horizontal otherwise. When
$\KK$ is algebraically closed, the horizontal condition means
geometrically that the general $\GA$-orbits are transverse to the
rational fibration defined by the $\TT$-action.

Let $X=\spec A$ be the affine toric variety given by the strongly
convex rational cone $\sigma\subseteq N_\RR$. We denote by $\sigma(1)$
the set of extremal rays of the cone $\sigma$. In
Theorem~\ref{sec:th3.5} we classify normalized $\GA$-actions on affine
toric varieties. They are described by Demazure roots of the cone
$\sigma$, i.e., vectors $e\in M$ such that there exists
$\rho\in\sigma(1)$ with $\langle e,\rho\rangle=-1$ and $\langle
e,\rho'\rangle\geq 0$, for all $\rho'\in \sigma(1)$ different from
$\rho$. We also classify $\GA$-actions on affine toric varieties that
are normalized up to a Frobenius map (see Corollary~\ref{cor3.6}). Let
us mention some developments from the theory of Demazure roots.  The
reader may consult \cite{Dem,Cox95,Ni,Ba,Cox14,AHHL} for the study of
automorphisms of complete $\TT$-varieties via Demazure's roots and
\cite{Li3,Ko} for the roots of the affine Cremona groups. See also
\cite{LP} for a geometric description in the setting of affine
spherical varieties.

Let now $X=\spec A$ be an affine $\TT$-variety of complexity one given
by the triple $(C,\sigma,\DD)$. The classification of normalized
$\GA$-actions on such an $X$ is divided into two theorems
corresponding to vertical and horizontal LFIHDs. The classification of
vertical LFIHDs on $A$ is given in Theorem~\ref{sec:th4.4}. They are
described by pairs $(e,\varphi)$, where $e$ is a Demazure root of
$\sigma$ and $\varphi$ is a global section of the invertible sheaf
$\OO_C(\DD(e))$. The $\QQ$-divisor $\DD(e)$ is uniquely determined by
$\DD$ and $e$ in a combinatorial way. The classification of horizontal
LFIHDs on $A$ is only available when $\KK$ is perfect, see
Theorem~\ref{th:5.11}. Its combinatorial counterpart is different from
the characteristic zero case (compare with \cite[Theorem~3.28]{Li})
and is related to the description of rationally homogeneous LFIHDs on
affine toric varieties.

The content of the paper is the following. In Section~\ref{sec1} we
present the combinatorial description of affine $\TT$-varieties of
complexity one that will be used in this paper. In Section~\ref{sec2}
we introduced the background results on $\GA$-actions. In
Section~\ref{sec3} we obtain our classification result for toric
varieties. Finally, the classification of normalized $\GA$-actions on
affine $\TT$-varieties of complexity one is divided in
Sections~\ref{sec4} and \ref{sec5} corresponding to the vertical and
horizontal cases, respectively.

\section{Generalities on affine $\TT$-varieties of complexity one}
\label{sec1}

In this section, we recall a combinatorial description of affine
$\TT$-varieties of complexity one over an arbitrary field
\cite[Section 3]{La2}.  Let $\mathbf{k}$ be field and let $X = \spec
A$ be an affine variety over $\KK$.  We start by introducing some
notation from convex geometry (see e.g. \cite{Od} or \cite[ Section
1]{AH}).

\begin{rappel}
  Let $\TT\simeq \GM^{n}$ be a split algebraic torus over $\KK$.
  Denote by $M = \homo(\TT,\GM)$ the character lattice of $\TT$ and
  let $N = \homo(\GM,\TT)$ be the lattice of one-parameter
  subgroups. We have a natural duality $M_{\RR}\times
  N_{\RR}\rightarrow \RR$ given by $(m,v)\mapsto\langle m,v\rangle$,
  where $M_\RR = M\otimes_{\ZZ}\RR$ and $N_\RR = N\otimes_\ZZ\RR$ are
  the associated real vector spaces. We also let $M_\QQ =
  M\otimes_{\ZZ}\QQ$ and $N_\QQ = N\otimes_\ZZ\QQ$ be the
  corresponding rational vector spaces.

  A \emph{rational cone} in $N_{\RR}$ is a cone generated by a finite
  subset of $N$. If $\sigma\subseteq N_{\RR}$ is a rational cone, then
  we let $\sigma^{\vee}\subseteq M_{\RR}$ be its dual cone, i.e., the
  cone of real linear forms on $M_{\RR}$ that are non-negative on
  $\sigma$. Recall that the dual cone $\sigma^\vee$ of a rational cone
  is again rational. The relative interior of a rational cone
  $\sigma\subseteq N_\RR$, denoted by $\relint(\sigma)$, is the
  topological interior of $\sigma$ in the span of $\sigma$ inside
  $N_\RR$.

  For any face $F\subseteq \sigma$ the set $F^\star$ stands for the
  dual face of $F$ in $\sigma^{\vee}$, i.e., $F^\star=F^\bot\cap
  \sigma^\vee$.  A rational cone $\sigma$ is \emph{strongly convex} if
  $0$ is a face of $\sigma$. This is equivalent to say that the dual
  $\sigma^{\vee}\subseteq M_{\RR}$ is full dimensional. For any
  rational cone $\omega\subseteq M_{\RR}$ we let $\omega_{M} =
  \omega\cap M$.

  Furthermore, given a subsemigroup $S\subseteq M$ we let
  \[
  \KK[S] = \bigoplus_{m\in S}\KK\chi^{m}
  \]
  be the \emph{semigroup algebra} of $S$ defined by the relations
  $\chi^{m}\cdot\chi^{m'} = \chi^{m+m'}$ for all $m,m'\in S$ and
  $\chi^0=1$.

  For any integer $d\geq 0$ and any polyhedron $\Delta\subseteq N_{\RR}$
  we let $\Delta(d)$ be the set of faces of dimension $d$. In
  particular, $\Delta(0)$ is the set of vertices of $\Delta$.

  Let $\sigma\subseteq N_{\RR}$ be a strongly convex rational cone. We
  define $\pol_{\sigma}(N_{\RR})$ as the set of polyhedra in $N_\RR$
  that can be written as a Minkowski sum $Q+\sigma$, where $Q\subseteq
  N_{\RR}$ is a rational polytope, i.e., a bounded polyhedron having
  its vertices in the rational vector space $N_\QQ$.
\end{rappel}

\begin{rappel}
  A \emph{$\TT$-variety} is a normal variety endowed with an effective
  action of the algebraic torus $\TT$.  Recall that a $\TT$-action $X
  = \spec A$ is equivalent to an $M$-grading of the algebra $A$. In
  algebraic terms, a $\TT$-action on $X$ is effective if and only if
  the semigroup of weights of $A$ generates $M$. In this case the
  weight cone $\sigma^\vee$ of $A$ is the dual of a strongly convex
  rational cone $\sigma\subseteq N_{\RR}$.
\end{rappel}

\begin{rappel} \label{sit:1.3} %
  Let $X = \spec A$ be an affine $\TT$-variety. Letting $K_{0}=
  \KK(X)^{\TT}$ be the field of $\TT$-invariant rational functions on
  $X$ we can write
  \[
  A = \bigoplus_{m\in \sigma^{\vee}_{M}}A_{m}\chi^{m}
  \]
  as an $M$-graded subalgebra of $K_{0}[M]$. Here,
  $\sigma^{\vee}\subseteq M_{\RR}$ is the weight cone of $A$,
  $\chi^{m}$ is a weight vector in $\KK(X)$, $A_0=K_0\cap A$, and $A_m$ is
  an $A_0$-module contained in $K_0$. Furthermore, the weight vectors
  satisfy $\chi^0=1$, and $\chi^{m}\cdot\chi^{m'} = \chi^{m+m'}$ for
  all $m,m'\in M$.
 
  The \emph{complexity} of the $\TT$-variety $X$ is the transcendence
  degree of the field extension $K_{0}/\KK$.  Since the action is
  effective, it is also equal to $\rank M - \dim X$. In geometrical
  terms, when $\KK = \bar{\KK}$ is algebraically closed the complexity
  is the codimension of the generic $\TT$-orbit.

  A \emph{toric variety} is a $\TT$-variety of complexity zero. An
  affine toric variety $X=\spec A$ is completely determined by the
  weight cone $\sigma^{\vee}$ of $A$. Conversely, given a strongly
  convex rational cone $\sigma\subseteq N_{\RR}$, we can define an
  affine toric variety by letting $X_{\sigma} :=
  \spec\KK[\sigma^{\vee}_{M}]$.

  Another important class of affine $\TT$-varieties is provided by the
  surface case. If $X$ is an affine $\GM$-surface, then the coordinate
  ring $A = \KK[X]$ is endowed with a $\ZZ$-grading. Up to reversing
  the grading, we can assume that the subspace $A_{+} =
  \bigoplus_{m\in\ZZ_{>0}}A_{m}\chi^{m}$ is nonzero. We distinguish
  three cases (see \cite{FiKa}).

  \begin{enumerate}[$(i)$]
  \item The elliptic case: $A_{-} = \bigoplus_{m\in\ZZ_{<
        0}}A_{m}\chi^{m} = 0$ and $A_{0} = \KK$.
  \item The parabolic case: $A_{-} = 0$ and $A_{0}\neq\KK$.
  \item The hyperbolic case: $A_{-}\neq 0$.
  \end{enumerate}

  More generally, an affine $\TT$-variety $X = \spec A$ of complexity
  one is called \emph{elliptic} if $A_{0} = \KK$ (see \cite[Section
  1.1]{Li}).
\end{rappel}

To provide a description of affine $\TT$-varieties of complexity one,
we need to consider the Weil divisors theory on regular algebraic
curves. In the next paragraph, we recall the definitions we need.

\begin{rappel}
  Let $C$ be a regular curve over $\KK$. By a point belonging to $C$
  we mean a closed point. Letting $z\in C$ we let $[\kappa_{z}:\KK]$
  be the \emph{degree} of the point $z$ defined as the dimension of
  residue field $\kappa_{z}$ of $z$ over $\KK$ (see \cite[Proposition
  1.1.15]{St}). A point $z\in C$ of degree one is called a
  \emph{rational point}. \rm For a nonzero rational function
  $f\in\KK(C)^*$ the associated principal divisor is
  \[
  \divi f = \sum_{z\in C} \ord_{z}f \cdot z\,,
  \]
  where $\ord_{z}f$ is the order of $f$ at the point $z$. The
  \emph{degree} of a Weil $\QQ$-divisor $D = \sum_{z\in C}a_z\cdot z$
  is the rational number
  \[
  \deg D = \sum_{z\in C}[\kappa_{z} : \KK]\cdot a_{z}\,.
  \]
  If $C$ is projective, then we have $\deg\divi f=0$ (see \cite[Theorem
  1.4.11]{St}).  In addition, we let $\lfloor D\rfloor = \sum_{z\in
    C}\lfloor a_{z}\rfloor\cdot z$ be the integral Weil divisor
  obtained by taking the integral part of each coefficient of
  $D$. Similary, the $\QQ$-divisor $\{ D\} = D - \lfloor D\rfloor$
  stands for the fractional part of $D$. The space of global sections
  of the $\QQ$-divisor $D$ is defined by
  \[
  H^0(C,\OO_C( D)):=H^0(C,\OO_C(\lfloor D\rfloor)) =
  \left\{f\in\KK(C)^*\,|\,\divi f + D \geq 0\right\}\cup\{0\}.
  \]
  When $C$ is projective, $H^0(C,\OO_C( D))$ is usually called the
  \emph{Riemann-Roch space} of $D$.
\end{rappel}

The following has been introduced in \cite{AH} for any complexity in
the case where $\KK$ is algebraically closed of characteristic
zero. In our context, we give a similar definition.

\begin{definition} \label{sec:def-convex} %
  Let $C$ be a regular curve over $\KK$. Consider $\sigma\subseteq
  N_{\RR}$ a strongly convex rational cone. A
  \emph{$\sigma$-polyhedral divisor} over $C$ is a formal sum $\DD =
  \sum_{z\in C}\Delta_{z}\cdot z$, where each
  $\Delta_{z}\in\pol_{\sigma}(N_{\RR})$ and $\Delta_{z} = \sigma$ for
  all but finitely number of $z$.  For every coefficient $\Delta_{z}$
  of the $\sigma$-polyhedral divisor $\DD$ we define $h_z$ as the
  piecewise linear map $h_z:M_{\RR}\rightarrow \RR$ given by $m\mapsto
  \min_{v\in\Delta_{z}(0)}\langle m, v\rangle$. We remark that $h_z$
  restricted to $\sigma^\vee\subseteq M_\RR$ corresponds to the
  support function of $\Delta_z$.

  For any $m\in M_{\QQ}$ we define the \emph{evaluation} of $\DD$ as
  the $\QQ$-divisor
  \[
  \DD(m) = \sum_{z\in C}h_{z}(m)\cdot z\,.
  \]
  We denote by $\Lambda(\DD)$ the coarsest refinement of the quasifan
  of $\sigma^{\vee}$ such that the map $m\mapsto\DD(m)$ is linear in
  each cone. We also define the \emph{degree} of $\DD$ as
  \[
  \deg\DD = \sum_{z\in
    C}[\kappa_{z}:\KK]\cdot\Delta_z\in\pol_\sigma(N_\RR)\,.
  \]
 
  A $\sigma$-polyhedral divisor $\DD=\sum_{z\in C}\Delta_{z}\cdot z$
  is called \emph{proper} if it satisfies one of the following
  conditions.
  \begin{enumerate}[$(i)$]
  \item[\rm (i)] the curve $C$ is affine, or
  \item[\rm (ii)] the curve $C$ is projective, the polyhedron
    $\deg\DD$ is a proper subset of $\sigma$, and for every
    $m\in\sigma^{\vee}_M$ such that $\deg\DD(m)=0$, a nonzero integral
    multiple of $\DD(m)$ is principal.
  \end{enumerate}
\end{definition}

Actually, polyhedral divisors are combinatorial objects that allow us
to construct multigraded algebras, as explained in the following.

\begin{notation}
  To a $\sigma$-polyhedral divisor $\DD= \sum_{z\in C}\Delta_{z}\cdot
  z$ over $C$ we associate the rational $\TT$-submodule
  \[
  A[C,\DD] = \bigoplus_{m\in\sigma^{\vee}_{M}}A_m\cdot\chi^{m}\subseteq
  K_{0}[M],\quad\mbox{where}\quad A_m=H^{0}\big(C,
  \OO_{C}(\DD(m))\big)\mbox{ and } K_{0} = \KK(C)\,.
  \]
  Given $m,m'\in \sigma^\vee_M$, the evaluations satisfy
  $\DD(m)+\DD(m')\leq \DD(m+m')$. Hence, for every $f\in A_m$ and
  every $g\in A_{m'}$, the product $fg$ lies on $ A_{m+m'}$. This
  multiplication rule turns the vector space $A[C,\DD]$ into an
  $M$-graded subalgebra.

  For a non-empty open subset $C_{0}\subseteq C$ we let
  \[
  \DD_{|C_{0}} =\sum_{z\in C_{0}}\Delta_{z}\cdot z
  \]
  be the \emph{restriction} of $\DD$ to $C_{0}$.
\end{notation}

The following yields a description of the coordinate ring of an affine
$\TT$-variety of complexity one (for a proof see \cite[Theorem
4.3]{La2}). This description intersects with some classical cases; see
\cite{Ti}, \cite{Ti2} for complexity one case, \cite{AH} for higher
complexity, and \cite{FZ} for the Dolgachev-Pinkham-Demazure
presentation of affine complex $\mathbb{C}^*$-surfaces. For the
functorial properties of this description see
\cite[Proposition~4.5]{La2}.

\begin{theorem}
  \begin{enumerate}[$(i)$]
  \item If $\mathfrak{D}$ is a proper $\sigma$-polyhedral divisor on a
    regular curve $C$ over $\mathbf{k}$, then the $M$-graded algebra
    $A[C,\mathfrak{D}] = \bigoplus_{m\in\sigma^{\vee}\cap M}A_{m},$
    where $$A_{m} = H^{0}(C,\mathcal{O}_{C}(\mathfrak{D}(m))),$$ is
    the coordinate ring of an affine $\mathbb{T}$-variety of
    complexity one over $\mathbf{k}$.
  
  \item Conversely, to any affine $\mathbb{T}$-variety $X = \rm
    Spec\,\it A$ of complexity one over $\mathbf{k}$, one can
    associate a pair $(C_{X}, \mathfrak{D}_{X,\gamma})$ as follows.
  
    \begin{enumerate}[(a)]
    \item $C_{X}$ is the abstract regular curve over $\mathbf{k}$
      defined by the conditions $\mathbf{k}[C_{X}] =
      \mathbf{k}[X]^{\mathbb{T}}$ and $k(C_{X}) = k(X)^{\mathbb{T}}$.

    \item $\mathfrak{D}_{X,\gamma}$ is a proper
      $\sigma_{X}$-polyhedral divisor over $C_{X}$, which is uniquely
      determined by $X$ and by a sequence $\gamma = (\chi^{m})_{m\in
        M}$ of $k(X)$ as in~\ref{sit:1.3}.
    \end{enumerate}

    We have a natural identification $A =
    A[C_{X},\mathfrak{D}_{X,\gamma}]$ of $M$-graded algebras with the
    property that every homogeneous element $f\in A$ of degree $m$ is
    equal to $f_{m}\chi^{m}$, for a unique global section $f_{m}$ of
    the sheaf $\mathcal{O}_{C_{X}}(\mathfrak{D}_{X,\gamma}(m))$.
  \end{enumerate}
\end{theorem}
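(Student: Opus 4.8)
**The plan is to establish a correspondence between affine $\TT$-varieties of complexity one and proper $\sigma$-polyhedral divisors in both directions.**

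For direction $(i)$, the plan is to verify that $A[C,\DD]$ is indeed the coordinate ring of a normal affine $\TT$-variety of complexity one. First I would check that $A[C,\DD]$ is a finitely generated $\KK$-algebra. The properness hypothesis is exactly what guarantees this: in the affine case $(i)$ the ring $K_0 = \KK(C)$ already contains enough functions, whereas in the projective case $(ii)$ the condition that $\deg\DD$ be a proper subset of $\sigma$, together with the principality condition on degree-zero evaluations, ensures the Riemann--Roch spaces $A_m = H^0(C,\OO_C(\DD(m)))$ are finite-dimensional and that their graded pieces stabilize appropriately to yield finite generation. Next I would verify normality: since each $A_m$ is a full space of global sections of a $\QQ$-divisor on a regular curve $C$, the algebra $A[C,\DD]$ is an intersection of valuation rings (one for each point $z\in C$ and each ray of $\sigma$), hence integrally closed in $K_0[M]$. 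The $M$-grading gives the effective $\TT$-action, and since $K_0 = \KK(C)$ has transcendence degree one over $\KK$ (as $C$ is a curve), the complexity is one by Remark~\ref{sit:1.3}.

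For the converse direction $(ii)$, the plan is to reconstruct the combinatorial data from $X$. Given $X = \spec A$ of complexity one, by Remark~\ref{sit:1.3} we have $K_0 = \KK(X)^\TT$ and a decomposition $A = \bigoplus_{m\in\sigma^\vee_M} A_m\chi^m$. The curve $C_X$ is defined abstractly as the regular model with $\KK[C_X] = \KK[X]^\TT$ and $\KK(C_X) = K_0$; here one must invoke that any field extension of transcendence degree one over $\KK$ in which $\KK$ is algebraically closed corresponds to a unique regular (abstract) curve. After fixing a homogeneous system of weight vectors $\gamma = (\chi^m)_{m\in M}$ as in Remark~\ref{sit:1.3}, each $A_m$ becomes a fractional ideal in $K_0$, and I would show it has the form $H^0(C_X,\OO_{C_X}(D_m))$ for a suitable $\QQ$-divisor $D_m$. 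The key point is to verify that the assignment $m\mapsto D_m$ arises from evaluating a single $\sigma_X$-polyhedral divisor $\DD_{X,\gamma}$, using the subadditivity $D_m + D_{m'}\leq D_{m+m'}$ coming from $A_m\cdot A_{m'}\subseteq A_{m+m'}$ together with convexity to recover the polyhedral coefficients $\Delta_z$.

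\textbf{The main obstacle} will be the converse direction, specifically proving that the fractional ideals $A_m$ are genuinely full Riemann--Roch spaces of a polyhedral divisor rather than arbitrary $A_0$-submodules of $K_0$. This requires showing that the support functions $m\mapsto \ord_z$ (of the ideal $A_m$ at each point $z$) are piecewise linear and concave in $m$, and that they are the support functions of honest polyhedra $\Delta_z\in\pol_\sigma(N_\RR)$; establishing the proper decomposition $\Delta_z = Q_z + \sigma$ with $Q_z$ a rational polytope, and checking the global properness conditions, is where the real work lies. Since this theorem is quoted from \cite[Theorem 4.3]{La2}, I would ultimately defer the full verification to that reference, but the sketch above indicates how the combinatorial and the algebraic data match up.
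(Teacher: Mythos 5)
The paper gives no proof of this statement: it is imported verbatim from \cite[Theorem 4.3]{La2}, so there is no in-paper argument to compare against. Your sketch follows the standard Altmann--Hausen-style line of that reference (properness giving finite generation, normality via full section spaces, and in the converse direction recovering the polyhedral coefficients from the concave piecewise-linear support functions of the fractional ideals $A_m$), correctly identifies where the real work lies, and defers to the same source the paper cites, so it is consistent with the paper's treatment.
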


\begin{exemple} \label{ex:1.8} Let $M=\ZZ^2$ and let $\sigma$ be the
  first quadrant in the vector space $N_\RR=\RR^2$. We also let
  $\Delta_0=(1/2,0)+\sigma$, $\Delta_1=L+\sigma$ and
  $\Delta_\infty=(1/2,0)+\sigma$, where $L$ is the line segment
  joining the points $(0,0)$ and $(-1/2,1/2)$.

  \begin{figure}[!ht]
    \input{cplone.tex}
  \end{figure}
  
  Letting $\KK$ be an arbitrary field and $C=\PP^1_\KK$ we let $\DD$ be
  the $\sigma$-polyhedral divisor
  $\DD=\Delta_0\cdot[0]+\Delta_1\cdot[1]+\Delta_\infty\cdot[\infty]$
  over $C$. The degree of $\DD$ is $\deg\DD=L'+\sigma$, where $L'$ is
  the line segment joining the points $(1,0)$ and $(1/2,1/2)$.

  \begin{figure}[!ht]
\psset{xunit=.8pt,yunit=.8pt,runit=.8pt}
\begin{pspicture}(184.25,134.625)
{
\newrgbcolor{curcolor}{0.7019608 0.7019608 0.7019608}
\pscustom[linewidth=0.80000001,linecolor=curcolor,linestyle=dashed,dash=2.4 2.4]
{
\newpath
\moveto(184.26305,120.453524)
\lineto(0.01109,120.453524)
}
}
{
\newrgbcolor{curcolor}{0.7019608 0.7019608 0.7019608}
\pscustom[linewidth=0.80000001,linecolor=curcolor,linestyle=dashed,dash=2.4 2.4]
{
\newpath
\moveto(184.26305,85.02045)
\lineto(0.01108,85.02045)
}
}
{
\newrgbcolor{curcolor}{0.7019608 0.7019608 0.7019608}
\pscustom[linewidth=0.80000001,linecolor=curcolor,linestyle=dashed,dash=2.4 2.4]
{
\newpath
\moveto(184.26305,49.58738)
\lineto(0.01108,49.58738)
}
}
{
\newrgbcolor{curcolor}{0.7019608 0.7019608 0.7019608}
\pscustom[linewidth=0.80000001,linecolor=curcolor,linestyle=dashed,dash=2.4 2.4]
{
\newpath
\moveto(184.26305,14.15431)
\lineto(0.01107,14.15431)
}
}
{
\newrgbcolor{curcolor}{0.7019608 0.7019608 0.7019608}
\pscustom[linewidth=0.80000001,linecolor=curcolor,linestyle=dashed,dash=2.4 2.4]
{
\newpath
\moveto(28.35753,134.626752)
\lineto(28.35753,-0.0121)
}
}
{
\newrgbcolor{curcolor}{0.7019608 0.7019608 0.7019608}
\pscustom[linewidth=0.80000001,linecolor=curcolor,linestyle=dashed,dash=2.4 2.4]
{
\newpath
\moveto(63.79061,134.626752)
\lineto(63.79061,-0.0121)
}
}
{
\newrgbcolor{curcolor}{0.7019608 0.7019608 0.7019608}
\pscustom[linewidth=0.80000001,linecolor=curcolor,linestyle=dashed,dash=2.4 2.4]
{
\newpath
\moveto(99.22368,134.626752)
\lineto(99.22368,-0.0121)
}
}
{
\newrgbcolor{curcolor}{0.7019608 0.7019608 0.7019608}
\pscustom[linewidth=0.80000001,linecolor=curcolor,linestyle=dashed,dash=2.4 2.4]
{
\newpath
\moveto(134.65675,134.626752)
\lineto(134.65675,-0.0121)
}
}
{
\newrgbcolor{curcolor}{0.7019608 0.7019608 0.7019608}
\pscustom[linewidth=0.80000001,linecolor=curcolor,linestyle=dashed,dash=2.4 2.4]
{
\newpath
\moveto(170.08982,134.626752)
\lineto(170.08982,-0.01209)
}
}
{
\newrgbcolor{curcolor}{0.60000002 0.60000002 0.60000002}
\pscustom[linestyle=none,fillstyle=solid,fillcolor=curcolor]
{
\newpath
\moveto(152.37328,14.15432)
\lineto(63.7906,14.15432)
\lineto(46.07407,31.87085)
\lineto(46.07407,120.453523)
\closepath
}
}
{
\newrgbcolor{curcolor}{0 0 0}
\pscustom[linewidth=1,linecolor=curcolor]
{
\newpath
\moveto(46.07407,120.453523)
\lineto(46.07407,31.87085)
\lineto(63.7906,14.15432)
\lineto(152.37328,14.15432)
}
}
{
\newrgbcolor{curcolor}{0 0 0}
\pscustom[linestyle=none,fillstyle=solid,fillcolor=curcolor]
{
\newpath
\moveto(46.07407,116.453523)
\lineto(48.07407,114.453523)
\lineto(46.07407,121.453523)
\lineto(44.07407,114.453523)
\lineto(46.07407,116.453523)
\closepath
}
}
{
\newrgbcolor{curcolor}{0 0 0}
\pscustom[linewidth=0.5,linecolor=curcolor]
{
\newpath
\moveto(46.07407,116.453523)
\lineto(48.07407,114.453523)
\lineto(46.07407,121.453523)
\lineto(44.07407,114.453523)
\lineto(46.07407,116.453523)
\closepath
}
}
{
\newrgbcolor{curcolor}{0 0 0}
\pscustom[linestyle=none,fillstyle=solid,fillcolor=curcolor]
{
\newpath
\moveto(148.37328,14.15432)
\lineto(146.37328,12.15432)
\lineto(153.37328,14.15432)
\lineto(146.37328,16.15432)
\lineto(148.37328,14.15432)
\closepath
}
}
{
\newrgbcolor{curcolor}{0 0 0}
\pscustom[linewidth=0.5,linecolor=curcolor]
{
\newpath
\moveto(148.37328,14.15432)
\lineto(146.37328,12.15432)
\lineto(153.37328,14.15432)
\lineto(146.37328,16.15432)
\lineto(148.37328,14.15432)
\closepath
}
}

\rput(22.77093506,7.06771851){\tiny{$0$}}
\rput(111.35358923,93){\small{$\deg\DD\subseteq N_\RR$}}

\end{pspicture}
  \end{figure}
 
  Hence $\deg\DD\subsetneq \sigma$ and $\DD$ is proper. Let
  $A=A[C,\DD]$ and $X=\spec A$. A direct computation shows that the
  elements
  \[u_1=\frac{t-1}{t}\cdot\chi^{(2,0)},\quad u_2=\chi^{(0,1)},\quad
  u_3=\chi^{(1,1)},\quad u_4=\frac{(t-1)^2}{t}\cdot\chi^{(2,0)},
  \quad\mbox{and}\quad u_5=\frac{(t-1)^2}{t}\cdot\chi^{(3,0)}\]
  generate the algebra $A$. Furthermore, a minimal set of relations
  satisfied by these generators is given by $u_2u_5-u_3u_4=0$,
  $u_3u_5-u_1^2u_2-u_1u_2u_4=0$ and $u_5^2-u_1^2u_4-u_1u_4^2=0$. Hence
  \[A\simeq
  k[x_1,x_2,x_3,x_4,x_5]/(x_2x_5-x_3x_4\, ,\, x_3x_5-x_1^2x_2-x_1x_2x_4\, ,\,
  x_5^2-x_1^2x_4-x_1x_4^2)\,.\]
\end{exemple}

The following result provides a calculation of the Altmann--Hausen
presentation in terms of polyhedral divisors when we extend the
scalars to an algebraic closure of $\KK$, see \cite[Proposition
3.9]{La2}.

\begin{lemme}\label{lem1.8}
  Assume that $\mathbf{k}$ is a perfect field, and let
  $\bar{\mathbf{k}}$ be an algebraic closure of $\mathbf{k}$.  The
  absolut Galois group of $\mathfrak{G}_{\bar{\mathbf{k}}/\mathbf{k}}$
  acts on the closed points of the curve $$C_{\bar{\mathbf{k}}} =
  C\times_{{\rm Spec}\, \mathbf{k}}{\rm Spec}\,\bar{\mathbf{k}}$$
  which can be identified with the set of the
  $\bar{\mathbf{k}}$-rational points of $C(\bar{\mathbf{k}})$.  The
  orbit space
  $C(\bar{\mathbf{k}})/\mathfrak{G}_{\bar{\mathbf{k}}/\mathbf{k}}$ can
  be identified with $C$. We denote by $S :
  C(\bar{\mathbf{k}})\rightarrow C$ the quotient map.  If
  $\mathfrak{D} = \sum_{z\in C}\Delta_{z}\cdot z$ is a proper
  $\sigma$-polyhedral divisor over $C$, then
  \[
  A[C,\mathfrak{D}]\otimes_{\mathbf{k}}\bar{\mathbf{k}} =
  A\left[C(\bar{\mathbf{k}}), \mathfrak{D}_{\bar{\mathbf{k}}}\right],
  \]
  where $\mathfrak{D}_{\bar{\mathbf{k}}}$ is the proper
  $\sigma$-polyhedral divisor over $C(\bar{\mathbf{k}})$ defined by
  \[
  \mathfrak{D}_{\bar{\mathbf{k}}} = \sum_{z\in C}\Delta_{z}\cdot
  S^{\star}(z)\,\,\,\rm with\,\,\,\it S^{\star}(z) = \sum_{z'\in
    S^{-\rm 1\it}(z)}z'.
  \]
\end{lemme}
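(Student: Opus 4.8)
The plan is to verify the claimed identity graded piece by graded piece, after checking that the ambient multiplicative structures are compatible with the base change. Write $A[C,\DD]=\bigoplus_{m\in\sigma^\vee_M}A_m\chi^m$ with $A_m=H^0(C,\OO_C(\DD(m)))\subseteq K_0=\KK(C)$, and likewise $A[C(\bar\KK),\DD_{\bar\KK}]=\bigoplus_{m\in\sigma^\vee_M}\bar A_m\chi^m$ with $\bar A_m=H^0(C(\bar\KK),\OO_{C(\bar\KK)}(\DD_{\bar\KK}(m)))$. Since $\KK$ is perfect and algebraically closed in $K_0\subseteq\KK(X)$, the extension $K_0/\KK$ is regular, so $C(\bar\KK)$ is integral (indeed smooth over $\bar\KK$) and $K_0\otimes_\KK\bar\KK$ embeds into $\bar\KK(C(\bar\KK))$. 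As the multiplication in both algebras is the restriction of the multiplication of $K_0[M]\otimes_\KK\bar\KK\subseteq\bar\KK(C(\bar\KK))[M]$ (the inclusion staying injective because $-\otimes_\KK\bar\KK$ is exact), and as $-\otimes_\KK\bar\KK$ commutes with direct sums, it suffices to produce, for each $m\in\sigma^\vee_M$, a natural identification $A_m\otimes_\KK\bar\KK=\bar A_m$ inside $\bar\KK(C(\bar\KK))$ compatible with the $M$-grading; the weight vector $\chi^m$ is the same on both sides.

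The geometric heart of the argument is the local structure of the base change morphism $\pi:C(\bar\KK)\to C$, and this is where perfectness is essential. Since $\KK$ is perfect the extension $\bar\KK/\KK$ is separable, so for every closed point $z\in C$ the residue field $\kappa_z$ is separable over $\KK$ and $\kappa_z\otimes_\KK\bar\KK\cong\bar\KK^{\,[\kappa_z:\KK]}$. Thus the scheme-theoretic fibre $\pi^{-1}(z)$ is reduced and consists of exactly $[\kappa_z:\KK]$ distinct points, namely the Galois orbit $S^{-1}(z)$, and $\pi$ is étale over $z$. Consequently the flat pullback of the prime divisor $z$ carries no multiplicities: $\pi^*z=S^\star(z)=\sum_{z'\in S^{-1}(z)}z'$. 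For an arbitrary $\QQ$-divisor $D=\sum_z a_z\,z$ on $C$ this gives $\pi^*D=\sum_{z'}a_{S(z')}\,z'$, whence $\lfloor\pi^*D\rfloor=\pi^*\lfloor D\rfloor$ since all points of a fibre receive the same coefficient. Applying this to $D=\DD(m)=\sum_z h_z(m)\,z$ and recalling that $\DD_{\bar\KK}$ attaches the polyhedron $\Delta_{S(z')}$ to each $z'$, we get $\DD_{\bar\KK}(m)=\pi^*\DD(m)$ and $\lfloor\DD_{\bar\KK}(m)\rfloor=\pi^*\lfloor\DD(m)\rfloor$. As $C$ is regular, $\lfloor\DD(m)\rfloor$ is Cartier, so $\OO_{C(\bar\KK)}(\DD_{\bar\KK}(m))=\pi^*\OO_C(\DD(m))$.

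It remains to pass from sheaves to global sections. The morphism $g:\spec\bar\KK\to\spec\KK$ is flat and $\pi$ is its base change along $C\to\spec\KK$; applying flat base change to the invertible sheaf $\OO_C(\DD(m))$ yields
\[
A_m\otimes_\KK\bar\KK=H^0\big(C,\OO_C(\DD(m))\big)\otimes_\KK\bar\KK=H^0\big(C(\bar\KK),\pi^*\OO_C(\DD(m))\big)=\bar A_m .
\]
When $C$ is affine this is just exactness of global sections and flatness of $-\otimes_\KK\bar\KK$, and when $C$ is projective it is the flat base change theorem for $C\to\spec\KK$ in degree $0$. Summing over $m\in\sigma^\vee_M$ assembles the desired $M$-graded algebra isomorphism $A[C,\DD]\otimes_\KK\bar\KK=A[C(\bar\KK),\DD_{\bar\KK}]$.

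Finally one checks that $\DD_{\bar\KK}$ is again a proper $\sigma$-polyhedral divisor, so that the right-hand side is a bona fide coordinate ring: the degree is unchanged, as $\deg\DD_{\bar\KK}=\sum_{z'}\Delta_{S(z')}=\sum_z|S^{-1}(z)|\,\Delta_z=\sum_z[\kappa_z:\KK]\,\Delta_z=\deg\DD$, and $\pi^*$ sends principal divisors to principal divisors, so the principality condition in the projective case persists. The main obstacle is the second paragraph: controlling the fibres of $\pi$ and showing that taking integral parts commutes with pullback. This is exactly the point where the perfectness of $\KK$ cannot be dropped, since inseparability would make the fibres non-reduced and destroy the clean identity $\pi^*z=S^\star(z)$.
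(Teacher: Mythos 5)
Your proof is correct. Note that the paper does not actually prove this lemma in the text--it simply cites \cite[Proposition~3.9]{La2}--so there is no in-paper argument to compare against; your route (separability of $\bar{\mathbf{k}}/\mathbf{k}$ giving reduced fibres and hence $\pi^*z=S^\star(z)$ without multiplicities, commutation of $\lfloor\cdot\rfloor$ with $\pi^*$ because all points of a fibre share the same coefficient, and flat base change for $H^0$ in both the affine and projective cases) is the standard one and is essentially the argument of the cited reference. The only cosmetic quibble is calling $\pi$ ``\'etale'': $\operatorname{Spec}\bar{\mathbf{k}}\to\operatorname{Spec}\mathbf{k}$ is not of finite type in general, but it is a filtered limit of \'etale maps, and the properties you actually use (flatness, reducedness of fibres, uniformizers pulling back to uniformizers) all hold.
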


The proof of the following result is exactly the same as in
\cite[Lemma~1.6]{Li}.

\begin{lemme}\label{lm1.9}
  Let $A = A[C,\DD]$, where $C$ is a regular curve over $\KK$ with
  field of rational functions $K_{0}$ and $\DD = \sum_{z\in
    C}\Delta_{z}\cdot z$ is a proper $\sigma$-polyhedral
  divisor. Consider the normalization $A'$ of the cyclic extension
  $A[s\chi^{e}]$, where $e\in M$, $s^{d}\in A$ homogeneous of degree
  $de$, and $d\in\ZZ_{>0}$. If $\KK$ is algebraically closed in $A'$,
  then $A' = A[C',\DD']$ where $C'$ and $\mathfrak{D'}$ are defined by
  the following.
  \begin{enumerate}[$(i)$]
  \item If $A$ is elliptic, then $A'$ is also and $C'$ is the regular
    projective curve associated with the algebraic function field
    $K_{0}[s]$.

  \item If $A$ is non-elliptic, then $A'$ is also and $C' = \rm
    Spec\,\it A'_{\rm 0}$, where $A'_{0}$ is the normalization of
    $A_{0}$ in $K_{0}[s]$.

  \item In both cases $\DD' = \sum_{z\in C}\Delta_{z}\cdot
    \pi^{*}(z)$, where $\pi:C'\rightarrow C$ is the natural
    projection.
  \end{enumerate}
\end{lemme}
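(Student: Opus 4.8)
The plan is to recognize $A'$ as the coordinate ring of an affine $\TT$-variety of complexity one and to pin down its combinatorial data by a sandwiching argument. I would first record the formal consequences of the hypotheses. Since $s^{d}\chi^{de}\in A$ is a nonzero homogeneous element, its weight $de$ lies in $\sigma^{\vee}_{M}$, whence $e=\tfrac1d(de)\in\sigma^{\vee}$; in particular the weight cone does not change and the cone appearing in $\DD'$ is again $\sigma$. As $(s\chi^{e})^{d}=s^{d}\chi^{de}\in A$, the ring $A[s\chi^{e}]$ is finite over $A$, so its normalization $A'$ is a normal, finitely generated, $M$-graded domain that is integral over $A$. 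Passing to fraction fields gives $\fract(A')=\fract(A)(s)$, and since $s$ is $\TT$-invariant we obtain $\fract(A')^{\TT}=K_{0}(s)=:K_{0}'$, a finite extension of $K_{0}$; hence $A'$ has complexity one, and the assumption that $\KK$ is algebraically closed in $A'$ lets me apply the description theorem of Section~\ref{sec1}. Because $A'$ is integral over $A$, its degree-zero part $A'_{0}=A'\cap K_{0}'$ is integral over $A_{0}$: in the non-elliptic case $A_{0}$ is affine and $A'_{0}$ is its integral closure in $K_{0}'$, giving $C'=\spec A'_{0}$ as in $(ii)$; in the elliptic case $A_{0}=\KK$, and since $\KK$ is algebraically closed in $A'$ we get $A'_{0}=\KK$, so $A'$ is again elliptic and $C'$ is the regular projective model of $K_{0}'$ as in $(i)$. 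In either case $K_{0}\hookrightarrow K_{0}'$ induces the finite projection $\pi\colon C'\to C$.

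It remains to prove $(iii)$, that is $A'=A[C',\pi^{*}\DD]$, where I write $\pi^{*}\DD=\sum_{z}\Delta_{z}\cdot\pi^{*}(z)$, so that its evaluations are $(\pi^{*}\DD)(m)=\pi^{*}(\DD(m))$ for all $m$. I would first check that $\pi^{*}\DD$ is a proper $\sigma$-polyhedral divisor: in the non-elliptic case $C'$ is affine and there is nothing to prove, while in the elliptic case one computes $\deg\pi^{*}\DD=[K_{0}':K_{0}]\cdot\deg\DD$ using $\sum_{z'\mid z}[\kappa_{z'}:\KK]\,e_{z'/z}=[\kappa_{z}:\KK]\,[K_{0}':K_{0}]$, and then verifies that properness survives this scaling and pullback. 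By the description theorem, $B:=A[C',\pi^{*}\DD]$ is a normal, finitely generated $M$-graded domain with $\fract(B)=\fract(A)(s)=\fract(A[s\chi^{e}])$. The strategy is to establish the two inclusions $A[s\chi^{e}]\subseteq B$ and ``$B$ is integral over $A$'', which together force $B=A'$: the first inclusion gives $A'\subseteq B$ since $B$ is normal with the correct fraction field, and the integrality gives $B\subseteq A'$ since every element of $B$ is then integral over $A\subseteq A[s\chi^{e}]$.

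The inclusion $A[s\chi^{e}]\subseteq B$ reduces to $s\chi^{e}\in B$, i.e. to $\ord_{z'}(s)+e_{z'/z}\,h_{z}(e)\ge 0$ for every $z'$. This follows from the ramification relation $\ord_{z'}\!\mid_{K_{0}}=e_{z'/z}\cdot\ord_{z}$ together with $\ord_{z}(s^{d})\ge -h_{z}(de)=-d\,h_{z}(e)$, the latter holding because $s^{d}$ is a global section of $\OO_{C}(\DD(de))$ and $h_{z}$ is positively homogeneous; dividing by $d$ yields the claim. The main obstacle is the integrality of $B$ over $A$. For a homogeneous $f\chi^{m}\in B$ the function $f\in K_{0}'=K_{0}(s)$ is a root of its minimal monic polynomial over $K_{0}$, whose coefficients are, up to sign, the elementary symmetric functions of the conjugates of $f$; the key point is that each conjugate is again a section of the pulled-back divisor, so that these symmetric functions lie in the homogeneous pieces $A_{(k-i)m}$ and the resulting monic relation has coefficients in $A$. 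This is precisely the norm--trace estimate already exploited in the Galois descent statement of Lemma~\ref{lem1.8}. Once it is in place, the uniqueness part of the description theorem identifies $\DD'$ with $\pi^{*}\DD$, which completes $(iii)$.
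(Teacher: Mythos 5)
Your argument is correct and is essentially the intended one: the paper gives no proof of its own for this lemma, deferring to \cite[Lemma~1.6]{Li}, and your strategy of sandwiching the normalization between $A[s\chi^{e}]$ and the normal ring $A[C',\pi^{*}\DD]$ (inclusion of $s\chi^{e}$ via the ramification/order computation on one side, integrality of each graded piece over $A$ on the other) is exactly the mechanism of that reference. The one step to tighten is the integrality argument in positive characteristic: when $p\mid d$ the extension $K_{0}[s]/K_{0}$ need not be separable, so ``elementary symmetric functions of the conjugates'' should be replaced by the coefficients of the characteristic polynomial of multiplication by $f$ (or one splits $K_{0}[s]/K_{0}$ into a separable layer and a purely inseparable layer handled by $T^{p^{k}}-f^{p^{k}}$); this is routine, but it is precisely where the arbitrary-characteristic setting of the present paper goes beyond the characteristic-zero argument of \cite{Li}.
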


\section{Generalities on $\GA$-actions} \label{sec2}

Let $X=\spec A$ be an affine $\TT$-variety over an arbitrary field
$\KK$. In this section, we study the relation between $\GA$-actions
on $X$ that are normalized by the torus action and homogeneous locally
finite iterative higher derivations.

\begin{definition} \label{sec:sit2.1} Let $\partial =
  \{\partial^{(i)}\}_{i\in\ZZ_{\geq 0}}$ be a sequence of $\KK$-linear
  operators on $A$. We say that $\partial$ is a \emph{locally finite
    iterative higher derivation} (LFIHD for short) if it satisfies the
  following conditions:
  \begin{enumerate}[$(i)$]

  \item The operator $\partial^{(0)}$ is the identity map.

  \item For any $i\in\ZZ_{\geq 0}$ and for all $f_{1},f_{2}\in A$ we have the
    \em Leibniz rule \rm
    \[
    \partial^{(i)}(f_{1}\cdot f_{2}) = \sum_{j =
      0}^{i}\partial^{(j)}(f_{1})\cdot
    \partial^{(i-j)}(f_{2})\,.
    \]

  \item The sequence $\partial$ is locally finite, i.e. for any $f\in
    A$ there exists a positive integer $r$ such that for any $i\geq
    r$, $\partial^{(i)}(f) = 0$.

  \item For all $i,j\in\ZZ_{\geq 0}$ and for any regular function $f\in A$ we
    have
    \[
    \left(\partial^{(i)}\circ\partial^{(j)}\right) (f) =
    \binom{i+j}{i}\,\partial^{(i+j)}(f)\,.
    \]
  \end{enumerate}
  Furthermore, if $\partial$ verifies only $(i), (ii), (iv)$, we say
  that $\partial$ is a \emph{iterative higher derivation}. If
  $\partial$ verifies only $(i),(ii)$, we say $\partial$ is a
  \emph{Hasse-Schmidt derivation} (see \cite{Vo}).
\end{definition}

Consider an action
\[\phi:\GA\times X\rightarrow X\]
of the additive group $\GA$ over $\KK$.  Then the comorphism $\phi^*$
gives a sequence $\partial = \{\partial^{(i)}\}_{i\in\ZZ_{\geq 0}}$ of
$\KK$-linear operators on $A$ defined by the following way. For any
$f\in A$ we write
\[
\phi^{*}(f) = \sum_{i = 0}^{\infty}\partial^{(i)}(f)\cdot x^{i}\in
A\otimes_{\KK}\KK[x],\quad\mbox{where}\quad \KK[x] = \KK[\GA]
\]
is the polynomial algebra in one variable. An easy computation shows
that $\partial$ is an LFIHD \cite{Mi}. Conversely, given an LFIHD
$\partial$ on $A$, its \emph{exponential map}
\[
e^{x\partial} := \sum_{i = 0}^{\infty}\partial^{(i)}\,x^{i}
\]
is the comorphism of a $\GA$-action on $X = \spec A$.

\begin{remarque}
  Consider an LFIHD $\partial$ on $A$. For a positive integer $i$ we
  let
  \[
  \left(\partial^{(1)}\right )^{\circ \,i}
  = \partial^{(1)}\circ\ldots\circ\partial^{(1)}
  \]
  be the composition of $i$ copies of $\partial^{(1)}$. Denoting by
  $p$ the characteristic of the field $\KK$, we have the equality
  \[
  \partial^{(i)} =
  \frac{\left(\partial^{(1)}\right)^{\circ\,i_{0}}\circ
    \left(\partial^{(p)}\right)^{\circ\,i_{1}}\circ\ldots \circ
    \left(\partial^{(p^{r})}\right)^{\circ\,i_{r}}}
  {(i_{0})!(i_{1})!\ldots (i_{r})!}\,,
  \]
  where $i = \sum_{j = 0}^{r}i_{j}\cdot p^{j}$ is the $p$-adic
  expansion\footnote{ When $p = 0$ we make the convention that the
    $p$-adic expansion is $i = i_{0}$.} of $i$. If further $p = 0$,
  then the $\GA$-action is therefore uniquely determined by the
  locally nilpotent derivation $\partial^{(1)}$.
\end{remarque}

In characteristic zero, the algebra of invariants of a $\GA$-action on
the variety $X = \spec A$ is the kernel of the associated locally
nilpotent derivation on $A$.  The following definition describes the
arbitrary characteristic case.

\begin{definition}
  For an LFIHD $\partial$ on the algebra $A$ its \emph{kernel} is the
  subset
  \[
  \ker\partial := \left\{\, f\in A\mid \partial^{(i)}(f) = 0, \mbox{
      for all } i\in\ZZ_{>0}\right\}.
  \]
  This is the subalgebra of invariants $A^{\GA}\subseteq A$ for the
  $\GA$-action corresponding to $\partial$. The LFIHD $\partial$ is
  \emph{non-trivial} if $\ker \partial \neq A$.  A subspace
  $V\subseteq A$ is called \emph{$\partial$-invariant} if for any
  $i\in\ZZ_{\geq 0}$, we have the inclusion
  $\partial^{(i)}(V)\subseteq V$. In particular, the subspace
  $\ker\partial$ is $\partial$-invariant. For any $f\in A$ we define
  the multiplication $f\partial$ as the sequence of $\KK$-linear
  operators $f\partial=\{f^i\partial^{(i)}\}_{i\in \ZZ_{\geq 0}}$. It
  is easy to check that $f\partial$ is an LFIHD if and only if $f\in
  \ker\partial$.
\end{definition}

The next result provides some useful properties of $\GA$-actions, see
\cite[2.1, 2.2]{CM} and \cite[Example 3.5]{Cr}. 

\begin{proposition}\label{sec:LFIHD}
  For every non-trivial LFIHD $\partial$ on the algebra $A$ the
  following hold.

  \begin{enumerate}[$(a)$]

  \item The subring $\ker\partial\subseteq A$ is factorially
    closed, i.e., for all $f_{1},f_{2}\in A$ we have $f_{1}f_{2}\in
    \ker\partial \setminus\{0\}$ implies $f_{1},
    f_{2}\in\ker\partial$.
    
  \item The subring $\ker\partial$ is algebraically closed in $A$.

  \item The subring $\ker\partial$ is a subring of codimension one in
    $A$.

  \item If $\chara(\KK) = p>0$ and $A=\KK[y]$ is the polynomial ring
    in one variable, then there are some $c_{1},\ldots, c_{r}\in
    \KK^{*}$ and some integers $0\leq s_{1}<\ldots<s_{r}$ such that
    \[
    e^{x\partial}(y) = y + \sum_{i = 1}^{r}c_{i}\cdot x^{p^{s_{i}}}.
    \]
  \item If $A^{*}$ is the set of units of $A$, then $A^{*}\subseteq
    \ker\partial$ so that $A^{*} = \left(\ker\partial\right)^{*}$.

  \item A principal ideal $(f) = fA$ is $\partial$-invariant if and
    only if $f\in\ker\partial$.

  \end{enumerate}
\end{proposition}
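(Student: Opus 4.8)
The plan is to work throughout with the exponential homomorphism $e^{x\partial}=\phi^*\colon A\to A[x]$ from Definition~\ref{sec:sit2.1}; since $A$ is a domain, so is $A[x]$, and $\phi^*$ is an injective $\KK$-algebra map (composing with $x\mapsto 0$ recovers the identity). The whole proposition rests on two elementary observations about the \emph{degree function} $\deg_\partial(f):=\deg_x\phi^*(f)=\max\{i:\partial^{(i)}(f)\neq 0\}$, which is finite by local finiteness (with $\deg_\partial 0=-\infty$). First, $\deg_\partial$ is additive: comparing top-degree terms in $\phi^*(fg)=\phi^*(f)\phi^*(g)$, the coefficient of $x^{d_1+d_2}$ equals $\partial^{(d_1)}(f)\,\partial^{(d_2)}(g)$ with $d_i$ the respective degrees, and this is nonzero because $A$ is a domain, so $\deg_\partial(fg)=\deg_\partial(f)+\deg_\partial(g)$. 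Second, the leading coefficient lies in the kernel: if $d=\deg_\partial f$ and $j\geq 1$, then $\partial^{(j)}\big(\partial^{(d)}(f)\big)=\binom{j+d}{j}\partial^{(j+d)}(f)=0$ by the iterative rule, since $j+d>d$; hence $\partial^{(d)}(f)\in\ker\partial$. Note also that $1\in\ker\partial$, so $\KK\subseteq\ker\partial$.

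From these, items $(a)$, $(e)$, $(f)$ follow quickly. For $(a)$, if $f_1f_2\in\ker\partial\setminus\{0\}$ then $\deg_\partial(f_1)+\deg_\partial(f_2)=\deg_\partial(f_1f_2)=0$; as both summands are nonnegative, each vanishes, so $f_1,f_2\in\ker\partial$. For $(e)$, any unit $u$ satisfies $u\,u^{-1}=1\in\ker\partial\setminus\{0\}$, so $(a)$ gives $u\in\ker\partial$; thus $A^*\subseteq\ker\partial$, and since the inverse of such a unit is again a unit, hence in $\ker\partial$, we get $(\ker\partial)^*=A^*$. For $(f)$, the Leibniz rule yields $\partial^{(i)}(fg)=f\,\partial^{(i)}(g)\in fA$ when $f\in\ker\partial$, so the ideal $fA$ is $\partial$-invariant; conversely, if $fA$ is $\partial$-invariant and $d=\deg_\partial f\geq 1$, then the leading coefficient $\partial^{(d)}(f)$ is a nonzero element of $fA\cap\ker\partial$, so writing it as $fg_d$ and applying $(a)$ forces $f\in\ker\partial$, contradicting $d\geq 1$; hence $d=0$ and $f\in\ker\partial$.

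For $(b)$, suppose $f\in A$ satisfies a nonzero polynomial $P$ with coefficients in $\ker\partial$. Applying $\phi^*$, which fixes $\ker\partial$ pointwise, gives $P(\phi^*(f))=0$, so $\phi^*(f)\in\fract(A)[x]$ is algebraic over $\fract(A)$. Since $\fract(A)$ is algebraically closed in the polynomial ring $\fract(A)[x]$, we conclude $\deg_\partial f=0$, i.e.\ $f\in\ker\partial$. For $(c)$, the inequality $\trdeg_{\fract(\ker\partial)}\fract(A)\geq 1$ is immediate, because non-triviality provides some $r$ with $d:=\deg_\partial r\geq 1$, and such an $r$ is transcendental over $\fract(\ker\partial)$ by $(b)$. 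The reverse inequality is the main technical point: taking $r$ of minimal positive $\partial$-degree and setting $a:=\partial^{(d)}(r)\in\ker\partial\setminus\{0\}$, one inverts $a$ and runs the positive-characteristic local slice argument of \cite{CM} to show that $A_a$ is algebraic over $(\ker\partial)_a[r]$, whence $\fract(A)$ has transcendence degree one over $\fract(\ker\partial)$. I expect this to be the genuine obstacle, since---unlike in characteristic zero---one cannot integrate a single locally nilpotent derivation and must exploit the full iterative structure of $\partial$ to build the slice.

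Finally, $(d)$ is a direct computation once $\GA$-actions on $\AF^1=\spec\KK[y]$ are identified with group homomorphisms $\GA\to\Aut(\AF^1)$. Every automorphism of $\AF^1$ is affine, $y\mapsto\alpha y+\beta$, so the action takes the form $\phi^*(y)=\alpha(x)\,y+\beta(x)$ with $\alpha\colon\GA\to\GM$ and $\beta\colon\GA\to\GA$; as $\homo(\GA,\GM)$ is trivial (the additive group is unipotent while $\GM$ is a torus), we get $\alpha\equiv 1$ and $\phi^*(y)=y+\beta(x)$ with $\beta(x)\in\KK[x]$. Co-associativity of the action forces $\beta(x+x')=\beta(x)+\beta(x')$, that is, $\beta$ is an additive polynomial; expanding $(x+x')^k-x^k-x'^k$ and using that $\binom{k}{j}\equiv 0\ (\mathrm{mod}\ p)$ for all $0<j<k$ exactly when $k$ is a power of $p$, one obtains $\beta(x)=\sum_{i=1}^r c_i\,x^{p^{s_i}}$ with $c_i\in\KK^*$ and $0\leq s_1<\dots<s_r$, which is the asserted form of $e^{x\partial}(y)$.
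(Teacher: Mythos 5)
Your proof is correct and follows essentially the same route as the paper: both rest on the degree function $f\mapsto\deg_x e^{x\partial}(f)$ (additivity of the degree and the fact that the top coefficient lies in $\ker\partial$) to get $(a)$, $(b)$, $(e)$, $(f)$, and both defer the only genuinely hard step --- the inequality $\trdeg_{\fract(\ker\partial)}\fract(A)\leq 1$ in $(c)$ --- to the Makar-Limanov/Crachiola argument of \cite{CM}. The only real difference is that you prove $(d)$ directly via additive ($p$-)polynomials and the structure of $\Aut(\AF^1)$, where the paper simply cites \cite{Cr}; that argument is standard and correct.
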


\begin{proof}
  Assertions $(a), (b)$ and $(c)$ are obtained by using the degree
  function
  \[
  A\setminus \{0\}\rightarrow \ZZ_{\geq 0},\,\,\,f\mapsto
  \deg_{x}e^{x\partial}(f)\,.
  \]
  In particular, we remark that $(b)$ implies that the ring
  $\ker\partial$ is normal whenever $A$ is normal. Assertion $(d)$ is
  proven in \cite[Example 3.5]{Cr}. Assertion $(e)$ is an easy
  consequence of $(a)$.

  Using arguments from \cite[$2$, $1.2\,\it (b)$]{FZ} we give a short
  proof of $(f)$.  Assume that $f$ is nonzero. By
  Definition~\ref{sec:sit2.1}~$(iii)$ we can consider $d\in\ZZ_{\geq0}$
  such that $f':= \partial^{(d)}(f)\neq 0$ and belongs to
  $\ker\partial$.  If the ideal $(f)$ is $\partial$-invariant, then
  $f'\in\ker\partial\cap (f)$ so that $f' = af$ for some $a\in A$.  By
  Proposition~\ref{sec:LFIHD}~$(a)$ we obtain $f\in\ker\partial$.
  Conversely, let $a'\in A$. By Definition~\ref{sec:sit2.1}~$(ii)$,
  for any $i\in\ZZ_{\geq 0}$ we have $\partial^{(i)}(a'f)
  = \partial^{(i)}(a')f$ and so the ideal $(f)$ is
  $\partial$-invariant.
\end{proof}

In the next lemma, we study the extensions of LFIHDs on the algebra
$A$ to the localization ring $T^{-1}A$ given by a multiplicative
system $T\subseteq A$. We were inspired by well-known computations
with the Hasse-Teichm\"uller derivatives
(cf. \cite[Section~2]{JKS}). For this lemma, we let
\[
E(i,j) = \left\{(s_{1},\ldots, s_{j})\in\ZZ^{j}_{>0}\mid \sum_{\ell =
    1}^{j}s_{\ell} = i\right\} \quad \mbox{for all integers }
i,j\in\ZZ_{>0},\mbox{ such that }j\leq i\,.
\]

\begin{lemme} \label{sec:mult-syst}
  Let $T$ be a subset of $A$ stable under multiplication such that
  $0\not\in T$ and $1\in T$.
  \begin{enumerate}[$(i)$]
  \item If $\partial$ be an iterative higher derivation on the algebra
    $A$, then $\partial$ extends to a unique iterative higher
    derivation $\bar{\partial} = \{\bar{\partial}^{(i)}\}_{i\in\ZZ_{\geq 0}}$
    on the algebra $T^{-1}A$ given by
    \[
    \bar{\partial}^{(i)}\left(\frac{1}{f}\right) = \sum_{j =
      1}^{i}\frac{(-1)^{j}}{f^{j+1}} \sum_{(s_{1},\ldots,s_{j})\in
      E(i,j)}
    \partial^{(s_{1})}(f)\ldots\partial^{(s_{j})}(f)
    \]
    for all $f\in T$ and all $i\in\ZZ_{>0}$.

  \item Furthermore, if $\partial$ is an LFIHD on $A$ and if $T\subseteq
    \ker\partial$, then the extension $\bar{\partial}$ on $T^{-1}A$ is
    an LFIHD.
  \end{enumerate}
\end{lemme}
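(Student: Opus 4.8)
The plan is to prove part $(i)$ first, establishing existence and uniqueness of the extension on reciprocals $1/f$ for $f\in T$, and then bootstrap to part $(ii)$ by checking local finiteness and the iteration identity under the stronger hypothesis $T\subseteq\ker\partial$. For uniqueness in $(i)$, observe that any extension $\bar\partial$ satisfying the Leibniz rule is forced: since $A$ generates $T^{-1}A$ via the reciprocals of elements of $T$, and since each $\bar\partial^{(i)}$ must be $\KK$-linear and multiplicative in the Leibniz sense, the value on an arbitrary fraction $a/f$ is determined once the values $\bar\partial^{(i)}(1/f)$ are determined. So the crux is to pin down $\bar\partial^{(i)}(1/f)$. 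The natural idea is to apply $\bar\partial$ to the identity $f\cdot(1/f)=1$ and solve recursively. Applying the Leibniz rule (axiom $(ii)$) to $f\cdot(1/f)=1$ gives, for $i>0$,
\[
0=\sum_{j=0}^{i}\partial^{(j)}(f)\cdot\bar\partial^{(i-j)}\!\left(\tfrac1f\right)
= f\cdot\bar\partial^{(i)}\!\left(\tfrac1f\right)+\sum_{j=1}^{i}\partial^{(j)}(f)\cdot\bar\partial^{(i-j)}\!\left(\tfrac1f\right),
\]
which expresses $\bar\partial^{(i)}(1/f)$ in terms of lower-order values. This both proves uniqueness and gives a recursion; the stated closed formula with the sets $E(i,j)$ is then verified by induction on $i$, expanding the recursion and bookkeeping the compositions of order partitions. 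I expect this induction to be the main routine obstacle: the closed form is a combinatorial resummation of the recursion, and one must carefully match the sign $(-1)^j$, the power $f^{-(j+1)}$, and the sum over compositions $E(i,j)$ against what the recursion produces at each step.

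Having produced candidate operators $\bar\partial^{(i)}$ on $T^{-1}A$, I would then verify that $\bar\partial$ is genuinely an iterative higher derivation, i.e.\ that it satisfies axioms $(i)$, $(ii)$, and $(iv)$ of Definition~\ref{sec:sit2.1} on all of $T^{-1}A$, not merely on $A$ and the reciprocals. Axiom $(i)$ is immediate since $\bar\partial^{(0)}$ restricts to the identity. For the Leibniz rule $(ii)$, it suffices to check it on products of generators; since it holds on $A$ by hypothesis and by construction on the reciprocals, a density/generation argument extends it to all of $T^{-1}A$. The iteration identity $(iv)$ is the more delicate one: one checks $(\bar\partial^{(i)}\circ\bar\partial^{(j)})=\binom{i+j}{i}\bar\partial^{(i+j)}$ by verifying it on the generating set and invoking that both sides are differential operators compatible with the Leibniz rule, so agreement on generators forces agreement everywhere. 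Here the Hasse-Schmidt formalism and the computations with Hasse-Teichm\"uller derivatives cited from \cite[Section~2]{JKS} provide the template; one can alternatively reduce $(iv)$ to a formal identity satisfied by the exponential $e^{x\bar\partial}=\sum_i\bar\partial^{(i)}x^i$, namely that it is a ring homomorphism $T^{-1}A\to T^{-1}A[[x]]$, which makes the iteration property automatic.

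For part $(ii)$, the additional hypothesis $T\subseteq\ker\partial$ simplifies everything dramatically. When $f\in\ker\partial$ we have $\partial^{(s)}(f)=0$ for all $s>0$, so every term in the closed formula for $\bar\partial^{(i)}(1/f)$ with $i>0$ vanishes; thus $\bar\partial^{(i)}(1/f)=0$ for $i>0$, meaning $1/f\in\ker\bar\partial$ as well. Consequently the extension acts on a fraction $a/f$ simply by $\bar\partial^{(i)}(a/f)=\partial^{(i)}(a)/f$, and local finiteness of $\bar\partial$ (axiom $(iii)$) follows at once from local finiteness of $\partial$ on $A$: for any $a/f\in T^{-1}A$, the integer $r$ that works for $a$ also works for $a/f$. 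This upgrades the iterative higher derivation of part $(i)$ to an LFIHD, completing the proof. The main genuine work is therefore concentrated in the inductive verification of the closed formula in $(i)$ and the extension of axiom $(iv)$ to the localization; part $(ii)$ is then essentially a corollary obtained by specializing to $T\subseteq\ker\partial$.
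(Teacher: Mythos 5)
Your proposal is correct and follows essentially the same route as the paper, which obtains the formula for $\bar{\partial}^{(i)}(1/f)$ by induction from the Leibniz rule applied to $f\cdot(1/f)=1$ and defers existence and uniqueness of the extension to the cited references of Maurischat and Vojta; your part $(ii)$ argument (all terms vanish when $T\subseteq\ker\partial$, so $\bar{\partial}^{(i)}(a/f)=\partial^{(i)}(a)/f$ and local finiteness is inherited) is exactly the ``straightforward'' step the paper omits. One small caution: your parenthetical claim that $e^{x\bar{\partial}}$ being a ring homomorphism ``makes the iteration property automatic'' is inaccurate --- that only encodes axioms $(i)$--$(ii)$ of Definition~\ref{sec:sit2.1} --- but iterativity does follow cleanly from the identity $e^{y\bar{\partial}}\circ e^{x\bar{\partial}}=e^{(x+y)\bar{\partial}}$, which holds on $A$ and propagates to $T^{-1}A$ by uniqueness of ring-homomorphism extensions to a localization, in line with your primary ``check on generators'' argument.
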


\begin{proof}
  The existence and the uniqueness of $\bar{\partial}$ is given in
  \cite[$3.7$, $5.8$]{Ma}, \cite[Section~3]{Vo}. Proceeding by
  induction the computation of $\bar{\partial}^{(i)}(\frac{1}{f})$ is
  an easy consequence of Definition~\ref{sec:sit2.1}~$(ii)$. The rest
  of the proof is straightforward.
\end{proof}

As a consequence of the previous lemma, we obtain a result on
equivariant cyclic coverings of an affine variety with a $\GA$-action
(see also \cite[Lemma $1.8$]{FZ2}).

\begin{corollaire} \label{cor:2.6}
  Let $K = \fract A$. Consider an LFIHD $\partial$ on $A$ and let
  $f\in \ker\partial$ be a nonzero element. Let $d\in\ZZ_{>0}$ be an
  integer and let $u$ be an algebraic element over $K$ satisfying
  $u^{d}-f = 0$.  If $B$ is the integral closure of $A[u]$ in its
  field of fractions, then $\partial$ extends to a unique LFIHD
  $\partial'$ on the algebra $B$ such that $u\in\ker\partial'$.
\end{corollaire}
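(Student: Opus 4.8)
The plan is to construct $\partial'$ in three stages: adjoin a formal $d$-th root as an invariant, descend to the hypersurface $u^{d}=f$, and then pass to the normalization. I begin with uniqueness. If $\partial'$ is any LFIHD on $B$ with $\partial'|_{A}=\partial$ and $u\in\ker\partial'$, then $\partial'^{(i)}(u^{j})=0$ for $i>0$, so the Leibniz rule forces $\partial'^{(i)}(au^{j})=\partial^{(i)}(a)\,u^{j}$ for every $a\in A$; hence $\partial'$ is already determined on $A[u]$. Since $B\subseteq\fract(A[u])=\fract B$, the uniqueness part of the extension to a localization (Lemma~\ref{sec:mult-syst}~$(i)$) then pins down $\partial'$ on all of $B$.

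For existence I would first produce an LFIHD on $A[u]$. We may assume $\partial$ is non-trivial, the trivial case being immediate. On the polynomial ring $A[y]$ define the base extension $\tilde\partial^{(i)}\big(\sum_{j}a_{j}y^{j}\big)=\sum_{j}\partial^{(i)}(a_{j})\,y^{j}$, i.e. $\partial$ acting on coefficients with $y$ treated as a constant. A direct check gives that $\tilde\partial$ is an LFIHD on $A[y]$ with $y\in\ker\tilde\partial$, local finiteness being inherited coefficientwise. As $f\in\ker\partial$ and $\ker\tilde\partial$ is a subalgebra containing $y$, the element $y^{d}-f$ lies in $\ker\tilde\partial$; by Proposition~\ref{sec:LFIHD}~$(f)$ the principal ideal $(y^{d}-f)$ is $\tilde\partial$-invariant, so $\tilde\partial$ descends to an LFIHD $\partial_{R}$ on $R:=A[y]/(y^{d}-f)$, with the image of $y$ in its kernel.

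The main obstacle is passing from $R$ to $A[u]$ when $y^{d}-f$ is reducible over $K=\fract A$. The surjection $R\twoheadrightarrow A[u]$, $y\mapsto u$, has kernel a minimal prime $\mathfrak{p}$, since its target is the domain $A[u]\subseteq K(u)$. I claim $\mathfrak{p}$ is $\partial_{R}$-invariant. Geometrically $\partial_{R}$ is the comorphism of a $\GA$-action on $\spec R\subseteq X\times\AF^{1}$, where $X=\spec A$; as $\GA$ is connected the image of the irreducible set $\GA\times V(\mathfrak{p})$ under the action is irreducible and contains the component $V(\mathfrak{p})$, hence equals it, so $V(\mathfrak{p})$ and therefore $\mathfrak{p}$ is stable. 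Consequently $\partial_{R}$ descends to an LFIHD $\partial_{1}$ on $A[u]$ with $u\in\ker\partial_{1}$. When $y^{d}-f$ is irreducible the point is vacuous, as then $\mathfrak{p}=0$ and $A[u]=R$.

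Finally I would reach $B$ via functoriality of normalization. Write $L=\fract(A[u])$. Since $B$ is a normal domain with $\fract B=L$, the polynomial ring $B[y]$ is normal with $\fract\big(B[y]\big)=L(y)$ and is integral over $A[u][y]$, so $B[y]$ is exactly the integral closure of $A[u][y]$ in $L(y)$; thus $\GA\times\spec B\to\GA\times\spec A[u]$ is the normalization. The $\GA$-action attached to $\partial_{1}$, precomposed with this normalization, is a dominant morphism from the normal integral scheme $\GA\times\spec B$ to $\spec A[u]$, and hence factors uniquely through $\spec B$ by the universal property of normalization, yielding a lifted $\GA$-action on $\spec B$. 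Its comorphism is the required LFIHD $\partial'$ on $B$; it restricts to $\partial_{1}$ on $A[u]$ and keeps $u$ in its kernel, and it agrees with the unique candidate of the first paragraph. I expect the only delicate points to be the invariance of $\mathfrak{p}$ and the inheritance of local finiteness on $B$, both of which are controlled by the connectedness of $\GA$ and the normality of $B$.
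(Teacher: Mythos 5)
Your proof is correct, but it takes a genuinely different route from the paper at the crucial step. The paper also begins by adjoining a variable killed by the derivation, but it works over $K=\fract A$: it lets $P\in K[t]$ be the minimal polynomial of $u$, writes $t^{d}-f=FP$ with both factors monic, uses integral closedness of $A$ to place $F,P\in A[t]$, and then invokes factorial closedness of the kernel (Proposition~\ref{sec:LFIHD}~$(a)$) applied to $FP=t^d-f\in\ker\bar\partial$ to conclude $P\in\ker\bar\partial$; the ideal $(P)$ is then invariant and the derivation descends to $K[t]/(P)=K[u]$, hence to $A[u]$ and $B$. You instead quotient by $(y^{d}-f)$ first, accept that $R=A[y]/(y^{d}-f)$ may fail to be a domain, and stabilize the relevant minimal prime by the geometric fact that an action of the connected group $\GA$ preserves each irreducible component. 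Both arguments are sound; the paper's stays entirely inside its algebraic toolkit and never leaves the category of domains, while yours avoids the Gauss-lemma-style factorization at the cost of the component argument. Two small points in your write-up deserve a line each: you justify only that $\mathfrak{p}=\ker(R\twoheadrightarrow A[u])$ is prime, but the component argument needs it to be \emph{minimal} (this is true, by incomparability for the integral extension $A\subseteq R$, since $\mathfrak{p}\cap A=0$); and for the irreducibility of $\GA\times V(\mathfrak{p})$ over a general field one should note that $\GA$ is geometrically irreducible. Your treatment of the normalization step via the universal property is more detailed than the paper's, which declares it straightforward, and your uniqueness argument matches what the paper implicitly relies on through Lemma~\ref{sec:mult-syst}.
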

\begin{proof}
  By Lemma~\ref{sec:mult-syst} we can extend the LFIHD $\partial$ on
  $A$ to an iterative higher derivation on the field $K$, and on the
  polynomial ring $K[t]$ by letting $\bar{\partial}^{(i)}(t) = 0$ for
  any $i\geq 1$. Consider the morphism of $K$-algebras
  $\phi:K[t]\rightarrow K[u]$, $t\mapsto u$. Let $P\in K[t]$ be the
  monic polynomial generating the ideal $\ker\phi$.

  We can write $t^{d}-f = FP$, for some $F\in K[t]$.  Remark that $F$
  is monic since $P$ and $t^d-f$ are monic.  Since $A$ is integrally
  closed, we obtain $F,P\in A[t]$. Furthermore, for any $i\in\ZZ_{>0}$
  we have $\bar{\partial}^{(i)}(FP) = \bar{\partial}^{(i)}(t^{d}-f) =
  0$. Note that $A[t]$ is $\bar{\partial}$-invariant and the
  restriction of $\bar{\partial}$ to $A[t]$ is an LFIHD. Therefore, by
  Proposition~\ref{sec:LFIHD}~$(a)$, we have $P\in A[t]\cap \ker
  \bar{\partial}$ defining an iterative higher derivation $\partial'$
  on $K[u]$. Clearly, the normalization $B$ of the ring $A[u]$ is
  again $\partial'$-invariant. The rest of the proof is
  straightforward and we omitted it.
\end{proof} 

In the sequel, we let
\[
A=\bigoplus_{m\in\sigma^{\vee}_{M}}A_{m}\chi^{m}\subseteq K_{0}[M]
\]
as in Section~\ref{sec1}, where $\chi^m$ is also seen as the character
of the split torus $\TT$ corresponding to the lattice vector $m\in
M$. Let us introduce the notion of homogeneous iterative higher
derivations.
\begin{definition}
  Let $\partial$ be an iterative higher derivation. The sequence
  $\partial$ is \emph{homogeneous} if there exists $e\in M$ such that
  \[
  \partial^{(i)}(A_{m}\chi^{m})\subseteq
  A_{m+ie}\chi^{m+ie}\quad\mbox{for all}\quad i\in\ZZ_{\geq 0} \mbox{ and }
  m\in M\,.
  \]
  If $\partial$ is non-trivial, then the vector $e$ is called the \em
  degree \rm of $\partial$ and is denoted by $\rm
  deg\,\it \partial$. For the case where $\KK$ is of characteristic
  $p>0$ we have the more general definition.  Given $r\in\ZZ_{\geq 0}$
  we say that $\partial$ is \emph{rationally homogeneous} of degree
  $e/p^{r}$ (or of bidegree $(e,p^r)$ if we need to emphasize the
  vector $e$) if it satisfies the following.
  \begin{enumerate}[$(i)$]

  \item $\partial^{(ip^r)}(A_{m}\chi^{m})\subseteq A_{m+ie}\chi^{m+ie}$,
    for all $i\in\ZZ_{\geq 0}$, and $m\in M$.

  \item $\partial^{(j)} = 0$ whenever $p^{r}$ does not divide $j$.
  \end{enumerate}
\end{definition}

In \cite[Section~1.2]{Li} it is shown that a usual derivation on a
multigraded algebra which sends graded pieces into graded pieces is
homogeneous.  However this does not hold for higher derivations.  Note
also that the kernel of a homogeneous LFIHD $\partial$ on $A$ is an
$M$-graded subalgebra of $A$.  In the sequel, we introduce some
notation in order to have a geometrical interpretation of homogeneous
and rationally homogeneous LFIHDs in the case where $\KK$ is an
algebraically closed field\footnote{Note that the
  Notation~\ref{sec:semidirect} and Proposition~\ref{sec:norm-graded}
  can be generalized in the setting of group schemes and of Hopf
  algebras when $\KK$ is arbitrary.}.

\begin{notation} \label{sec:semidirect} Assume that $\KK$ is
  algebraically closed.  Letting $e\in M$ be a vector we denote by
  $G_{e}$ the group whose underlying set is $\TT\times \GA$ and
  multiplication law is defined by
  \[
  (t_{1},\alpha_{1})\cdot (t_{2},\alpha_{2}) = (t_{1}\cdot t_{2},
  \chi^{-e}(t_{2})\cdot \alpha_{1} + \alpha_{2}),
  \]
  where $t_{i}\in\TT$ and $\alpha_{i}\in\GA$.  Actually, every
  semidirect product of $\TT\ltimes\GA$ given by a character
  $\TT\rightarrow \rm Aut\,\it\GA\simeq \GM$ is isomorphic to some
  $G_{e}$.
\end{notation}

The following proposition is similar to \cite[Lemma~2.2]{FZ2}.  For
the convenience of the reader we give a short proof.

\begin{proposition}\label{sec:norm-graded}
  Assume that the field $\KK$ is algebraically closed.
  \begin{enumerate}[$(i)$]

  \item If $A$ is $M$-graded and $\partial$ is a homogeneous LFIHD on
    $A$ of degree $e$, then the corresponding $\GA$-action is
    normalized by the $\TT$-action. This means that the actions of the
    torus and the additive group induce a $G_{e}$-action with
    comorphism given by
    \[
    \psi^{*}(t,\alpha) = t\cdot e^{\alpha\partial}(f),
    \]
    where $(t,\alpha)\in G_{e}$ and $f\in A$.

  \item Conversely, if $G_{e}$ acts on $X = \spec A$, then the actions
    of the subgroups $\TT$ and $\GA$ give an $M$-grading on $A$ and a
    homogeneous LFIHD of degree $e$.

  \item Assume further that $\chara(\KK) = p\rm >0$. Let
    $F_{p^{r}}:\GA\rightarrow\GA$, $t\mapsto t^{p^{r}}$ be the
    Frobenius map. Giving a rationally homogeneous LFIHD
    $\partial$ on $A$ of degree $e/p^{r}$ is equivalent to having a
    $\GA$-action on $X$ equal to $\phi\circ
    (F_{p^{r}},\operatorname{id}_{X})$, where $\phi$ is a $\GA$-action
    normalized by $\TT$.
  \end{enumerate}
\end{proposition}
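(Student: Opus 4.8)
The plan is to prove the three equivalences by translating the algebraic conditions on $\partial$ into geometric conditions on the corresponding group action, using the dictionary between LFIHDs and $\GA$-actions established earlier in this section. For part $(i)$, I would start from a homogeneous LFIHD $\partial$ of degree $e$ and verify that the prescribed formula $\psi^*(t,\alpha)(f) = t\cdot e^{\alpha\partial}(f)$ defines a genuine $G_e$-action. The key computation is to check that $\psi^*$ respects the multiplication law of $G_e$, i.e. that applying $(t_1,\alpha_1)\cdot(t_2,\alpha_2)$ gives the same comorphism as composing the individual actions. The homogeneity condition $\partial^{(i)}(A_m\chi^m)\subseteq A_{m+ie}\chi^{m+ie}$ is exactly what makes the torus twist by the character $\chi^{-e}$ appear in the semidirect product: when one pushes the torus action through $e^{\alpha\partial}$, each operator $\partial^{(i)}$ shifts the weight by $ie$, producing the factor $\chi^{-e}(t_2)$ in the additive coordinate. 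I would spell out this single identity on homogeneous elements $f=a\chi^m$ and note that linearity extends it to all of $A$.

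For part $(ii)$, the direction is reversed: given a $G_e$-action on $X$, restricting to the subgroups $\TT=\TT\times\{0\}$ and $\GA=\{1\}\times\GA$ yields a torus action (hence an $M$-grading on $A$) and a $\GA$-action (hence an LFIHD $\partial$). The content is that $\partial$ is \emph{homogeneous} of degree $e$, and this follows by reading the conjugation relation inside $G_e$: the commutation law forces $t\cdot\partial^{(i)}\cdot t^{-1}$ to act as $\chi^{ie}(t)\,\partial^{(i)}$ on the graded pieces, which is precisely the homogeneity statement. I would extract this by differentiating or by directly comparing the two comorphisms coming from the relation $(t,0)\cdot(1,\alpha) = (1,\chi^{-e}(t)\alpha)\cdot(t,0)$ in $G_e$.

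Part $(iii)$ is where the Frobenius enters, and I expect the main subtlety to lie in matching the combinatorial definition of \emph{rationally homogeneous} of degree $e/p^r$ with the geometric operation $\phi\circ(F_{p^r},\mathrm{id}_X)$. The idea is that precomposing a $\GA$-action $\phi$ with the Frobenius $F_{p^r}\colon t\mapsto t^{p^r}$ replaces the variable $x$ in the comorphism $\phi^*(f)=\sum_j \partial_\phi^{(j)}(f)\,x^j$ by $x^{p^r}$, so the new action has $\partial^{(ip^r)} = \partial_\phi^{(i)}$ and $\partial^{(j)}=0$ whenever $p^r\nmid j$. This reproduces condition $(ii)$ of the rational homogeneity definition automatically, while condition $(i)$ says exactly that the underlying action $\phi$ is homogeneous of degree $e$ in the sense of parts $(i)$--$(ii)$, i.e. normalized by $\TT$. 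I would therefore establish the bijection by the assignment $\partial\leftrightarrow\partial_\phi$ given by reindexing the operators through $j\mapsto j p^r$, checking that $\partial$ is a (rationally homogeneous, degree $e/p^r$) LFIHD if and only if $\partial_\phi$ is a (homogeneous, degree $e$) LFIHD, and then invoking parts $(i)$ and $(ii)$ to identify the latter with a $\TT$-normalized $\GA$-action $\phi$. The hard part is verifying that the reindexed sequence $\partial_\phi$ genuinely satisfies the LFIHD axioms—particularly the Leibniz rule $(ii)$ and the iterativity relation $(iv)$ of Definition~\ref{sec:sit2.1}—since the binomial coefficients $\binom{ip^r+jp^r}{ip^r}$ must reduce modulo $p$ to $\binom{i+j}{i}$; this is a consequence of Lucas' theorem, and I would isolate it as the key arithmetic lemma underpinning the correspondence.
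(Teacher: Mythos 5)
Your proposal follows essentially the same route as the paper: part $(i)$ by verifying the $G_e$-cocycle identity from the weight-shift relation $t\cdot\partial^{(i)}(f)=\chi^{ie}(t)\,\partial^{(i)}(t\cdot f)$, part $(ii)$ by reading homogeneity off the commutation relation in $G_e$ (note only that the correct relation is $(t,0)\cdot(1,\alpha)=(1,\chi^{e}(t)\alpha)\cdot(t,0)$, not $\chi^{-e}(t)\alpha$), and part $(iii)$ by reindexing $x\mapsto x^{p^r}$. Your explicit appeal to Lucas' theorem for $\binom{(i+j)p^r}{ip^r}\equiv\binom{i+j}{i}\pmod p$ correctly supplies the detail behind the paper's remark that $(iii)$ ``follows immediately'' from $(i)$ and $(ii)$.
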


\begin{proof}
  $(i)$ Given $(t,\alpha)\in G_{e}$ and $f\in A$, by homogeneity of
  $\partial$ we have
  \begin{align} \label{eq:1}
    t\cdot\partial^{(i)}(f) =
    \chi^{ie}(t)\,\partial^{(i)}(t\cdot f),\,\,\forall i\in\ZZ_{\geq 0}. 
  \end{align}
  This gives
  \[
  t\cdot e^{\alpha\partial}(f) = \sum_{i =
    0}^{\infty}\chi^{ie}(t)\alpha^{i} \,\partial^{(i)}(t\cdot f) =
  e^{\chi^{e}(t)\alpha\partial}(t\cdot f).
  \]
  Hence for all $(t_{1},\alpha_{1}), (t_{2},\alpha_{2})\in G_{e}$ we
  obtain
  \[
  \psi^*((t_{1},\alpha_{1})\cdot (t_{2},\alpha_{2}))(f) =
  e^{\chi^{e}(t_{1})\alpha_{1}
    \partial}\circ
  e^{\chi^{e}(t_{1}t_{2})\alpha_{2}\partial}(t_{1}t_{2}\cdot f) =
  \psi^*(t_{1},\alpha_{1})(\psi^*(t_{2},\alpha_{2})(f)).
  \]
  We conclude that $\psi^*$ defines a $G_{e}$-action on the variety
  $X = \spec A$.

$(ii)$ The action of the subgroup $\GA\subseteq G_{e}$ yields
an LFIHD $\partial$ on the algebra $A$.
For  $\alpha\in \GA$ and $f\in A$ we have 
$\psi^*(1,\alpha)(f) = e^{\alpha\partial}(f)$.
So for any $t\in\TT$ we have 
\[
t\cdot e^{\alpha\partial}(f) = 
\psi^*((1,\chi^{e}(t)\alpha)\cdot (t,0)) (f) = 
e^{\chi^{e}(t)\alpha\partial}(t\cdot f).
\]
Identifying the coefficients we obtain \eqref{eq:1}.  Thus the LFIHD
$\partial$ is homogeneous for the $M$-grading given by the action of
the subgroup $\TT\subseteq G_{e}$.

Assertion $(iii)$ follows immediately from $ (i)$ and $(ii)$.
\end{proof}

For an arbitrary field $\KK$ we consider the following natural definition.

\begin{definition}
  Assume that the torus $\TT$ acts on $X = \spec A$.  A $\GA$-action
  on $X$ is \emph{normalized} (resp.  \emph{normalized up to a
    Frobenius map}) by the $\TT$-action if the corresponding LFIHD
  $\partial$ is homogeneous (resp. rationally homogeneous).
\end{definition}

To classify normalized $\GA$-action it is convenient to
separate them into two types (see \cite[3.11]{FZ2} and
\cite[Lemma~1.11]{Li} for special cases).

\begin{definition}
  A homogeneous LFIHD $\partial$ is of \emph{vertical type} (or of
  fiber type) if $\bar{\partial}^{(i)}(K_{0}) = \{0\}$ for any
  $i\in\ZZ_{>0}$.  Otherwise $\partial$ is of \emph{horizontal
    type}. We use similar terminology for normalized $\GA$-actions. An
  affine $\TT$-variety endowed with a non-trivial vertical
  (resp. horizontal) $\GA$-action is called \emph{vertical}
  (resp. \emph{horizontal}).
\end{definition}

A homogeneous LFIHD of horizontal type is automatically non-trivial.
In the vertical case, one can extend a homogeneous LFIHD on $A$ to an
LFIHD on the semigroup algebra $K_{0}[\sigma^{\vee}_{M}]$.

\begin{lemme} \label{sec:lem2.12}
  Let $\partial$ be a homogeneous LFIHD of vertical type on the
  $M$-graded algebra $A$.  Then $\partial$ extends to a unique
  homogeneous locally finite iterative higher $K_{0}$-derivation on
  the semigroup algebra $K_{0}[\sigma^{\vee}_{M}]$.
\end{lemme}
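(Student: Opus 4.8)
The plan is to extend $\partial$ in two stages: first to the $K_0$-subalgebra of $K:=\fract A$ generated by $A$, and then to its normalization, which I will identify with $K_0[\sigma^{\vee}_M]$.

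\emph{Stage 1.} By Lemma~\ref{sec:mult-syst}~$(i)$ the iterative higher derivation $\partial$ extends uniquely to an iterative higher derivation $\bar\partial$ on $K=\fract A$. Since the formula for $\bar\partial^{(i)}(1/h)$ is built out of the $\partial^{(s)}(h)$, and $h$ homogeneous of degree $d$ forces $1/h$ to be homogeneous of degree $-d$, the derivation $\bar\partial$ is again homogeneous of degree $e$. Because $\partial$ is of vertical type we have $K_0\subseteq\ker\bar\partial$, so $\bar\partial$ is $K_0$-linear. Put $S=\{m\in\sigma^{\vee}_M\mid A_m\neq 0\}$ and let $A'=\bigoplus_{m\in S}K_0\chi^m$, the $K_0$-subalgebra of $K$ generated by $A$. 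For $m\in S$ I write $\chi^m=a^{-1}(a\chi^m)$ with $0\neq a\in A_m\subseteq K_0$, which gives $\bar\partial^{(i)}(\chi^m)=a^{-1}\partial^{(i)}(a\chi^m)\in K_0\chi^{m+ie}$; this lies in $A'$ for every $i$ and vanishes for $i\gg 0$ since $a\chi^m\in A$ and $\partial$ is an LFIHD. Hence $\bar\partial$ restricts to a $K_0$-linear homogeneous LFIHD of degree $e$ on $A'$.

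\emph{Stage 2.} The support $S$ is a subsemigroup of $\sigma^{\vee}_M$ whose generated cone is the weight cone $\sigma^{\vee}$ and whose generated group is all of $M$, by effectiveness of the $\TT$-action. Consequently the saturation of $S$ in $M$ equals $\sigma^{\vee}_M$, so that $K_0[\sigma^{\vee}_M]$ is precisely the normalization of $A'=K_0[S]$ inside $K$. The idea is then to lift the $\GA$-action on $\spec A'$ with comorphism $e^{x\bar\partial}$ to the normalization $\spec K_0[\sigma^{\vee}_M]$, arguing exactly as in Corollary~\ref{cor:2.6}: applying $e^{x\bar\partial}$ to a monic integral equation of $\xi\in K_0[\sigma^{\vee}_M]$ over $A'$ shows that $e^{x\bar\partial}(\xi)$ is integral over $A'[x]$, whence every coefficient $\bar\partial^{(i)}(\xi)$ lies in $K_0[\sigma^{\vee}_M]$ and vanishes for $i\gg 0$. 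This produces a locally finite iterative higher derivation on $K_0[\sigma^{\vee}_M]$ extending $\bar\partial|_{A'}$.

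\emph{Conclusion and obstacle.} The resulting derivation is homogeneous of degree $e$, being the restriction of the graded $\bar\partial$ on $K$, and it is a $K_0$-derivation since $K_0\subseteq\ker\bar\partial$. Uniqueness is immediate: any homogeneous iterative higher $K_0$-derivation on $K_0[\sigma^{\vee}_M]$ extending $\partial$ must agree with $\bar\partial$ on $A$, hence on $K=\fract A$ by the uniqueness clause of Lemma~\ref{sec:mult-syst}~$(i)$, hence on $K_0[\sigma^{\vee}_M]\subseteq K$. I expect the only real difficulty to be Stage~2, namely checking that the extension to the normalization remains locally finite and stabilizes $K_0[\sigma^{\vee}_M]$ — equivalently, that the vertical $\GA$-action on $\spec A'$ lifts to the normalization; everything else is bookkeeping with the grading and with the vertical-type hypothesis.
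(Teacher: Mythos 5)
Your argument is correct and follows essentially the same route as the paper: extend $\partial$ via Lemma~\ref{sec:mult-syst} to a localization, use the vertical hypothesis to get $K_0$-linearity, observe that $K_0[S]$ (your $A'$, the paper's $B=A\otimes_\KK K_0$) is invariant and carries a locally finite restriction, and finally pass to the integral closure $K_0[\sigma^\vee_M]$. The only difference is cosmetic: you spell out the normalization step with the Corollary~\ref{cor:2.6}-style integral-equation argument, which the paper's proof leaves implicit.
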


\begin{proof}
  By Lemma~\ref{sec:mult-syst}, the LFIHD $\partial$ extends to an
  iterative higher derivation $\partial'$ on $K_{0}[M]$. Since
  $\partial$ is of vertical type, Definition~\ref{sec:sit2.1}~$(ii)$
  implies that each $\partial'^{(i)}$ is $K_{0}$-linear. Consequently,
  if $S\subseteq M$ is the subsemigroup of weights of the $M$-graded
  algebra $A$, then $B:= K_{0}[S] = A\otimes_{\KK}K_{0}$ is
  $\partial'$-invariant.

  Let us show that $\partial'|_{B}$ is an LFIHD on $B$.  Let
  $f\chi^{m}\in B$ be a homogeneous element with $f\in K_{0}^{*}$.
  Write $f\chi^{m} = f'h\chi^{m}$ for some $f'\in K_{0}$ and for some
  $h\in A_{m}$. There exists $r\in\ZZ_{>0}$ such that for any $i\geq
  r$,
  \[
  \partial'^{(i)}(f\chi^{m}) = f'\partial^{(i)}(h\chi^{m}) = 0.
  \]
  Since every element of $B$ is a sum of homogeneous elements we
  conclude that $\partial'|_{B}$ is a locally finite iterative higher
  $K_{0}$-derivation on $B$. Thus, $\partial'|_{B}$ extends to an
  LFIHD on the integral closure $\bar{B} = K_{0}[\sigma^{\vee}_{M}]$.
\end{proof}

In the next lemma, we prove an elementary result concerning the LFIHDs
of the polynomial algebra in one variable. It will be useful in order
to study horizontal $\GA$-actions in Section~\ref{sec5}. We let
$\ord_0$ be the natural valuation
\[
\ord_0:\KK[t]\setminus\{0\}\rightarrow\ZZ_{\geq 0},\quad
\sum_{i}a_{i}t^{i}\mapsto \min\{i\,|\, a_{i}\neq 0\}\,.
\]

\begin{lemme}\label{sec:line}
  Assume that $\chara(\KK)=p>0$. Let $\partial$ be an LFIHD on the
  polynomial algebra $\KK[t]$ in one variable such that
  \[
  e^{x\partial}(t) = t + \sum_{i = 1}^{r}\lambda_{i}x^{p^{s_{i}}},
  \]
  where $\lambda_{i}\in\KK^{*}$ and $0\leq s_{1}<\ldots <s_{r}$ are
  integers. We also fix a
  non-negative integer $i\in\ZZ_{\geq 0}$.

  If $\ell\in\ZZ_{\geq 0}$ verifies $\ell\geq ip^{s_{1}}$, then
  \[
  \partial^{(ip^{s_{1}})}(t^{\ell}) =
  \lambda_{1}^{i}\binom{\ell}{i}t^{\ell - i}
  \]
  and therefore $\ord_0\partial^{(ip^{s_{1}})}(t^{\ell}) = \ell - i$
  whenever $\binom{\ell}{i}\neq 0$.
\end{lemme}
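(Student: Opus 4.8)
The formula to prove is
\[
\partial^{(ip^{s_1})}(t^\ell) = \lambda_1^i \binom{\ell}{i} t^{\ell-i},
\]
under the hypothesis that $e^{x\partial}(t) = t + \sum_{k=1}^r \lambda_k x^{p^{s_k}}$. The natural strategy is to extract the relevant coefficient directly from the exponential map applied to $t^\ell$, using the multiplicativity of $e^{x\partial}$ (which follows from the Leibniz rule, Definition~\ref{sec:sit2.1}~$(ii)$). Concretely, since $e^{x\partial}$ is a $\KK$-algebra homomorphism into $\KK[t]\otimes_\KK \KK[x]$, we have
\[
e^{x\partial}(t^\ell) = \left(e^{x\partial}(t)\right)^\ell = \left(t + \sum_{k=1}^r \lambda_k x^{p^{s_k}}\right)^\ell.
\]
The quantity $\partial^{(ip^{s_1})}(t^\ell)$ is by definition the coefficient of $x^{ip^{s_1}}$ in this expansion, so the whole problem reduces to identifying that coefficient.

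First I would expand the $\ell$-th power by the multinomial theorem, grouping monomials by their total $x$-degree. A generic term is a product of $\ell$ factors, each either $t$ or some $\lambda_k x^{p^{s_k}}$; its $x$-degree is $\sum_k a_k p^{s_k}$, where $a_k$ counts how many factors equal $\lambda_k x^{p^{s_k}}$. The key arithmetic observation is that, because $s_1 < s_2 < \cdots < s_r$, the smallest $x$-degrees achievable are exactly the multiples of $p^{s_1}$ coming from using only the $k=1$ factor: to reach $x$-degree $i\,p^{s_1}$ with $i < p^{s_2 - s_1}$ one is forced to take $a_1 = i$ and $a_k = 0$ for $k\geq 2$, since any contribution from $k\geq 2$ already overshoots. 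Here I expect to invoke the constraint $\ell \geq i\,p^{s_1}$ precisely to guarantee that $\binom{\ell}{i}$ counts a nonempty, valid selection of $i$ factors out of $\ell$.

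This reduces the coefficient of $x^{i p^{s_1}}$ to the single contribution $\binom{\ell}{i}\lambda_1^i t^{\ell-i}$, which is exactly the claimed value; the order-of-vanishing statement $\ord_0 \partial^{(ip^{s_1})}(t^\ell) = \ell - i$ when $\binom{\ell}{i}\neq 0$ then follows immediately, since $t^{\ell-i}$ is a monomial. The main obstacle I foresee is the arithmetic subtlety in the indexing: one must be careful that no \emph{other} combination of the $a_k$'s produces an $x$-degree equal to $i\,p^{s_1}$. The cleanest way around this is to restrict attention to the case $i < p^{s_2 - s_1}$, where uniqueness of the representation of $i\,p^{s_1}$ is transparent from the $p$-adic digits; if the lemma is stated for all $i$, I would instead argue that even when larger $i$ admits other representations, those extra terms carry $x$-degree strictly exceeding $i\,p^{s_1}$ once one orders the base-$p$ carries correctly, so they never contribute. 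In either reading the essential point is the same: the exponent $p^{s_1}$ is the smallest available increment, so the coefficient of $x^{i p^{s_1}}$ isolates the purely ``$\lambda_1$'' term, and the multinomial coefficient collapses to the binomial $\binom{\ell}{i}$.
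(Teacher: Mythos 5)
Your overall strategy is the same as the paper's: expand $e^{x\partial}(t^\ell)=\bigl(t+\sum_k\lambda_k x^{p^{s_k}}\bigr)^\ell$ by the multinomial theorem and read off the coefficient of $x^{ip^{s_1}}$, and you correctly identify the crux, namely that one must control \emph{all} multi-indices $(a_1,\dots,a_r)$ with $\sum_k a_k p^{s_k}=ip^{s_1}$, not just the obvious one. The gap is in your proposed resolution for general $i$: the claim that the competing terms ``carry $x$-degree strictly exceeding $ip^{s_1}$'' cannot be right, since by definition a competing representation contributes to $x$-degree \emph{exactly} $ip^{s_1}$. Concretely, take $p=2$, $r=2$, $s_1=0$, $s_2=1$, $i=2$: then $(a_1,a_2)=(0,1)$ also gives $x$-degree $2=ip^{s_1}$, and the coefficient of $x^2$ in $(t+\lambda_1x+\lambda_2x^2)^\ell$ is $\binom{\ell}{2}\lambda_1^2t^{\ell-2}+\ell\lambda_2 t^{\ell-1}$, which differs from $\binom{\ell}{2}\lambda_1^2 t^{\ell-2}$ when $\ell$ is odd. (The map $t\mapsto t+\lambda_1x+\lambda_2x^2$ is a genuine $\GA$-action in characteristic $2$, so this lies within the hypotheses.)

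The correct way to dispose of the extra representations --- and what the paper actually does --- is to compare $t$-degrees rather than $x$-degrees. If $\sum_{k\ge1}a_kp^{s_k}=ip^{s_1}$ with $a_k>0$ for some $k\ge2$, then $\sum_k a_k<i$, so the corresponding monomial is $t^{\ell-\sum_k a_k}$ with exponent strictly larger than $\ell-i$; hence $\binom{\ell}{i}\lambda_1^i t^{\ell-i}$ is the unique contribution of lowest order in $t$, which gives $\ord_0\partial^{(ip^{s_1})}(t^\ell)=\ell-i$ whenever $\binom{\ell}{i}\neq0$. That order statement is the part of the lemma invoked later; the closed formula is used only for $i=1$, where the representation is automatically unique. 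Your restricted version for $i<p^{s_2-s_1}$ is correct and does yield the exact identity in that range, but the ``base-$p$ carries'' argument you sketch does not repair the general case, so as written your proof establishes neither the displayed identity for all $i$ nor, as a fallback, the $\ord_0$ conclusion.
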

\begin{proof}
  First of all, we have
  \[
  e^{x\partial}(t^{\ell}) = e^{x\partial}(t)^{\ell} = \left(t +
    \sum_{i = 1}^{r}\lambda_{i}x^{p^{s_{i}}}\right)^{\ell} =
  \sum_{i_{0} + \ldots + i_{r} = \ell,\,i_{0},\ldots,i_{r}\geq 0}
  \binom{\ell}{i_{0}\ldots i_{r}}t^{i_{0}}\prod_{\alpha =
    1}^{r}(\lambda_{\alpha}x^{p^{s_{\alpha}}})^{i_{\alpha}}.
  \]
  Considering the term of degree $ip^{s_{1}}$ in $x$ of the previous
  sum, we get the following conditions:
  \begin{align}\label{eq:2}
  ip^{s_{1}} = i_{1}p^{s_{1}} + \ldots + i_{r}p^{s_{r}}\quad\mbox{ and
  }\quad i_{0} + i_{1} + \ldots + i_{r} = \ell,
  \end{align}
  where $(i_{0},i_{1},\ldots, i_{r})\in\ZZ_{\geq 0}^{r+1}$. Note that
  such a $(r+1)$-tuple $(i_{0},i_{1},\ldots, i_{r})$ exists since
  $\ell\geq ip^{s_{1}}$ and so we can take
  \[
  (i_{0},i_{1},\ldots, i_{r}) = (\ell-i,i,0,\ldots, 0).
  \]
  Let us show that this is the minimal choice for $i_{0}\in\ZZ_{\geq
    0}$.  Indeed, let $(\gamma_{0},\gamma_{1},\ldots,
  \gamma_{r})\in\ZZ_{\geq 0}^{r}$ be an $(r+1)$-uplet
  satisfying~\eqref{eq:2} with $\gamma_0$ minimal. Then we have
  \[
  \ell - i = \ell - \sum_{\alpha = 1}^{r}\gamma_{\alpha}p^{s_{\alpha} -
    s_{1}}\leq \ell - \sum_{\alpha = 1} \gamma_{\alpha} = \gamma_{0}.
  \]
  Hence by minimality, $\gamma_{0} = \ell - i$, so that $i =
  \sum_{\alpha = 1}^{r}\gamma_{\alpha}$.  Thus,
  \[
  \left( \sum_{\gamma_{\alpha}}^{r}\gamma_{\alpha}\right) p^{s_{1}} =
  \sum_{\alpha = 1}^{r}\gamma_{\alpha}p^{s_{\alpha}}.
  \]
  We obtain $(\gamma_{0},\gamma_{1},\ldots, \gamma_{r}) =
  (\ell-i,i,0,\ldots, 0)$.  This implies in particular that
  $\partial^{(ip^{s_{1}})}(t^\ell) = \lambda_{1}^{i}\binom{\ell}{i}t^{\ell -
    i}$ as required.
\end{proof}

\section{$\GA$-actions on affine toric varieties} \label{sec3}

Let $\KK$ be a field.  In this section, we present a combinatorial
description of normalized $\GA$-actions up to a Frobenius map on
affine toric varieties over $\KK$.

For a rational cone $\sigma\subseteq N_{\RR}$ we recall that $\sigma(1)$
denotes its set of extremal rays. As usual we write by the same letter
a ray of $\sigma$ and its primitive vector.  The following is a
classical definition, see for instance \cite{Dem,Li,AL}. 

\begin{definition}
  Let $\sigma\subseteq N_{\RR}$ be a strongly convex rational cone. A
  vector $e\in M$ is called a \emph{Demazure's root} (or for
  simplicity called \emph{root}) if the following hold.
  \begin{enumerate}[$(i)$]
  \item There exists $\rho\in \sigma(1)$ such that $\langle
    e,\rho\rangle =-1$.
  \item For any $\rho'\in \sigma(1)\setminus\{\rho\}$ we have $\langle
    e,\rho'\rangle\geq 0$.
  \end{enumerate}
  The extremal ray $\rho$ satisfying $\langle e,\rho\rangle =-1$ is
  called the \emph{distinguished ray} of the root $e\in M$. We denote
  by $\rt\sigma$ the set of Demazure's roots of the cone
  $\sigma$.  By \cite[Remark~2.5]{Li} every element of $\sigma(1)$ is
  the distinguished ray of a root of $\rt\sigma$.
\end{definition}

Since the subset $\KK[\TT]^{*}$ generates the algebra $\KK[\TT]$,
Proposition~\ref{sec:LFIHD}~$(e)$ implies that $\KK[\TT]$ has no
non-trivial LFIHDs. So without loss of generality, in the sequel, we
may only consider toric varieties $X_{\sigma} =
\spec\KK[\sigma^{\vee}_{M}]$ given by a nonzero strongly convex
rational cone $\sigma\subseteq N_{\RR}$.

\begin{exemple} \label{sec:ex3.2}
  Let $e\in\rt \sigma$ be a root. Consider the homogeneous derivation
  $\partial_e^{(1)}$ on the semigroup algebra $\KK[\sigma^{\vee}_{M}]$ given
  by
  \[
  \partial_{e}^{(1)}(\chi^{m}) = \langle m,\rho\rangle
  \chi^{m+e}\quad\mbox{for all}\quad m\in\sigma^\vee_M\,,
  \]
  where $\rho$ is the distinguished ray of $e$. Then
  $\partial_{e}^{(1)}$ is locally nilpotent and yields a $\GA$-action
  on $X_{\sigma}$ in the following natural way: the homogeneous LFIHD
  $\partial_e$ is given by the formula\footnote{We set the
    convention that $\binom{r_{1}}{r_{2}} = 0$, for all $r_{1}$,
    $r_{2}\in\ZZ_{\geq 0}$ with $r_{1}<r_{2}$.}
  \[
  \partial_{e}^{(i)}(\chi^{m}) = \binom{\langle
    m,\rho\rangle}{i}\cdot\chi^{m+ie}\quad\mbox{for all}\quad i \in
  \ZZ_{\geq 0} \quad\mbox{and}\quad m\in\sigma^\vee_M\,.
  \]
  The kernel of $\partial_e$ is $\KK[\rho^{\star}_M]$, where
  $\rho^{\star}\subseteq \sigma^{\vee}$ is the dual face of $\rho$.

  Assume now that $\chara(\KK) = p>0$.  Starting from $\partial_{e}$
  and an integer $r\in\ZZ_{\geq 0}$ we can also define a rationally homogeneous
  LFIHD $\partial_{e,r}$ of degree $e/p^{r}\in M_\QQ$. Its exponential
  map is
  \[
  e^{x\partial_{e,r}} = \sum_{i =
    0}^{\infty}\partial_{e}^{(i)}\,x^{ip^{r}}\,.
  \]
  We check easily that $\ker \partial_{e,r} = \KK[\rho^{\star}_M]$.
  In addition, for any $m\in\sigma^{\vee}_{M}$ we have
  \[
  \deg_{x} e^{x\partial_{e,r}}(\chi^{m}) = p^{r}\langle
  m,\rho\rangle.
  \]
\end{exemple}

We start by describing the kernel and the possible degree vectors
of a homogeneous LFIHD on $\KK[\sigma^{\vee}_{M}]$, where $\sigma$ is
a nonzero strongly convex rational cone.

\begin{lemme} \label{sec:lem3.3}
  Consider a non-trivial homogeneous LFIHD $\partial$ on
  $\KK[\sigma^{\vee}_{M}]$. Then the following statements hold.
  \begin{enumerate}[$(i)$]
  \item There exists $\rho\in\sigma(1)$ such that $\rm \ker \partial =
    \KK[\rho^{\star}\cap M]$.
  \item The degree $e\in M$ of the sequence $\partial$ is a Demazure's
    root of $\sigma$ and $\rho$ is the distinguished ray of $e$.
  \end{enumerate}
\end{lemme}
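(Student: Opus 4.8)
The plan is to first identify $\ker\partial$ combinatorially and then read off the numerical conditions on $e$ from the way $\partial$ moves monomials in and out of this kernel. Throughout I write $A=\KK[\sigma^\vee_M]=\bigoplus_{m\in\sigma^\vee_M}\KK\chi^m$; since each homogeneous component is one-dimensional, homogeneity of $\partial$ forces $\partial^{(i)}(\chi^m)=c_{i,m}\,\chi^{m+ie}$ for scalars $c_{i,m}\in\KK$, with $c_{0,m}=1$ and $c_{i,m}=0$ unless $m+ie\in\sigma^\vee_M$.

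For $(i)$ I would use that $\ker\partial$ is $M$-graded, so $\ker\partial=\KK[S]$ for the submonoid $S=\{m\in\sigma^\vee_M\mid\chi^m\in\ker\partial\}$. Proposition~\ref{sec:LFIHD}$(a)$ (factorial closedness) translates into the extremality property $m_1+m_2\in S\Rightarrow m_1,m_2\in S$, while Proposition~\ref{sec:LFIHD}$(b)$ makes $\ker\partial$ normal, so $S$ is saturated. By standard convex geometry $\tau:=\cone(S)$ is then a face of $\sigma^\vee$ with $S=\tau\cap M$. Proposition~\ref{sec:LFIHD}$(c)$ gives $\dime\tau=\rank M-1$, so $\tau$ is a facet; as the facets of $\sigma^\vee$ are exactly the dual faces $\rho^\star=\rho^\perp\cap\sigma^\vee$ for $\rho\in\sigma(1)$, this yields $\ker\partial=\KK[\rho^\star_M]$.

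The heart of $(ii)$ is a leading-term principle valid in any characteristic. If $\chi^m\notin\ker\partial$, let $\nu=\nu(m)\ge 1$ be the largest index with $\partial^{(\nu)}(\chi^m)\neq 0$ (finite by local finiteness). For every $j>0$, Definition~\ref{sec:sit2.1}$(iv)$ gives $\partial^{(j)}\big(\partial^{(\nu)}(\chi^m)\big)=\binom{\nu+j}{j}\partial^{(\nu+j)}(\chi^m)=0$, so $\partial^{(\nu)}(\chi^m)\in\ker\partial$. As $\partial^{(\nu)}(\chi^m)$ is a nonzero multiple of $\chi^{m+\nu e}$ and $\ker\partial=\KK[\rho^\star_M]$ by $(i)$, this forces $\langle m,\rho\rangle+\nu\langle e,\rho\rangle=0$. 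Applying this to any $m$ with $\langle m,\rho\rangle>0$ (so that $\chi^m\notin\ker\partial$) shows $\langle e,\rho\rangle<0$ and that $\langle e,\rho\rangle$ divides $\langle m,\rho\rangle$.

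To conclude, I would produce $m^\ast\in\sigma^\vee_M$ with $\langle m^\ast,\rho\rangle=1$: taking $m_1\in M$ with $\langle m_1,\rho\rangle=1$ (possible since $\rho$ is primitive) and $m_0\in\relint(\rho^\star)\cap M$, the point $m^\ast=Nm_0+m_1\in\sigma^\vee_M$ for $N\gg0$ has height one over $\rho$, and the divisibility above then forces $\langle e,\rho\rangle=-1$. For a ray $\rho'\neq\rho$, I pick $m'\in\relint\big((\rho')^\star\big)\cap M$, so that $\langle m',\rho\rangle>0$ (whence $\chi^{m'}\notin\ker\partial$) and $\langle m',\rho'\rangle=0$; some $\partial^{(i)}(\chi^{m'})\neq0$ with $i>0$ forces $m'+ie\in\sigma^\vee_M$, hence $\langle e,\rho'\rangle\ge0$. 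These are precisely the conditions that $e\in\rt\sigma$ with distinguished ray $\rho$. The main obstacle is positive characteristic: I deliberately avoid arguing through $\partial^{(1)}$ (which may vanish) or through invertibility of binomial coefficients, so that the leading-term principle together with the observation that non-invariant monomials must be moved by \emph{some} $\partial^{(i)}$ carries the proof uniformly; the supporting facts about $m^\ast$ and relative interiors of facets are purely lattice-theoretic and insensitive to $\KK$.
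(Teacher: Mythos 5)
Your argument is correct and complete, but it follows a genuinely different route from the paper's, most visibly in part $(ii)$. For $(i)$, both proofs rest on Proposition~\ref{sec:LFIHD}: the paper shows that the weight set of $\ker\partial$ cuts out a face of $\sigma^{\vee}$ by a dynamical contradiction (if not, one uses Definition~\ref{sec:sit2.1}~$(iv)$ to turn $\partial^{(r'_1)}\circ\partial^{(r_0)}(\chi^m)\neq 0$ into $\partial^{(r_0+r'_1)}(\chi^m)\neq 0$ and builds an infinite strictly increasing sequence of non-vanishing terms, contradicting local finiteness), whereas you deduce the face property of the weight monoid directly from factorial closedness and then invoke the standard correspondence between faces of the affine monoid $\sigma^{\vee}_M$ and faces of the cone $\sigma^{\vee}$; this is tidier, at the cost of leaning on that standard but nontrivial monoid fact. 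For $(ii)$ the paper merely notes that $e\in\sigma^{\vee}$ would give the same contradiction and defers the rest to the characteristic-zero argument of \cite[Lemma~2.4]{Li}. Your leading-term principle --- that $\partial^{(\nu)}(\chi^m)\in\ker\partial$ for the top index $\nu$, because $\binom{\nu+j}{j}\partial^{(\nu+j)}(\chi^m)=0$ irrespective of whether the binomial coefficient vanishes in $\KK$ --- is a clean characteristic-free substitute for the locally nilpotent derivation argument, and it is in fact the same mechanism the paper uses in its proof of Proposition~\ref{sec:LFIHD}~$(f)$. Combined with your lattice-point selections ($m^{\ast}=Nm_0+m_1$ of height one over $\rho$, and $m'\in\relint((\rho')^{\star})\cap M$), it yields $\langle e,\rho\rangle=-1$ and $\langle e,\rho'\rangle\geq 0$ in a self-contained way; note that the height-one vector must be produced without assuming $e$ is already a root, which your construction does (the analogous construction in the paper's proof of Theorem~\ref{sec:th3.5} uses the root property and would be circular here). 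In short, your version makes explicit and uniform over $\chara\KK$ what the paper handles by reference.
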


\begin{proof}
  $(i)$ By Proposition~\ref{sec:LFIHD}~$(a)$ we have $\ker\partial =
  \KK[W\cap\sigma^{\vee}_{M}]$ for some linear subspace $W\subseteq
  M_{\RR}$.  Assume that $W\cap\sigma^{\vee}$ is not a face of
  $\sigma^{\vee}$. Then $W$ divides $\sigma^{\vee}$ into two parts. We
  can find $m\in\sigma^{\vee}_{M}$ such that for any $r\in\ZZ_{\geq
    0}$, $m+re\not\in W$. Since $\chi^{m}\not\in \ker\partial$, there
  is some $r_{0}\in\ZZ_{>0}$ satisfying
  $\partial^{(r_{0})}(\chi^{m})\neq 0$.  Hence
  $\partial^{(r_{0})}(\chi^{m})$ is homogeneous of degree
  $m+r_{0}e$. By the previous argument
  \[
  \partial^{(r'_{1})}\circ\partial^{(r_{0})}(\chi^{m})\neq 0
  \quad\mbox{for some}\quad r'_{1}\in\ZZ_{>0}\,.
  \]

  By Definition~\ref{sec:sit2.1}~$(iv)$ we have
  $\partial^{(r_{0}+r_{1}')}(\chi^{m})\neq 0$ and so we let $r_{1} =
  r_{0}+r'_{1}$.  Proceeding by induction we can build a strictly
  increasing sequence of positive integers $\{r_{j}\}_{j\in\ZZ_{\geq 0}}$
  verifying $\partial^{(r_{j})}(\chi^{m})\neq 0$ for any
  $j\in\ZZ_{\geq 0}$. This contradicts the fact that $\partial$ is an
  LFIHD. Thus $W\cap \sigma^{\vee}$ is a face of
  $\sigma^{\vee}$. Since $\rm ker\,\partial$ is a subring of
  codimension one, we have $W\cap\sigma^\vee_M=\rho^\star\cap M$ for
  some extremal ray $\rho\in\sigma(1)$.

  $(ii)$ If $e\in\sigma^{\vee}_{M}$, then the same argument as before
  gives a contradiction.  The rest of the proof follows as in
  \cite[Lemma~2.4]{Li}.
\end{proof}

In the following lemma, we state some properties of a homogeneous
LFIHD on $\KK[\sigma^{\vee}_{M}]$.

\begin{lemme} \label{sec:lem3.4} Let $\partial$ be a non-trivial
  homogeneous LFIHD on $\KK[\sigma^{\vee}_{M}]$ of degree $e$ and with
  distinguished ray $\rho$. For every $i\in\ZZ_{\geq 0}$ we let
  $c_i:\sigma^\vee_M\rightarrow \KK$ be such that
  $ \partial^{(i)}(\chi^{m}) = c_{i}(m)\chi^{m+ie}$.  Then the
  sequence $\{c_{i}\}_{i\in\ZZ_{\geq 0}}$ of functions on
  $\sigma^{\vee}_{M}$ satisfies the following conditions.
  \begin{enumerate}[$(i)$]
  \item The map $c_{0}$ is the constant map $m\mapsto 1$.
  \item For all $m,m'\in\sigma^{\vee}_{M}$ we have
    \begin{align}\label{eq:3}
      c_{i}(m+m') = \sum_{j = 0}^{i}c_{i-j}(m)\cdot c_{j}(m'). 
    \end{align}
  \item For every $m\in\sigma^{\vee}_M$ there exists
    $r\in\ZZ_{\geq 0}$ such that $c_{i}(m)=0$ for all $i\geq r$.
  \item For every $i,j\in\ZZ_{\geq 0}$ we have
    \[
    \binom{i+j}{i}\,c_{i+j}(m) = c_{i}(m+je)\cdot
    c_{j}(m)\quad\mbox{for all}\quad m\in\sigma^{\vee}_{M}\,.
    \]
  \item For every $i\in \ZZ_{\geq 0}$ we have $c_{i}(m+m') = c_{i}(m)$ for all
    $m\in\sigma^{\vee}_{M}$ and all $m'\in\rho^{\star}\cap M$.
  \end{enumerate}
\end{lemme}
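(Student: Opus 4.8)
The plan is to obtain each of the five identities by transcribing the corresponding axiom of Definition~\ref{sec:sit2.1} into the language of the coefficient functions $c_{i}$. The one structural fact that makes this transcription legitimate is that each graded piece of $\KK[\sigma^{\vee}_{M}]$ is the \emph{one-dimensional} space $\KK\chi^{m}$; thus homogeneity of degree $e$ forces $\partial^{(i)}(\chi^{m})$ to be a scalar multiple of the single monomial $\chi^{m+ie}$, the scalar being $c_{i}(m)$, and any identity between homogeneous elements of the same degree can be read off by comparing the (unique) coefficient of that monomial.

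Statements (i)--(iv) then follow with no genuine computation. For (i), $\partial^{(0)}$ is the identity by Definition~\ref{sec:sit2.1}(i), so $\chi^{m} = \partial^{(0)}(\chi^{m}) = c_{0}(m)\chi^{m}$ gives $c_{0}(m) = 1$. For (ii), I would apply the Leibniz rule Definition~\ref{sec:sit2.1}(ii) to the product $\chi^{m}\cdot\chi^{m'} = \chi^{m+m'}$, substitute $\partial^{(k)}(\chi^{n}) = c_{k}(n)\chi^{n+ke}$ on both sides, and compare the coefficient of the common monomial $\chi^{m+m'+ie}$; this is exactly \eqref{eq:3}. For (iii), local finiteness Definition~\ref{sec:sit2.1}(iii) applied to $f = \chi^{m}$ provides $r$ with $\partial^{(i)}(\chi^{m}) = 0$, hence $c_{i}(m) = 0$, for all $i\geq r$. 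For (iv), I would evaluate the iterative relation Definition~\ref{sec:sit2.1}(iv) on $\chi^{m}$: the right-hand side is $\binom{i+j}{i}c_{i+j}(m)\chi^{m+(i+j)e}$, while the left-hand side is computed by composing, $\partial^{(i)}\bigl(c_{j}(m)\chi^{m+je}\bigr) = c_{j}(m)\,c_{i}(m+je)\,\chi^{m+(i+j)e}$, and comparing coefficients yields the claim.

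The only step that is not a pure formality is (v), which is where the preceding lemma enters. By Lemma~\ref{sec:lem3.3}(i) we have $\ker\partial = \KK[\rho^{\star}\cap M]$, so every $m'\in\rho^{\star}\cap M$ satisfies $c_{j}(m') = 0$ for all $j\geq 1$, while $c_{0}(m') = 1$ by part (i). Since $\rho^{\star}$ is a face of $\sigma^{\vee}$, the sum $m+m'$ again lies in $\sigma^{\vee}_{M}$, so $c_{i}(m+m')$ is defined; substituting the vanishing of $c_{j}(m')$ into the already-established formula \eqref{eq:3} collapses the sum to its single term $j = 0$, giving $c_{i}(m+m') = c_{i}(m)\,c_{0}(m') = c_{i}(m)$. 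Thus the main (and essentially only) obstacle is recognizing that (v) is not self-contained but requires the identification of $\ker\partial$ with the face semigroup algebra furnished by Lemma~\ref{sec:lem3.3}; everything else is a direct reading of the LFIHD axioms through the one-dimensionality of the graded pieces.
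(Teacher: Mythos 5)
Your proposal is correct and follows exactly the same route as the paper: parts (i)--(iv) are direct transcriptions of the four LFIHD axioms using the one-dimensionality of the graded pieces, and (v) is obtained by noting that $\chi^{m'}\in\ker\partial=\KK[\rho^{\star}\cap M]$ forces $c_{j}(m')=0$ for $j\geq 1$ and then collapsing the sum in \eqref{eq:3}. The paper merely states (i)--(iv) as immediate from the definition, so your write-up is simply a more explicit version of the same argument.
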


\begin{proof}
  Assertions $(i)$, $(ii)$, $(iii)$ and $(iv)$ follow from the
  definition of LFIHD. Let us show $(v)$.  Since $\chi^{m'}\in
  \ker\partial$, for any $j\in\ZZ_{>0}$ we have $c_{j}(m') =
  0$. Applying \eqref{eq:3} we obtain $c_{i}(m+m') = c_{i}(m)$.
\end{proof}

The next result provides a classification of normalized
$\GA$-actions on $X_{\sigma}$. See \cite[Theorem~2.7]{Li}
for the case where $\chara(\KK)=0$.

\begin{theorem}\label{sec:th3.5}
  Let $\sigma\subseteq N_{\RR}$ be a nonzero strongly convex rational
  cone.  Every non-trivial $\GA$-action on $X_{\sigma}$ normalized by
  the $\TT$-action is given by a homogeneous LFIHD of the form
  $\lambda\partial_{e}$, where $\partial_e$ is as in
  Example~\ref{sec:ex3.2}, $e\in \rt \sigma$ and $\lambda\in\KK^{*}$.
\end{theorem}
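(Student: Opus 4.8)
The plan is to pin down the coefficient functions $c_i$ of Lemma~\ref{sec:lem3.4} completely by means of a generating-function argument that sidesteps the division by $i!$ available only in characteristic zero. First I would invoke Lemma~\ref{sec:lem3.3}: a non-trivial homogeneous LFIHD $\partial$ on $\KK[\sigma^{\vee}_{M}]$ has kernel $\KK[\rho^{\star}\cap M]$ for some $\rho\in\sigma(1)$ and degree $e\in\rt\sigma$ with distinguished ray $\rho$, so that $\langle e,\rho\rangle=-1$. Writing $\partial^{(i)}(\chi^{m})=c_i(m)\chi^{m+ie}$ as in Lemma~\ref{sec:lem3.4}, I introduce for each $m\in\sigma^{\vee}_{M}$ the polynomial $\phi_m(y)=\sum_{i\geq 0}c_i(m)y^{i}\in\KK[y]$, which is well defined by local finiteness. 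The Leibniz identity of Lemma~\ref{sec:lem3.4}~$(ii)$ says precisely that $\phi_{m+m'}=\phi_m\phi_{m'}$, so $m\mapsto\phi_m$ is a homomorphism from the monoid $(\sigma^{\vee}_{M},+)$ into the multiplicative monoid of $\KK[y]$, with $\phi_0=1$ and $\phi_t=1$ for every $t\in\rho^{\star}\cap M$ since such $t$ lie in $\ker\partial$.

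The second step is a degree bound. Since $\partial^{(i)}(\chi^{m})$ lies in $A=\KK[\sigma^{\vee}_{M}]$, whenever $c_i(m)\neq 0$ we must have $m+ie\in\sigma^{\vee}_{M}$, hence $\langle m+ie,\rho\rangle=\langle m,\rho\rangle-i\geq 0$; thus $\deg\phi_m\leq\langle m,\rho\rangle$. Next I would exhibit a reference weight $m_0\in\sigma^{\vee}_{M}$ with $\langle m_0,\rho\rangle=1$: choosing a lattice point $w$ in the relative interior of the facet $\rho^{\star}$ (so $\langle w,\rho\rangle=0$ and $\langle w,\rho'\rangle>0$ for the other rays) and setting $m_0=Nw-e$ for $N$ large, the root inequalities $\langle e,\rho'\rangle\geq 0$ guarantee $m_0\in\sigma^{\vee}_{M}$ while $\langle m_0,\rho\rangle=-\langle e,\rho\rangle=1$. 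For this $m_0$ the degree bound forces $\phi_{m_0}=1+\lambda y$ with $\lambda:=c_1(m_0)$.

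The final step propagates this to an arbitrary $m\in\sigma^{\vee}_{M}$. Put $n=\langle m,\rho\rangle$. Then $m-nm_0\in\rho^{\perp}\cap M$, and since the lattice points of the full-dimensional facet $\rho^{\star}$ generate $\rho^{\perp}\cap M$ as a group, I can write $m-nm_0=t_1-t_2$ with $t_1,t_2\in\rho^{\star}\cap M$. Hence $m+t_2=nm_0+t_1$ in $\sigma^{\vee}_{M}$, and applying the homomorphism $\phi$ together with $\phi_{t_1}=\phi_{t_2}=1$ gives $\phi_m=\phi_{nm_0}=(\phi_{m_0})^{n}=(1+\lambda y)^{n}$. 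Reading off coefficients yields $c_i(m)=\lambda^{i}\binom{\langle m,\rho\rangle}{i}$ for all $i$ and $m$, which is exactly $\partial=\lambda\partial_e$ with $\partial_e$ as in Example~\ref{sec:ex3.2}. Non-triviality of $\partial$ forbids $\lambda=0$ (otherwise every $\phi_m=1$ and $\partial$ is trivial), so $\lambda\in\KK^{*}$, completing the proof.

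I expect the main obstacle to be organizing the argument so as to avoid any division by integers that vanish modulo $p$: the characteristic-zero proof recovers $\partial^{(i)}$ from the locally nilpotent derivation $\partial^{(1)}$ via $\partial^{(i)}=(\partial^{(1)})^{\circ i}/i!$, which is unavailable here. The generating-function reformulation converts the iterative and Leibniz data into the single multiplicativity relation $\phi_{m+m'}=\phi_m\phi_{m'}$, and the only genuinely geometric inputs then needed are the degree bound and the fact that the facet $\rho^{\star}$ spans the hyperplane $\rho^{\perp}$, both of which hold in arbitrary characteristic.
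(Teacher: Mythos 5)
Your argument is correct, and it reaches the conclusion by a cleaner route than the paper's, though the underlying skeleton is the same. Both proofs start from Lemma~\ref{sec:lem3.3}, both locate a reference weight $m_0$ with $\langle m_0,\rho\rangle=1$ (the paper takes $m_0=m'+(\langle m',\rho\rangle-1)e$ via \cite[Lemma~2.4]{Li}, you take $m_0=Nw-e$ with $w$ interior to the facet $\rho^\star$ --- note it is the strict positivity $\langle w,\rho'\rangle>0$ together with $N\gg0$, not the root inequalities themselves, that puts $Nw-e$ in $\sigma^\vee$), and both propagate from $m_0$ to a general weight using invariance of the coefficients under translation by $\rho^\star_M$ (the paper via the explicit elements $w+\langle w,\rho\rangle e$ and $\langle w,\rho\rangle(e+m)$ of $\rho^\star_M$, you via the fact that $\rho^\star_M$ generates $\rho^\perp\cap M$). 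Where you genuinely diverge is the middle step: the paper transplants the data $\{c_i(rm_0)\}$ into an auxiliary LFIHD $\bar\partial$ on $\KK[x]$, which requires verifying the iterativity axiom for $\bar\partial$ (using Lemma~\ref{sec:lem3.4}~$(iv)$,$(v)$) and then invoking Proposition~\ref{sec:LFIHD}~$(d)$ on the structure of $\GA$-actions on $\AF^1$ in positive characteristic. Your generating-function formulation $\phi_{m+m'}=\phi_m\phi_{m'}$ together with the degree bound $\deg\phi_{m_0}\le\langle m_0,\rho\rangle=1$ makes both of these unnecessary: once $\phi_{m_0}=1+\lambda y$ is forced, multiplicativity alone yields $\phi_m=(1+\lambda y)^{\langle m,\rho\rangle}$. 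This buys you a proof that uses only the Leibniz rule, local finiteness, and the output of Lemma~\ref{sec:lem3.3} (where iterativity is still needed), whereas the paper's detour through $\KK[x]$ additionally leans on the classification of additive actions on the affine line. The price is essentially nil; if anything your version makes it clearer which axioms carry the weight.
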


\begin{proof}
  Let $\partial$ be a non-trivial homogeneous LFIHD of degree $e$ on
  $\KK[\sigma^{\vee}_{M}]$.  By Lemma~\ref{sec:lem3.3}, $e$ is a root
  of $\sigma$ and $\ker\partial = \KK[\rho^{\star}\cap M]$, where
  $\rho\in\sigma(1)$ is the distinguished ray of the root $e$.

  Let us first show that there exists a lattice vector
  $m\in\sigma^{\vee}_{M}$ such that $\langle m, \rho\rangle = 1$. Let
  $m'\in\sigma^{\vee}_{M}$ not contained in the face $\rho^{\star}$ so
  that $\langle m',\rho\rangle > 1$. By \cite[Lemma 2.4]{Li}, we have
  that $m := m' + (\langle m',\rho\rangle-1)\cdot
  e\in\sigma^{\vee}_{M}$ satisfies $\langle m,\rho\rangle = 1$.

  We let $c_i:\sigma^\vee_M\rightarrow \KK$ be the maps defined in
  Lemma~\ref{sec:lem3.4}.  Let $B = \KK[x]$ be the polynomial algebra
  of one variable. Using the basis $(1,x,x^{2},\ldots)$ we define a
  sequence of linear operators
  $\bar{\partial}=\{\bar{\partial}^{(i)}\}_{i\in\ZZ_{\geq 0}}$ on the
  $\KK$-linear space $B$ as follows: fixing a vector
  $m\in\sigma^{\vee}_{M}$ verifying $\langle m,\rho\rangle = 1$ we
  define 
  \[
  \bar{\partial}^{(i)}(x^{r}) = c_{i}(rm)x^{r-i}\quad\mbox{for
    all}\quad i,r\in \ZZ_{\geq 0}\,.
  \]
  We claim that $\bar{\partial}$ is well defined. Indeed, let
  $i,r\in\ZZ_{\geq 0}$ be such that $i>r$, then
  \[
  \partial^{(i)}(\chi^{rm}) = c_{i}(rm)\chi^{rm+ie}\in
  \KK[\sigma^{\vee}_{M}]\quad\mbox{and}\quad \langle rm+ie, \rho
  \rangle = r-i < 0\quad\mbox{so that}\quad c_{i}(rm) = 0\,.
  \]
  Hence, $\bar{\partial}^{(i)}(x^{r}) = c_{i}(rm)x^{r-i}=0$ for all
  $i>r$.

  By Lemma~\ref{sec:lem3.4}, the sequence of operators
  $\bar{\partial}$ is an LFIHD on $B$.  For instance, let us show that
  $\bar{\partial}$ satisfies Definition~\ref{sec:sit2.1}~$(iv)$.
  Letting $i,j\in\ZZ_{\geq 0}$ we have
  \[
  \bar{\partial}^{(i)}\circ \bar{\partial}^{(j)}(x^{r}) =
  \bar{\partial}^{(i)}(c_{j}(rm)x^{r-j}) = c_{i}((r-j)m)\cdot
  c_{j}(rm)x^{r-i-j}.
  \]
  Since $e\in \rt\sigma$ is a root having $\rho$ as distinguished ray,
  it follows that
  \[
  v:= rm + je - (r-j)m = j(m+e)\in\rho^{\star}\cap M.
  \]
  By Lemma~\ref{sec:lem3.4}~$(v)$, we have
  \[
  c_{i}((r-j)m) = c_{i}((r-j)m + v) = c_{i}(rm + je).
  \]
  Therefore by Lemma~\ref{sec:lem3.4}~$(iv)$, we conclude that
  \[
  \bar{\partial}^{(i)}\circ\bar{\partial}^{(j)}(x^{r}) =
  \binom{i+j}{i}c_{i+j}(rm)x^{r-i-j} =
  \binom{i+j}{i}\bar{\partial}^{(i+j)}(x^{r}),
  \]
  as required. Conditions $(i),(ii),(iii)$ of
  Definition~\ref{sec:sit2.1} follow from similar straightforward
  computations.

  Since $\bar{\partial}$ is homogeneous for the natural graduation of
  $B$, by Proposition~\ref{sec:LFIHD}~$(d)$ there exists
  $\lambda\in\KK$ such that every $c_{i}$ verifies
  \[
  c_{i}(rm) = \binom{r}{i} \,\lambda^{i}
  \]
  for any $r\in\ZZ_{\geq 0}$. We use the convention $\lambda^{0} = 1$ whenever
  $\lambda = 0$. Let $w\in\sigma^{\vee}_{M}$ be a lattice vector. The
  elements
  \[
  w + \langle w,\rho\rangle e, \, \langle w,\rho\rangle e + \langle
  w,\rho\rangle m
  \]
  belong to $\rho^{\star}\cap M$. By Lemma $3.4\,\rm (v)$ this implies
  \begin{align}\label{eq:4}
    c_{i}(w) = c_{i}\left( w + \langle w,\rho\rangle e + \langle w,\rho\rangle m\right )=
    c_{i}\left (\langle w,\rho\rangle m\right ) = \binom{\langle w,\rho\rangle}{i}\,\lambda^{i}.
  \end{align}
  Since $\partial$ is non-trivial, we have $\lambda\in\KK^{*}$. By
  virtue of \eqref{eq:4} the sequence $\partial$ is given by the LFIHD
  $\lambda\partial_{e}$ (see Example $3.2$).
\end{proof}

\begin{exemple}
  Let $M=\ZZ^2$ and let $\sigma$ be the strongly convex rational cone
  generated in the vector space $N_\RR=\RR^2$ by the vectors and
  $\rho=(0,1)$ and $\rho'=(2,-1)$. The dual cone $\sigma^\vee$ is the
  cone in $M_\RR$ generated by the vectors $(1,0)$ and $(1,2)$. Let
  $A=\KK[\sigma^\vee_M]$ and let $X=\spec A$ be the corresponding
  toric variety. The algebra $A$ is generated by the elements
  \[u_1=\chi^{(1,0)},\quad u_2=\chi^{(1,1)}\quad\mbox{and}\quad
  u_3=\chi^{(1,2)}\,.\] The generators satisfy the relation
  $u_1u_3=u_2^2$ and so $A=\KK[x,y,z]/(xz-y^2)$. The lattice vector
  $e=(0,-1)\in M$ is a root of $\sigma$ since $\langle
  e,\rho\rangle=-1$ and $\langle e,\rho'\rangle=1$.

  \begin{figure}[!ht]
    \input{toric.tex}
  \end{figure}

  The corresponding LFIHD $\partial_e$ of Example~\ref{sec:ex3.2} is
  given by
  \begin{align*}
    &\partial_e^{(0)}(x)=x,\quad\partial_e^{(i)}(x)=0,\quad \mbox{for
      all } i>0\,; \\
    &\partial_e^{(0)}(y)=y,\quad \partial_e^{(1)}(y)=x,\quad
    \partial_e^{(i)}(y)=0, \quad \mbox{for all }i>1\,; \\
    &\partial_e^{(0)}(z)=z,\quad \partial_e^{(1)}(z)=2y,\quad
    \partial_e^{(2)}(z)=x,\quad\partial_e^{(i)}(z)=0,\quad \mbox{for
      all }i>2\,.
  \end{align*}
  Hence, the corresponding normalized $\GA$-action $\phi$ is defined
  by
  \[\phi:\GA\times X\rightarrow X,\quad\mbox{where}\quad
  (\lambda,(x,y,z))\mapsto (x,y+\lambda x,z+2\lambda y+\lambda^2z)\,.\]
\end{exemple}

As an immediate consequence of Theorem~\ref{sec:th3.5}, we obtain a
description of all normalized $\GA$-actions up to a Frobenius map.

\begin{corollaire}\label{cor3.6}
  Let $\sigma\subseteq N_{\RR}$ be a nonzero strongly convex rational
  cone.  Then for every root $e\in \rt\sigma$ with distinguished ray
  $\rho$, every integer $r\in\ZZ_{\geq 0}$, and every scalar
  $\lambda\in\KK^{*}$, there is a non-trivial rationally homogeneous
  LFIHD $\partial$ on the algebra $\KK[\sigma^{\vee}_{M}]$ whose
  exponential is given by
  \[
  e^{x\partial}(\chi^{m}) = \sum_{i = 0}^{\infty}\binom{\langle
    m,\rho\rangle}{i}\lambda^{i}\,\chi^{m+ie}x^{ip^{r}}\quad\mbox{for
    all}\quad m\in\sigma^{\vee}_{M}\,.
  \]
  Conversely, every rationally homogeneous LFIHD on
  $\KK[\sigma^{\vee}_{M}]$ arises in this way.
\end{corollaire}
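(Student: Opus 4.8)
The plan is to reduce the statement to Theorem~\ref{sec:th3.5} by setting up an explicit bijection between homogeneous LFIHDs of degree $e$ and rationally homogeneous LFIHDs of bidegree $(e,p^r)$ on $\KK[\sigma^\vee_M]$. Given a homogeneous LFIHD $\psi=\{\psi^{(i)}\}$ of degree $e$, I would define its $p^{r}$-\emph{stretching} $\psi_{[r]}$ by $\psi_{[r]}^{(ip^{r})}=\psi^{(i)}$ for all $i\in\ZZ_{\geq 0}$ and $\psi_{[r]}^{(j)}=0$ whenever $p^{r}\nmid j$; conversely, for a rationally homogeneous LFIHD $\partial$ of bidegree $(e,p^{r})$ I would set $\psi^{(i)}:=\partial^{(ip^{r})}$. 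The content of Example~\ref{sec:ex3.2} is precisely this construction in the special case $\psi=\partial_e$, and the corollary will follow once we know that stretching produces rationally homogeneous LFIHDs, that de-stretching produces homogeneous ones, and that the two operations are mutually inverse.

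For the existence part, I would fix a root $e\in\rt\sigma$ with distinguished ray $\rho$, an integer $r$, and $\lambda\in\KK^{*}$. Since $\lambda$ is a unit it lies in $\ker\partial_e$ by Proposition~\ref{sec:LFIHD}~$(e)$, so $\lambda\partial_e$ is a homogeneous LFIHD of degree $e$. I would then take $\partial:=(\lambda\partial_e)_{[r]}$ and check Definition~\ref{sec:sit2.1} directly. Conditions $(i)$--$(iii)$ are immediate, and the only delicate axiom is the iterative relation $(iv)$: for indices $a=ip^{r}$, $b=jp^{r}$ it reduces to the congruence $\binom{(i+j)p^{r}}{ip^{r}}\equiv\binom{i+j}{i}\pmod p$, which is Lucas' theorem applied to the base-$p$ expansions shifted by $r$ digits, while for $a$ or $b$ not divisible by $p^{r}$ both sides vanish, the coefficient $\binom{a+b}{a}$ being divisible by $p$ (by Kummer's theorem) in the one remaining case $p^{r}\mid(a+b)$ but $p^{r}\nmid a$. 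Evaluating the exponential then gives
\[
e^{x\partial}(\chi^{m})=\sum_{i=0}^{\infty}(\lambda\partial_e)^{(i)}(\chi^{m})\,x^{ip^{r}}=\sum_{i=0}^{\infty}\binom{\langle m,\rho\rangle}{i}\lambda^{i}\chi^{m+ie}x^{ip^{r}},
\]
as claimed, and since $\lambda\neq 0$ and $\partial_e$ is non-trivial, so is $\partial$.

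For the converse, let $\partial$ be any non-trivial rationally homogeneous LFIHD, of bidegree $(e,p^{r})$. I would show that $\psi^{(i)}:=\partial^{(ip^{r})}$ defines a homogeneous LFIHD of degree $e$: the Leibniz rule for $\psi$ follows from that for $\partial$ once the terms with $p^{r}\nmid k$ are discarded, these being forced to vanish by condition $(ii)$ of rational homogeneity; local finiteness is inherited as $\psi$ is a subsequence of $\partial$; and the iterative relation again rests on the same congruence $\binom{(i+j)p^{r}}{ip^{r}}\equiv\binom{i+j}{i}$. By Theorem~\ref{sec:th3.5} we then have $\psi=\lambda\partial_{e'}$ for some root $e'\in\rt\sigma$ and some $\lambda\in\KK^{*}$; comparing the (well-defined) degrees of $\psi$ and $\lambda\partial_{e'}$ forces $e'=e$, so $e$ is a root and $\partial^{(ip^{r})}=\lambda^{i}\partial_e^{(i)}$. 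As $\partial^{(j)}=0$ for $p^{r}\nmid j$, this says exactly that $\partial=(\lambda\partial_e)_{[r]}$, the LFIHD produced in the first part.

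The main obstacle is the iterative axiom under (de-)stretching, that is, verifying that the binomial coefficients appearing in Definition~\ref{sec:sit2.1}~$(iv)$ transform correctly modulo $p$ when the indices are rescaled by $p^{r}$; this is where the arithmetic of binomial coefficients (Lucas' and Kummer's theorems) enters, and it is the only step that is not purely formal. All the rest is bookkeeping, so that once the stretching bijection is in place the corollary is, as asserted, an immediate translation of Theorem~\ref{sec:th3.5}.
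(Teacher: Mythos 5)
Your proof is correct and follows the route the paper intends: the paper states the corollary as an immediate consequence of Theorem~\ref{sec:th3.5} (with the existence direction already contained in Example~\ref{sec:ex3.2}), and your stretching/de-stretching bijection between homogeneous LFIHDs of degree $e$ and rationally homogeneous ones of bidegree $(e,p^{r})$ is precisely the reduction being invoked. The Lucas/Kummer verification of axiom $(iv)$ of Definition~\ref{sec:sit2.1} under rescaling of indices by $p^{r}$ is exactly the detail the paper leaves unwritten, and you have supplied it correctly.
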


In the next corollary, we generalize to the case of positive
characteristic some results in \cite[Section~2]{Li}.  See also
\cite[Corollary~3.5]{Ku} for a more general statement in the
characteristic zero case. The proofs are similar to those in \cite{Li}
so we omit them.

\begin{corollaire}
  Let $\sigma\subseteq N_{\RR}$ be a strongly convex rational, then the
  following hold.
  \begin{enumerate}[$(i)$]
  \item For any normalized up to a Frobenius map $\GA$-actions in
    $\spec \KK[\sigma^\vee_M]$ the algebra of invariants is finitely
    generated.
  \item There is a finite number of rationally homogeneous
    LFIHDs on $\KK[\sigma^{\vee}_{M}]$ with pairwise distinct
    kernels.
  \end{enumerate}
\end{corollaire}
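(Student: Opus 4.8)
The plan is to read off both assertions directly from the classification of rationally homogeneous LFIHDs obtained in Corollary~\ref{cor3.6}, combined with the kernel computation already carried out in Example~\ref{sec:ex3.2}. Recall that a $\GA$-action on $X_\sigma=\spec\KK[\sigma^\vee_M]$ that is normalized up to a Frobenius map corresponds, by definition, to a rationally homogeneous LFIHD $\partial$. By the converse part of Corollary~\ref{cor3.6}, every non-trivial such $\partial$ is the one attached to a triple $(e,r,\lambda)$ with $e\in\rt\sigma$, $r\in\ZZ_{\geq 0}$ and $\lambda\in\KK^*$, and it has some distinguished ray $\rho\in\sigma(1)$. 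The crucial observation, recorded in Example~\ref{sec:ex3.2}, is that the algebra of invariants of this LFIHD is exactly $\ker\partial=\KK[\rho^\star_M]$; in particular it depends neither on the Frobenius twist $r$ nor on the scalar $\lambda$, but only on the distinguished ray $\rho$.

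For assertion $(i)$ I would first dispose of the trivial action, whose algebra of invariants is the whole of $\KK[\sigma^\vee_M]$ and is finitely generated because $\sigma^\vee$ is a rational polyhedral cone. For a non-trivial action the algebra of invariants is $\KK[\rho^\star_M]$. Since $\rho^\star=\rho^\perp\cap\sigma^\vee$ is a face of $\sigma^\vee$, it is itself a rational polyhedral cone, so by Gordan's lemma the monoid $\rho^\star\cap M$ is finitely generated; hence its semigroup algebra $\KK[\rho^\star_M]$ is a finitely generated $\KK$-algebra. This settles $(i)$.

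For assertion $(ii)$ the point is that every kernel of a non-trivial rationally homogeneous LFIHD lies in the finite set $\{\KK[\rho^\star_M]\mid \rho\in\sigma(1)\}$, since a strongly convex rational cone has only finitely many extremal rays. Adjoining the single kernel $\KK[\sigma^\vee_M]$ coming from the trivial action still leaves a finite collection, which already proves finiteness. To pin down the exact count one checks that the assignment $\rho\mapsto\rho^\star$ is injective on $\sigma(1)$: as $\sigma^\vee$ is full dimensional, the extremal rays of $\sigma$ correspond bijectively to the facets $\rho^\star$ of $\sigma^\vee$, so the monoid $\rho^\star\cap M$ determines $\rho$, whence distinct rays yield pairwise distinct kernels.

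The main obstacle, and essentially the only nontrivial input, is the verification that the kernel of an \emph{arbitrary} rationally homogeneous LFIHD is genuinely $\KK[\rho^\star_M]$ in positive characteristic, independent of the twist $r$ and the scalar $\lambda$. This is precisely the content of the computation $\ker\partial_{e,r}=\KK[\rho^\star_M]$ in Example~\ref{sec:ex3.2}, which propagates to every rationally homogeneous LFIHD through the converse assertion of Corollary~\ref{cor3.6}. Once this is in hand, both finiteness statements follow immediately from Gordan's lemma and the finiteness of $\sigma(1)$, exactly as in the characteristic zero treatment of \cite[Section~2]{Li}.
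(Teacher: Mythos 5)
Your proof is correct and follows exactly the route the paper intends: the paper omits the argument, referring to the characteristic-zero treatment in [Li, Section~2], and that argument is precisely yours — read the kernel $\KK[\rho^\star_M]$ off the classification of Corollary~\ref{cor3.6} and Example~\ref{sec:ex3.2}, then invoke Gordan's lemma for finite generation and the finiteness of $\sigma(1)$ for the bound on the number of distinct kernels. The only (harmless) omission is the degenerate case $\sigma=0$, where $\KK[M]$ admits no non-trivial LFIHD by Proposition~\ref{sec:LFIHD}~$(e)$ and both assertions are vacuous.
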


\section{$\GA$-actions of vertical type} \label{sec4}

In this section, we classify normalized $\GA$-actions of vertical type
on an affine $\TT$-variety $X=\spec A$ of complexity one over a field
$\KK$.  See \cite{Li2} for higher complexity when the base field is
algebraically closed of characteristic zero.

To achieve our classification, we place ourselves in the context of
Section~\ref{sec1} by letting $A=A[C,\DD]$, where $C$ is a regular
curve over $\KK$ and $\DD = \sum_{z\in C}\Delta_{z}\cdot z$ is a
proper $\sigma$-polyhedral divisor. Hence,
\[
A[C,\DD] = \bigoplus_{m\in\sigma^{\vee}_{M}}A_m\cdot\chi^{m}\subseteq
K_{0}[M],\quad\mbox{where}\quad A_m=H^{0}\big(C,
\OO_{C}(\DD(m))\big)\mbox{ and } K_{0} = \KK(C)\,.
\]

The following result gives some general properties of homogeneous
LFIHDs on the $M$-graded algebra $A$. Recall that the affine
$\TT$-variety $X=\spec A$ is called elliptic if $A_0=\KK$.

\begin{lemme} \label{4.1}
  Let $\partial$ be a homogeneous $\rm LFIHD$ on $A$ of degree $e$.
  Then the following statements hold.
  \begin{enumerate}[$(i)$]
  \item If $\partial$ is vertical, then $e\not\in\sigma^{\vee}$ and
    $\ker\partial = \bigoplus_{m\in\tau_{M}}A_{m}\chi^{m}$ for some
    codimension one face $\tau$ of the cone $\sigma^{\vee}$. In
    particular, the algebra $\rm ker \,\it\partial$ is finitely
    generated.
  \item If $A$ is non-elliptic, then $\partial$ is vertical if and only
    if $e\not\in\sigma^{\vee}$.
  \end{enumerate}
\end{lemme}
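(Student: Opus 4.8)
The plan is to deduce $(i)$ from the toric computations of Section~\ref{sec3} carried out over the base field $K_{0}$, and then to derive $(ii)$ from $(i)$ together with an elementary argument on degrees. Throughout $\partial$ is non-trivial, as is implicit in the statement since it has a well-defined degree $e$; note also that $\sigma\neq\{0\}$, for otherwise every $\chi^{m}$ would be a unit and $\partial$ trivial by Proposition~\ref{sec:LFIHD}~$(e)$. To prove $(i)$, suppose $\partial$ is vertical. By Lemma~\ref{sec:lem2.12} it extends to a homogeneous locally finite iterative higher $K_{0}$-derivation $\partial'$ on $B:=K_{0}[\sigma^{\vee}_{M}]$, with $\partial'|_{A}=\partial$ and degree $e$. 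Since $B$ is the coordinate ring of the affine toric variety attached to $\sigma$ over $K_{0}$ and $\partial'$ is a non-trivial $K_{0}$-linear homogeneous LFIHD, Lemma~\ref{sec:lem3.3} applies verbatim over $K_{0}$ and shows that $e\in\rt\sigma$ is a Demazure root and that $\ker\partial'=K_{0}[\rho^{\star}_{M}]$, where $\rho\in\sigma(1)$ is its distinguished ray. As $\langle e,\rho\rangle=-1$ with $\rho\in\sigma$, this forces $e\notin\sigma^{\vee}$.

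Since $\partial=\partial'|_{A}$, one has $\ker\partial=A\cap\ker\partial'$; and as $\rho^{\star}$ is a face of $\sigma^{\vee}$, the monoid $\rho^{\star}_{M}$ lies in $\sigma^{\vee}_{M}$, so intersecting the two gradings yields $\ker\partial=\bigoplus_{m\in\tau_{M}}A_{m}\chi^{m}$ with $\tau:=\rho^{\star}$, the dual face of an extremal ray and hence a codimension one face of $\sigma^{\vee}$. For finite generation I would identify this kernel with a polyhedral-divisor algebra: letting $\pi\colon N_{\RR}\to N_{\RR}/\RR\rho$ be the projection along $\rho$, set $\sigma'=\pi(\sigma)$ and $\DD'=\sum_{z}\pi(\Delta_{z})\cdot z$. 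Using that $\rho$ is a face of the strongly convex cone $\sigma$, a short argument shows $\sigma'$ is again strongly convex with $(\sigma')^{\vee}=\tau$; and for $m\in\tau$ the form $\langle m,\cdot\rangle$ vanishes on $\rho$, hence descends to $N_{\RR}/\RR\rho$ and gives $\DD'(m)=\DD(m)$, so that $A_{m}=H^{0}(C,\OO_{C}(\DD'(m)))$. Therefore $\ker\partial=A[C,\DD']$, and once $\DD'$ is checked to be proper the structure theorem \cite[Theorem~4.3]{La2} exhibits it as the coordinate ring of an affine variety endowed with a complexity-one action of a quotient torus of $\TT$, which is in particular finitely generated.

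For $(ii)$, the implication $(\Rightarrow)$ is contained in $(i)$. For $(\Leftarrow)$, assume $A$ is non-elliptic and $e\notin\sigma^{\vee}$. Non-elliptic means $A_{0}\neq\KK$; since $A_{0}=H^{0}(C,\OO_{C}(\DD(0)))=H^{0}(C,\OO_{C})$, properness of $\DD$ forces $C$ to be affine, so $A_{0}=\KK[C]$ and $K_{0}=\fract(A_{0})$. For $a\in A_{0}$ and $i>0$, the element $\partial^{(i)}(a)$ is homogeneous of degree $ie$; as $\sigma^{\vee}$ is a cone and $e\notin\sigma^{\vee}$ we have $ie\notin\sigma^{\vee}_{M}$, so the degree-$ie$ piece of $A$ is zero and $\partial^{(i)}(a)=0$. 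Thus $A_{0}\subseteq\ker\partial$, and applying the localization formula of Lemma~\ref{sec:mult-syst} to the multiplicative system $A_{0}\setminus\{0\}\subseteq\ker\partial$ gives $\bar{\partial}^{(i)}(K_{0})=0$ for all $i>0$; that is, $\partial$ is vertical.

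The conceptual core, namely the reduction to the toric case and the degree argument in $(ii)$, is short. The step demanding genuine care is the finite generation in $(i)$: verifying that the projected divisor $\DD'$ is proper is immediate for $C$ affine, but in the elliptic case one must check both $\deg\DD'\subsetneq\sigma'$ and the principality condition on the $m$ with $\deg\DD'(m)=0$. I expect this properness verification, rather than the main structural argument, to be the technical heart of the proof.
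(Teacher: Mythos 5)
Your argument for the core of the lemma coincides with the paper's: for $(i)$ you extend $\partial$ to $K_0[\sigma^\vee_M]$ via Lemma~\ref{sec:lem2.12}, apply Lemma~\ref{sec:lem3.3} over the base field $K_0$ to get $e\in\rt\sigma$ (hence $e\notin\sigma^\vee$) and $\ker\bar\partial=K_0[\rho^\star_M]$, and intersect with $A$; for $(ii)$ you use $A_{ie}=\{0\}$ for $i>0$ and $K_0=\fract A_0$ exactly as the paper does. The only divergence is the finite-generation claim in $(i)$: the paper disposes of it in one line by citing \cite[Lemma~4.1]{AH} (a general finite-generation statement for face subalgebras of finitely generated multigraded algebras), whereas you propose to realize $\ker\partial$ explicitly as $A[C,\DD']$ for the polyhedral divisor $\DD'$ obtained by projecting $\DD$ along $\rho$ and then invoke the structure theorem.

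That last step has a genuine gap, which you partly anticipate: in the elliptic case the projected divisor $\DD'$ need not be proper. Indeed $\deg\DD\subsetneq\sigma$ is equivalent to $\deg\DD(m)>0$ for some $m\in\sigma^\vee$, but it can happen that $\deg\DD(m)=0$ for \emph{all} $m\in\tau=\rho^\star$ while $\deg\DD(m)>0$ only off $\tau$ (e.g.\ $\sigma$ the first quadrant, $\rho=(1,0)$, $\deg\DD=(1,0)+\sigma$): then $\deg\DD'=\pi(\deg\DD)=\sigma'$ and condition $(ii)$ of Definition~\ref{sec:def-convex} fails for $\DD'$. In that situation $\ker\partial$ degenerates to a semigroup-type algebra with one-dimensional graded pieces (it is still finitely generated, but not via the structure theorem applied to $\DD'$ over $C$), so your identification cannot be used as stated and you would need a separate argument for this degenerate case — or simply the citation of \cite[Lemma~4.1]{AH}, which covers all cases uniformly. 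Everything else in your write-up is correct and matches the paper.
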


\begin{proof}
  $(i)$ By Lemma~\ref{sec:lem2.12} we may extend $\partial$ to a
  homogeneous LFIHD $\bar{\partial}$ on the semigroup $K_{0}$-algebra
  $K_{0}[\sigma^{\vee}_{M}]$. By Lemma~\ref{sec:lem3.3} we have
  $e\in\rt\sigma$ and so $e\not\in\sigma^{\vee}$. Moreover, we obtain
  $\ker\bar{\partial} = K_{0}[\tau_{M}]$ for some codimension one face
  $\tau$ of $\sigma^{\vee}$.  Thus,
  \[
  \ker\partial = A\cap\ker\bar{\partial} =
  \bigoplus_{m\in\tau_{M}}A_{m}\chi^{m}.
  \]
  As a consequence of \cite[Lemma~4.1]{AH}, the algebra $\ker\partial$

  is finitely generated.
  $(ii)$ Assume that $A$ is non-elliptic and let $\bar{\partial}$ be
  the extension of $\partial$ on the $K_{0}$-algebra $K_{0}[M]$. If
  $e\not\in\sigma^{\vee}$, then for any $i\in\ZZ_{> 0}$ we have
  $\partial^{(i)}(A_{0}) = A_{ie} = \{0\}$. Since $K_{0} = \fract
  A_{0}$, we conclude that $\partial$ is vertical.
\end{proof}

As remarked in \cite[Remark~3.2]{Li}, in the elliptic case, the
$M$-graded algebra admits in general LFIHDs $\partial$ of horizontal
type satisfying $\deg\partial\not\in\sigma^{\vee}$.

In the following, we introduce some combinatorial data on $A=
A[C,\DD]$ in order to describe its vertical normalized $\GA$-actions.

\begin{notation}
  Let $e\in\rt\sigma$ be a root of $\sigma$ with distinguished ray
  $\rho$ and recall that $ \DD(e) = \sum_{z\in C}\min_{v\in
    \Delta_{z}(0)}\langle e, v\rangle\cdot z$. We denote by $\Phi_{e}$
  the $A_{0}$-module $H^{0}(C,\OO_{C}(\DD(e)))$. Furthermore, if
  $\varphi\in\Phi_{e}$ is a nonzero section, then for any vector
  $m\in\sigma^{\vee}$ belonging to $M_{\QQ}$ we have
  \begin{align}
    \label{eq:5}
    \divi\varphi \geq -\DD(e)\geq\DD(m) - \DD(m+e).
  \end{align}
\end{notation}

Starting with the previous combinatorial data, we may construct a
homogeneous LFIHD of vertical type, as follows:

\begin{lemme}\label{sec:lem4.3}
  Let $e\in \rt\sigma$ be a root of $\sigma$ with distinguished ray
  $\rho$ and let $\varphi\in \Phi_{e}$ be a section. Denote
  $\bar{\partial} = \varphi\,\partial_{e}$, where $\partial_{e}$ is
  the $\rm LFIHD$ on the $K_{0}$-algebra $K_{0}[\sigma^{\vee}_{M}]$
  corresponding to the root $e$ as in Example $3.2$. Then for any
  $i\in\ZZ_{\geq 0}$ we have $\bar{\partial}^{(i)}(A)\subseteq A$. Consequently,
  the sequence
  \[
  \partial_{e,\varphi} :=
  \left\{\bar{\partial}^{(i)}|_{A}:A\rightarrow A\right\}_{i\in\ZZ_{\geq 0}}
  \]
  defines a homogeneous $\rm LFIHD$ of vertical type on $A$.
\end{lemme}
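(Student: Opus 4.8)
The plan is to realize $\bar\partial=\varphi\,\partial_e$ as an LFIHD on the ambient $K_0$-algebra $B:=K_0[\sigma^{\vee}_M]$, to prove that the subalgebra $A\subseteq B$ is $\bar\partial$-invariant, and then to read off the asserted properties of $\partial_{e,\varphi}$ by restriction. First I would record that $\partial_e$, defined by $\partial_e^{(i)}(\chi^m)=\binom{\langle m,\rho\rangle}{i}\chi^{m+ie}$ and extended $K_0$-linearly, is a $K_0$-linear homogeneous LFIHD on $B$ of degree $e$ whose kernel is $K_0[\rho^{\star}_M]$ (Example~\ref{sec:ex3.2}) and in particular contains the constants $K_0=K_0\chi^0$. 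Since $\varphi\in K_0\subseteq\ker\partial_e$, the multiplication $\varphi\,\partial_e$ is again an LFIHD on $B$ (as observed for $f\partial$ in Section~\ref{sec2}), and by $K_0$-linearity
\[
\bar\partial^{(i)}(f\chi^m)=\binom{\langle m,\rho\rangle}{i}\,f\,\varphi^i\,\chi^{m+ie}
\]
for every homogeneous element $f\chi^m$ with $f\in K_0$.

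The heart of the argument, and the step I expect to be the main obstacle, is the containment $\bar\partial^{(i)}(f\chi^m)\in A$ for $f\chi^m\in A$, that is, $f\varphi^i\in A_{m+ie}=H^0(C,\OO_C(\DD(m+ie)))$ whenever the coefficient does not vanish. If $\langle m,\rho\rangle<i$ the integer $\binom{\langle m,\rho\rangle}{i}$ is zero and the term lies trivially in $A$. Otherwise $\langle m,\rho\rangle\geq i$, and the root property of $e$ forces $m+je\in\sigma^{\vee}_M$ for every $0\leq j\leq i$: indeed $\langle m+je,\rho\rangle=\langle m,\rho\rangle-j\geq 0$, while for each other ray $\rho'\in\sigma(1)$ we have $\langle m+je,\rho'\rangle\geq\langle m,\rho'\rangle\geq 0$ because $\langle e,\rho'\rangle\geq 0$.

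With these lattice points in hand, I would apply the inequality~\eqref{eq:5} to each $m+je$ with $0\leq j\leq i-1$, obtaining $\divi\varphi\geq\DD(m+je)-\DD(m+(j+1)e)$, and sum over $j$. The right-hand side telescopes, giving
\[
i\,\divi\varphi\geq\DD(m)-\DD(m+ie).
\]
Combining this with $\divi f\geq-\DD(m)$, which holds since $f\in A_m$, yields $\divi(f\varphi^i)+\DD(m+ie)=\divi f+i\,\divi\varphi+\DD(m+ie)\geq 0$, i.e. $f\varphi^i\in A_{m+ie}$, as desired. Summing over homogeneous components and using additivity of $\bar\partial^{(i)}$ then gives $\bar\partial^{(i)}(A)\subseteq A$ for all $i\in\ZZ_{\geq 0}$.

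Finally I would conclude as follows. Since $A$ is $\bar\partial$-invariant, the restrictions $\partial_{e,\varphi}^{(i)}:=\bar\partial^{(i)}|_A$ form a sequence of $\KK$-linear operators on $A$ inheriting the axioms of Definition~\ref{sec:sit2.1} from $\bar\partial$; local finiteness is automatic since $\bar\partial^{(i)}(f\chi^m)=0$ once $i>\langle m,\rho\rangle$. The degree shift by $ie$ shows $\partial_{e,\varphi}$ is homogeneous of degree $e$, and the $K_0$-linearity of $\bar\partial$ gives $\bar\partial^{(i)}(K_0)=\{0\}$ for all $i>0$, so $\partial_{e,\varphi}$ is of vertical type. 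I expect the convex-geometric telescoping in the previous two paragraphs to be the only substantive point; the verification of the LFIHD axioms is then purely formal.
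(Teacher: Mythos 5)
Your proof is correct and follows essentially the same route as the paper: both reduce the lemma to the key inequality $i\,\divi\varphi\geq\DD(m)-\DD(m+ie)$ and then conclude $f\varphi^{i}\in A_{m+ie}$, the paper obtaining this inequality from a single application of \eqref{eq:5} at $m/i$ together with positive homogeneity of $\DD$ (and a fractional-part argument for the rounding), while you telescope \eqref{eq:5} along the chain $m, m+e,\ldots,m+ie$ and invoke the $\QQ$-divisor description of $H^{0}(C,\OO_C(\DD(m+ie)))$. The remaining verifications (that $\varphi\,\partial_e$ is an LFIHD on $K_0[\sigma^{\vee}_M]$, homogeneity of degree $e$, local finiteness, and verticality via $K_0$-linearity) are exactly the parts the paper leaves to the reader, and you handle them correctly.
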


\begin{proof}
  Fix $i\in\ZZ_{>0}$ and let $f\in A_{m}$ be nonzero such that $\divi
  f + \lfloor \DD(m)\rfloor \rm \geq 0$. If
  $\partial^{(i)}(f\chi^{m})\neq 0$ and $\varphi\neq 0$, then by
  \eqref{eq:5} we have
  \begin{align*}
    \divi\left(\partial^{(i)}(f\chi^{m})/\chi^{m+ie}\right) + \lfloor
    \DD(m+ie)\rfloor &= i\divi \varphi + \divi f + \lfloor
    \DD(m+ie)\rfloor \\
    &\geq i(\DD(m/i) - \DD(m/i + e)) - \lfloor \DD(m)\rfloor + \lfloor
    \DD(m+ie)\rfloor \\
    &\geq \{\DD(m)\} - \{\DD(m+ie)\}.  \\
  \end{align*}
  
  Since the coefficients of the $\QQ$-divisor $\{\DD(m)\} -
  \{\DD(m+ie)\}$ belong to $]-1,1[$ we have
  \[
  \divi\left(\partial^{(i)}(f\chi^{m})/\chi^{m+ie}\right) + \lfloor \DD(m +
  ie)\rfloor \geq 0,
  \]
  proving that $A$ is $\partial$-invariant.  The rest of the proof is
  straightforward and left to the reader.
\end{proof}

Our next theorem gives a classification of normalized vertical
$\GA$-actions on an affine $\TT$-variety $X=\spec A[C,\DD]$ of
complexity one.

\begin{theorem} \label{sec:th4.4}
  Let $A = A[C,\DD]$. If $e\in \rt\sigma$ is a root of $\sigma$ with
  distinguished ray $\rho$ and $\varphi\in \Phi_{e}$ is a section,
  then $\partial_{e,\varphi}$ is a homogeneous vertical LFIHD on
  $A$. Conversely, every homogeneous vertical LFIHD on $A$ is of the
  form $\partial_{e, \varphi}$, where $e\in\rt\sigma$ and
  $\varphi\in\Phi_{e}$.
\end{theorem}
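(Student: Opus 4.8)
The first assertion is precisely Lemma~\ref{sec:lem4.3}, so the plan is to establish the converse. Let $\partial$ be a non-trivial homogeneous vertical LFIHD on $A$ of degree $e$ (the trivial case is $\varphi=0$). First I would invoke Lemma~\ref{sec:lem2.12} to extend $\partial$ to a homogeneous, $K_0$-linear LFIHD $\bar\partial$ on the semigroup algebra $K_0[\sigma^\vee_M]$. Regarding the latter as the coordinate ring of the affine toric variety over the field $K_0$ attached to $\sigma$, Theorem~\ref{sec:th3.5} applies over $K_0$ and produces a root $e\in\rt\sigma$ with distinguished ray $\rho$ and a scalar $\lambda\in K_0^*$ with $\bar\partial=\lambda\,\partial_e$; here Lemma~\ref{4.1}~$(i)$ confirms that this $e$ is the degree of $\partial$, that $e\notin\sigma^\vee$, and pins down $\ker\partial$ as the graded subalgebra over the codimension one face $\rho^\star$ (matching $\ker\bar\partial=K_0[\rho^\star_M]$). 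Setting $\varphi:=\lambda$, the sequences $\partial$ and $\partial_{e,\varphi}$ coincide by construction as soon as $\varphi\in\Phi_e$, so everything reduces to the membership $\lambda\in\Phi_e=H^0(C,\OO_C(\DD(e)))$.

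To prove $\lambda\in\Phi_e$, i.e.\ $\divi\lambda+\lfloor\DD(e)\rfloor\geq 0$, I would argue point by point. Since $\partial=\bar\partial|_A$ maps $A$ into $A$, for $f\in A_m$ the element $\bar\partial^{(1)}(f\chi^m)=\lambda\langle m,\rho\rangle\,f\chi^{m+e}$ lies in $A$; whenever $\langle m,\rho\rangle$ is a unit in $\KK$ we may divide by it and deduce the inclusion $\lambda\,A_m\subseteq A_{m+e}$. Now fix $z\in C$ and let $v\in\Delta_z(0)$ be a vertex realizing $h_z(e)=\langle e,v\rangle$. I would choose a lattice vector $m$ in the relative interior of the normal cone of $v$, with $\langle m,\rho\rangle$ a unit in $\KK$, large enough that $m+e$ still lies in that cone, and scaled so that $h_z(m)=\langle m,v\rangle\in\ZZ$; this produces the local additivity $h_z(m+e)=h_z(m)+h_z(e)$, while $m\in\relint\sigma^\vee$ forces $\langle m,\rho\rangle>0$. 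Taking $f\in A_m$ with $\ord_z f$ equal to the minimal possible value $-h_z(m)$ and feeding it into $\lambda\,A_m\subseteq A_{m+e}$ yields $\ord_z\lambda+\lfloor h_z(e)\rfloor\geq 0$. Ranging over all $z\in C$ gives exactly $\lambda\in\Phi_e$, whence $\partial=\partial_{e,\varphi}$.

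The step I expect to be the main obstacle is this local extraction, where the integer floors built into $A_m=H^0(C,\OO_C(\lfloor\DD(m)\rfloor))$ must be matched with the existence of a global section of minimal order at the prescribed point $z$. The combinatorial constraints on $m$ (lying in the open normal cone of $v$, having $\langle m,\rho\rangle$ a unit in $\KK$, and integral evaluation $h_z(m)$) are easily met, since the normal cone is full dimensional and $\langle\cdot,\rho\rangle$ is surjective on its lattice points; the real content is the existence of $f\in A_m$ with $\ord_z f=-h_z(m)$. When $C$ is affine the sheaf $\OO_C(\lfloor\DD(m)\rfloor)$ is globally generated and this is automatic, while when $C$ is projective one takes $m$ large in the normal cone so that $\DD(m)$ is base-point free at $z$ (a divisor of sufficiently large degree on a curve is globally generated), the requisite positivity of $\deg\DD(m)$ being supplied by the properness of $\DD$ in Definition~\ref{sec:def-convex}. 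Once such an $m$ and $f$ are secured, the bound $\ord_z\lambda+\lfloor h_z(e)\rfloor\geq 0$ drops out of $\divi(\lambda f)+\lfloor\DD(m+e)\rfloor\geq 0$ via $\ord_z f=-h_z(m)\in\ZZ$; the superadditivity $\DD(m+e)-\DD(m)\geq\DD(e)$ recorded in \eqref{eq:5} shows that this estimate is in fact sharp.
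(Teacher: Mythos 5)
Your proposal is correct and follows essentially the same route as the paper: reduce the forward direction to Lemma~\ref{sec:lem4.3}, extend $\partial$ to $K_0[\sigma^\vee_M]$ via Lemma~\ref{sec:lem2.12}, apply Theorem~\ref{sec:th3.5} over $K_0$ to get $\bar\partial=\varphi\,\partial_e$, and then verify $\varphi\in\Phi_e$ point by point by choosing, for each $z$, a lattice vector $m$ in the (full-dimensional) normal cone $\omega_z$ of the minimizing vertex with $m+e\in\omega_z$, $\langle m,\rho\rangle$ prime to the characteristic, and a section of minimal order at $z$ supplied by Riemann--Roch and properness. Your explicit insistence that $h_z(m)$ be integral (so the floors match exactly) is a small refinement of a point the paper leaves implicit, but the argument is the same.
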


\begin{proof}
  The direct implication corresponds to Lemma~\ref{sec:lem4.3}. To
  prove the converse statement, let $\partial$ be a non-trivial
  homogeneous vertical LFIHD on $A$. By Lemma~\ref{sec:lem2.12},
  $\partial$ extends to a locally finite iterative higher
  $K_{0}$-derivation $\bar{\partial}$ on the semigroup algebra
  $K_{0}[\sigma^{\vee}_{M}]$. By Theorem~\ref{sec:th3.5},
  $\bar{\partial}$ is given by $\varphi\partial_{e}$ as in
  Example~\ref{sec:ex3.2}, for some $\varphi\in K_{0}^{*}$ and some
  root $e\in\rt \sigma$.

  To conclude the proof, let us show that $\varphi\in\Phi_{e}$. Let
  $\rho$ be the distinguished ray of $e$. For every point $z\in C$ we
  let $v_{z}$ be a vertex of $\Delta_{z}$ where the minimum
  $\min_{v\in \Delta_{z}(0)}\langle e, v\rangle$ is achieved so
  that
  \[
  \DD(e) = \sum_{z\in C}\langle e, v_{z}\rangle\cdot z\,.
  \]
  For every $z\in C$ we let $ \omega_{z} = \{m\in\sigma^{\vee}\,|\,
  h_{\Delta_z}(m) = \langle m,v_{z}\rangle\}.$ The set
  $\omega_{z}\subseteq M_\RR$ is a full dimensional cone in $M_\RR$ (see
  \cite[Section~1]{AH}).

  Let also $m_{z}\in \sigma^{\vee}_{M} \setminus \rho^{\star}_{M}$ be
  a lattice vector such that $m_{z}$ and $m_{z} + e$ belong to
  $\omega_{z}$, $\deg\DD(m_{z})\geq g$ and $\langle
  m_{z},\rho\rangle\not\in p\ZZ$, where $p$ is characteristic of the
  field $\KK$ and $g$ the genus of the curve $C$. It is always
  possible to choose such $m_{z}$ since $\omega_{z}$ is full
  dimensional, the polyhedral divisor $\DD$ is proper, and the lattice
  vector $\rho$ is primitive. According to the Riemann-Roch Theorem we
  have $A_{m_{z}}\neq \{0\}$.

  Furthermore, the inclusion $
  \partial^{(1)}(A_{m_{z}}\chi^{m_{z}})\subseteq
  A_{m_{z}+e}\chi^{m_{z}+e} $ implies $\varphi A_{m_{z}}\subseteq
  A_{m_{z}+e}$. Consequently, for any $z\in C$ we have
  \[
  \divi\varphi \geq \DD(m_{z}) -\DD(m_{z} +e)\,.
  \]
  The coefficient of the divisor $\DD(m_{z}) -\DD(m_{z} +e)$ at the
  point $z\in C$ is $-\langle v_{z}, e\rangle$. Thus, $ \divi
  \varphi\geq -\DD(e)$ and we have $\varphi\in\Phi_{e}$, as required.
\end{proof}

In analogy with the toric case, the family of vertical normalized
$\GA$-actions on $X =\spec A$ having pairwise distinct kernels is a
finite set. The next result provides a combinatorial criterion for $A$
to admit a homogeneous non-trivial LFIHD of vertical type.

\begin{corollaire}\label{cor:4.5}
  Let $A = A[C,\DD]$ and let $\rho\subseteq\sigma$ be an extremal
  ray. Then, the $M$-graded algebra $A$ admits a non-trivial vertical
  homogeneous LFIHD such that the distinguished ray of
  $e=\deg\partial\in\rt\sigma$ is $\rho$ if and only if one of the
  following conditions holds.
  \begin{enumerate}[$(i)$]
  \item $C$ is affine, or
  \item $C$ is projective and $\rho\cap \deg\DD = \emptyset$.
  \end{enumerate}
\end{corollaire}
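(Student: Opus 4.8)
The plan is to reduce everything to Theorem~\ref{sec:th4.4}: a non-trivial homogeneous vertical LFIHD on $A$ whose degree $e$ has distinguished ray $\rho$ is exactly a pair $(e,\varphi)$ with $e\in\rt\sigma$ having distinguished ray $\rho$ and $0\neq\varphi\in\Phi_e=H^0(C,\OO_C(\DD(e)))$. Since every extremal ray is the distinguished ray of at least one root (as recalled just after the definition of root), the whole question becomes: \emph{is there a root $e$ with distinguished ray $\rho$ such that $\Phi_e\neq 0$?} If $C$ is affine this always holds, because the coordinate ring of a regular affine curve is a Dedekind domain, so for any $\QQ$-divisor $D$ one finds a nonzero $f$ vanishing to high enough order at the finitely many points where $\lfloor D\rfloor$ is negative, whence $H^0(C,\OO_C(D))\neq 0$. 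This settles case~(i), and from now on I assume $C$ projective.

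The projective case rests on two convex-geometric facts, which I would isolate first. Writing $c_z=[\kappa_z:\KK]$ and $\deg\DD=\sum_z c_z\Delta_z$, for a root $e$ the number $\deg\DD(e)=\sum_z c_z h_z(e)$ equals $\min_{v\in(\deg\DD)(0)}\langle e,v\rangle$: if $v_z$ is a vertex of $\Delta_z$ realizing $h_z(e)$, then $P_0:=\sum_z c_z v_z$ is a vertex of the Minkowski sum $\deg\DD$ minimizing $\langle e,\cdot\rangle$, with $\langle e,P_0\rangle=\deg\DD(e)$. Secondly, $\rho\cap\deg\DD=\emptyset$ if and only if there is $m\in\rho^\star_M$ with $\deg\DD(m)>0$. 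For this I use the facet presentation $\deg\DD=\{w:\langle m_k,w\rangle\geq a_k\}$ with primitive $m_k\in\sigma^\vee_M$ and $a_k=\deg\DD(m_k)$: a point $t\rho$ with $t\geq 0$ lies in $\deg\DD$ iff $t\langle m_k,\rho\rangle\geq a_k$ for all $k$, and since $\langle m_k,\rho\rangle\geq 0$ this one-variable system is solvable for large $t$ \emph{unless} some facet with $m_k\in\rho^\perp$ (hence $m_k\in\rho^\star$) has $a_k>0$. For the implication ``$\rho\cap\deg\DD=\emptyset\Rightarrow$ existence'' I then take such an $m_0\in\rho^\star_M$ and a root $e_0$ with distinguished ray $\rho$, and set $e_n=e_0+n m_0$; each $e_n$ is again a root with distinguished ray $\rho$, and from $\DD(m)+\DD(m')\leq\DD(m+m')$ together with homogeneity one gets $\deg\DD(e_n)\geq\deg\DD(e_0)+n\,\deg\DD(m_0)\to+\infty$. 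As $\DD(e_n)$ is supported on the fixed finite set $\{z:\Delta_z\neq\sigma\}$, the integral degree $\deg\lfloor\DD(e_n)\rfloor$ also tends to $+\infty$, so for $n$ large it exceeds $2g-2$ and Riemann--Roch gives $\Phi_{e_n}\neq 0$.

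For the converse I would prove the contrapositive: if $\rho\cap\deg\DD\neq\emptyset$, then $\Phi_e=0$ for \emph{every} root $e$ with distinguished ray $\rho$. Let $t_0=\min\{t\geq 0:t\rho\in\deg\DD\}$. Properness (Definition~\ref{sec:def-convex}) gives $\deg\DD\subsetneq\sigma$, forcing $t_0>0$, since $0\in\deg\DD$ would yield $\sigma=0+\sigma\subseteq\deg\DD$. Decompose $t_0\rho=w+s$ with $w\in\conv((\deg\DD)(0))$ and $s\in\sigma$, and split $s=b\rho+s'$ with $b\geq 0$ and $s'\in\cone(\sigma(1)\setminus\{\rho\})$. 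If $b>0$ then $(t_0-b)\rho=w+s'\in\deg\DD\subseteq\sigma$, so strong convexity gives $t_0-b\geq 0$, contradicting minimality of $t_0$; hence $b=0$. The root inequalities then give $\langle e,s'\rangle\geq 0$, so $-t_0=\langle e,t_0\rho\rangle=\langle e,w\rangle+\langle e,s'\rangle\geq\langle e,w\rangle\geq\deg\DD(e)$, using the vertex description from the previous step. Thus $\deg\DD(e)\leq -t_0<0$, so $\deg\lfloor\DD(e)\rfloor<0$ and $\Phi_e=0$, and by Theorem~\ref{sec:th4.4} no non-trivial vertical LFIHD with distinguished ray $\rho$ exists.

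The main obstacle is precisely this last step: converting the set-theoretic condition $\rho\cap\deg\DD\neq\emptyset$ into the \emph{strict} inequality $\deg\DD(e)<0$. Because $\langle e,\cdot\rangle$ is unbounded below on $\deg\DD$, one cannot take a naive global minimum; the argument must exploit the minimality of $t_0$ and the inclusion $\deg\DD\subseteq\sigma$ to annihilate the $\rho$-component of the recession part $s$, and then the defining inequalities of a Demazure root to control the remaining recession directions. Properness is exactly what excludes the degenerate possibility $0\in\deg\DD$ (i.e. $t_0=0$), which would otherwise permit $\deg\DD(e)=0$ and a spurious section; keeping track of this borderline case is the part requiring the most care.
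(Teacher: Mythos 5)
Your proof is correct and follows essentially the same route as the paper's: reduce to Theorem~\ref{sec:th4.4}, settle the affine case by the existence of global sections on affine curves, and in the projective case use Riemann--Roch on the roots $e_0+nm_0$ with $m_0\in\rho^\star_M$ for the forward direction and the strict inequality $\deg\DD(e)<0$ for the converse; your contribution is to supply the justifications the paper leaves implicit, namely the facet-normal characterization of $\rho\cap\deg\DD=\emptyset$ and the $t_0$-minimality argument producing $\deg\DD(e)\leq -t_0<0$. One small remark: your parenthetical claim that $P_0=\sum_z c_z v_z$ is itself a vertex of $\deg\DD$ is neither justified nor needed --- everywhere you only use the inequality $\langle e,w\rangle\geq\deg\DD(e)$ for $w$ in the convex hull of the vertices of $\deg\DD$, which follows from the (correct) fact that every vertex of a Minkowski sum decomposes as a sum of vertices of the summands.
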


\begin{proof}
  If $C$ is an affine curve, then every divisor on $C$ has a global
  nonzero section and so for any $e\in\rt\sigma$ we have $\dim\Phi_{
    e}>0$. In this case, the corollary follows from
  Theorem~\ref{sec:th4.4}.

  Assume that $C$ is projective and fix a root $e\in\rt\sigma$ with
  distinguished ray $\rho$.  Let us notice that for any
  $m\in\rho^{\star}_{M}$ we have $e + m\in\rt\sigma$. Furthermore
  \[
  \DD(e + m)\geq \DD(m) + \DD(e)\quad\mbox{and so}\quad \deg\DD(m +
  e)\geq \deg\DD(m) + \deg\DD(e)\,.
  \]
  Hence, if $\rho\cap \deg\DD = \emptyset$, then we have $\dim\Phi_{e +
    m}>0$ for some $m\in\rho^{\star}_{M}$, by the Riemann-Roch Theorem
  and by the properness of $\DD$.

  Conversely, assume that $\rho\cap \deg\DD\neq \emptyset$.  Since we
  have $\langle e, \rho\rangle = - 1$, there exists a vertex $v$ of
  $\deg\DD$ such that $\langle e, v\rangle < 0$ and therefore
  $\deg\DD(e) < 0$. Under these latter conditions we have
  $\dim\Phi_{e}\rm = 0$. Again, we conclude by
  Theorem~\ref{sec:th4.4} in the case where $C$ is projective.
\end{proof}

\begin{exemple}
  Let the notation be as in Example~\ref{ex:1.8}. Let $\rho$ be the
  ray of $\sigma$ spanned by $(1,0)$ and let $\rho'$ be the ray of
  $\sigma$ spanned by $(0,1)$.  We have $\deg\DD\cap \rho\neq
  \emptyset$ and $\deg\DD\cap \rho'= \emptyset$. Hence,
  Corollary~\ref{cor:4.5} shows that only $\rho'$ can be the
  distinguished ray of the degree $e$ of an LFIHD $\partial$ of
  vertical type.
\end{exemple}

\section{$\GA$-actions of horizontal type} \label{sec5}

The purpose of this section is to classify all horizontal
$\GA$-actions on affine $\TT$-varieties of complexity one over a
perfect field in terms of polyhedral divisors. The reader may consult
\cite[Section~3.2]{Li} for the case where $\KK$ is algebraically
closed and of characteristic zero.  Let as before $A=A[C,\DD]$, where
$C$ is a regular curve over $\KK$ and $\DD = \sum_{z\in
  C}\Delta_{z}\cdot z$ is a proper $\sigma$-polyhedral divisor. Hence,
\[
A[C,\DD] = \bigoplus_{m\in\sigma^{\vee}_{M}}A_m\cdot\chi^{m}\subseteq
K_{0}[M],\quad\mbox{where}\quad A_m=H^{0}\big(C,
\OO_{C}(\DD(m))\big)\mbox{ and } K_{0} = \KK(C)\,.
\]

In this section, several results will require the assumption that
$\KK$ is perfect so the classification will only hold in this
case. Nevertheless, the statements that we can prove without asking
for $\KK$ to be perfect are stated in general.

According to the Rosenlicht Theorem \cite{Ro}, in the case where $\KK$
is algebraically closed, the following lemma implies in particular
that an affine horizontal $\TT$-variety of complexity one has an open
orbit for its corresponding $\TT\ltimes\GA$-action.

\begin{lemme}\label{lm:5.1}
  Let $X=\spec A$, where $A=A[C,\DD]$ and let $\partial$ be a
  homogeneous LFIHD on $A$. Then $\partial$ is horizontal if and only
  if $\KK(X)^{\GA} \cap\KK(X)^{\TT}=\KK$. 
\end{lemme}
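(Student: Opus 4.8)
The plan is to translate both sides of the equivalence into a single statement about the intermediate field $L:=\KK(X)^{\GA}\cap\KK(X)^{\TT}$ and then to prove a rigidity dichotomy for it. Write $K_0=\KK(X)^{\TT}=\KK(C)$, and recall that $\KK(X)^{\GA}=\fract(\ker\partial)$ consists exactly of the rational functions annihilated by all $\bar\partial^{(i)}$ with $i>0$, where $\bar\partial$ is the extension of $\partial$ to $\KK(X)=\fract(A)$ furnished by Lemma~\ref{sec:mult-syst} applied to the multiplicative set $A\setminus\{0\}$. By the very definition of vertical type, $\partial$ is vertical precisely when $\bar\partial^{(i)}(K_0)=\{0\}$ for all $i\in\ZZ_{>0}$, i.e.\ precisely when $K_0\subseteq\KK(X)^{\GA}$, that is $L=K_0$. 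Hence $\partial$ is horizontal if and only if $L\subsetneq K_0$. Since $\KK\subseteq L\subseteq K_0$ always holds, the lemma will follow once I show that $L$ can only equal $\KK$ or $K_0$.

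The key input I would use is that $\KK(X)^{\GA}$ is algebraically closed in $\KK(X)$. I favour the standard geometric argument, valid in every characteristic: the field of invariants of a connected algebraic group is relatively algebraically closed. Indeed, if $\alpha\in\KK(X)$ is a root of a monic polynomial with coefficients in $\KK(X)^{\GA}$, then the $\GA$-orbit of $\alpha$ lies among the finitely many roots of that polynomial; pulling $\alpha$ back along $\phi\colon\GA\times X\to X$ produces an element of $\KK(\GA\times X)=\KK(X)(t)$ that is algebraic over $\KK(X)$, hence lies in $\KK(X)$ because $\KK(X)$ is algebraically closed in the purely transcendental extension $\KK(X)(t)$, and this says exactly that $\alpha$ is $\GA$-invariant. (Algebraically, the same conclusion is consistent with the factorial closedness of $\ker\partial$ in $A$ recorded in Proposition~\ref{sec:LFIHD}.) Intersecting with $K_0$, I then get that $L$ is algebraically closed in $K_0$: any $\alpha\in K_0$ algebraic over $L\subseteq\KK(X)^{\GA}$ is algebraic over $\KK(X)^{\GA}$, hence lies in $\KK(X)^{\GA}\cap K_0=L$.

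To finish the dichotomy I would invoke that $K_0=\KK(C)$ has transcendence degree one over $\KK$ (complexity one) and that $\KK$ is algebraically closed in $K_0$, the latter because $\KK$ is algebraically closed in $\KK(X)\supseteq K_0$. Thus $\trdeg_{\KK}L\in\{0,1\}$. If $\trdeg_{\KK}L=0$, then $L$ is algebraic over $\KK$, so $L\subseteq\KK$ and therefore $L=\KK$. If $\trdeg_{\KK}L=1$, then $K_0$ is algebraic over $L$, and since $L$ is algebraically closed in $K_0$ this forces $L=K_0$. Combining this with the first paragraph yields that $\partial$ is horizontal iff $L\subsetneq K_0$ iff $L=\KK$, which is the assertion.

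The main obstacle is the second paragraph, namely the relative algebraic closedness of $\KK(X)^{\GA}$ in $\KK(X)$; the rest is bookkeeping with transcendence degrees. The delicate point is to keep this step genuinely characteristic-free, which is precisely why I would use the connectedness argument rather than differentiating an algebraic relation directly: the latter runs into inseparability phenomena in characteristic $p$ (a nonzero relation can have identically vanishing derivative), whereas the orbit argument avoids them. This is also why the proof, like the statement, requires no perfectness hypothesis.
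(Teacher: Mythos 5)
Your proof is correct, and its skeleton matches the paper's: both set $L=\KK(X)^{\GA}\cap\KK(X)^{\TT}$, observe that horizontality amounts to $L\subsetneq K_0$, and derive $L=\KK$ from a transcendence-degree dichotomy together with the standing convention that $\KK$ is algebraically closed in $\KK(X)$. The one genuine divergence is in the key sub-claim that $K_0$ algebraic over $L$ forces $K_0\subseteq\KK(X)^{\GA}$. The paper proves it ring-theoretically: for $F\in K_0$ algebraic over $L\subseteq\fract(\ker\partial)$ it chooses $a\in\ker\partial$ with $aF$ integral over $\ker\partial$, uses normality of $A$ to place $aF$ in $A$, and then applies Proposition~\ref{sec:LFIHD}~$(b)$ (algebraic closedness of $\ker\partial$ in $A$) to get $aF\in\ker\partial$, whence $F\in\KK(X)^{\GA}$. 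You instead prove directly that $\KK(X)^{\GA}$ is relatively algebraically closed in $\KK(X)$, via the connectedness of $\GA$ and the fact that $\KK(X)$ is algebraically closed in the purely transcendental extension $\KK(X)(t)$. Both mechanisms are characteristic-free and both are valid; the paper's has the advantage of never leaving the LFIHD formalism already set up in Section~\ref{sec2}, while yours isolates a cleaner general principle (invariant fields of connected group actions are relatively algebraically closed) at the cost of leaning on the identification of $\fract(\ker\partial)$ with the field of $\GA$-invariant rational functions, which the paper asserts in the introduction but does not re-derive at this point. The converse direction is, as you note, immediate from the definitions in both treatments.
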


\begin{proof}
  Let $L = \KK(X)^{\GA} \cap\KK(X)^{\TT}$.  Assume that $\partial$ is
  horizontal and that $\KK(X)^{\TT}/L$ is an algebraic field
  extension.  Consider $F\in \KK(X)^{\TT}$ a nonzero invariant
  rational function. Remarking that $\KK(X)^{\GA}$ is the field of
  fractions of the ring $\ker\partial$, we can find $a\in
  \ker \partial$ such that $aF$ is integral over $\ker\partial$.
  Since $A$ is normal, $aF\in A$, and by Proposition $2.4(b)$ we have
  $aF\in\ker\partial$.  Hence $F\in \KK(X)^{\GA}$, contradicting the
  fact that $\partial$ is of horizontal type. Since $\KK(X)^\TT/\KK$ is of
  transcendence degree one, we have that $L/\KK$ is algebraic. By our
  convention $\KK$ is algebraically closed in $\KK(X)$ which yields
  $L=\KK$. The converse implication follows directly from the
  definition of horizontal and vertical LFIHDs.
\end{proof}

Our next lemma shows that the existence of a homogeneous LFIHD on the
algebra $A = A[C,\DD]$ imposes some restrictions on the curve $C$. We
refer the reader to \cite[3.5]{FZ2}, \cite[3.16]{Li} for the case
where the base field is algebraically closed of characteristic zero.

\begin{lemme} \label{lem:5.2}
  Assume that $A=A[C,\DD]$ admits a homogeneous LFIHD $\partial$ of
  horizontal type. Consider $\omega$ (resp. $L$ ) the cone
  (resp. sublattice) generated by the weights of $\ker\partial$ and
  let $\omega_{L} = \omega\cap L$.  Then the following statements
  hold.
  \begin{enumerate}[$(i)$]
  \item The kernel of $\partial$ is a semigroup algebra, i.e.,
    \[
    \ker \partial =
    \bigoplus_{m\in\omega_{L}}\KK\cdot\varphi_{m}\chi^{m},\quad\mbox{where}\quad
    \varphi_{m}\in \KK(C)^{*}\,.
    \]

  \item We have $C\simeq \PP^{1}_{\KK}$, in the case where $A$
    is elliptic.

  \item If $\KK$ is perfect, then $C\simeq \AF^{1}_{\KK}$ in the
    case where $A$ is non-elliptic.
  \end{enumerate}
\end{lemme}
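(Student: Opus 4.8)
The plan is to treat the three assertions in order, deducing (ii) and (iii) from (i) together with the horizontal hypothesis.

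For (i), since $\partial$ is homogeneous the kernel $\ker\partial$ is an $M$-graded subalgebra of $A$, say $\ker\partial=\bigoplus_{m\in S}(\ker\partial)_m$ with $S\subseteq\sigma^{\vee}_M$ its weight monoid. First I would show that each graded piece is at most one-dimensional over $\KK$: if $f\chi^{m}$ and $g\chi^{m}$ both lie in $\ker\partial$ with $f,g\in K_{0}^{*}$, then $f/g\in K_{0}=\KK(X)^{\TT}$ and also $f/g\in\fract(\ker\partial)=\KK(X)^{\GA}$, so by Lemma~\ref{lm:5.1} we get $f/g\in\KK$. Hence $\ker\partial=\bigoplus_{m\in S}\KK\cdot\varphi_{m}\chi^{m}$ and it remains to prove $S=\omega_{L}:=\omega\cap L$. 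The inclusion $S\subseteq\omega_{L}$ is clear. For the reverse, take $m\in\omega\cap L$. Writing $m\in\cone(S)$ via Carath\'eodory as a nonnegative combination of $\QQ$-linearly independent elements of $S$ and using $m\in L$, the coefficients are rational, so $Nm\in S$ for some $N\in\ZZ_{>0}$. Since $m\in L$ I may write $m=m_{1}-m_{2}$ with $m_{1},m_{2}\in S$ and set $t=(\varphi_{m_{1}}/\varphi_{m_{2}})\chi^{m}\in\fract(\ker\partial)$; then $t^{N}$ and the generator $\varphi_{Nm}\chi^{Nm}\in\ker\partial$ differ by a factor in $K_{0}\cap\fract(\ker\partial)=\KK^{*}$ (Lemma~\ref{lm:5.1}), so $t^{N}\in\ker\partial$. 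As $A$ is normal, $t$ is integral over $A$ and lies in $\fract(A)$, hence $t\in A$; since $\ker\partial$ is algebraically closed in $A$ (Proposition~\ref{sec:LFIHD}$(b)$), integrality of $t$ over $\ker\partial$ forces $t\in\ker\partial$, whence $m\in S$. This proves (i); moreover $\ker\partial$ is a (twisted) affine semigroup algebra on the saturated monoid $\omega_{L}$, hence normal and finitely generated.

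For (ii) and (iii) the strategy is to establish that $C$ is rational and then identify its model. Since $\GA$-torsors over a field are trivial, the generic $\GA$-orbit yields $\KK(X)=L_{0}(\theta)$ with $L_{0}:=\KK(X)^{\GA}$ and $\theta$ transcendental over $L_{0}$. By Lemma~\ref{lm:5.1} we have $K_{0}\cap L_{0}=\KK$, so the compositum $K_{0}L_{0}$ is an intermediate field of $L_{0}(\theta)/L_{0}$ of transcendence degree one; L\"uroth's theorem (valid in every characteristic) gives $K_{0}L_{0}=L_{0}(\psi)$, so $C\times_{\KK}\spec L_{0}$ is $L_{0}$-rational and $C$ has geometric genus zero.

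The crux, and where I expect the real work, is upgrading geometric rationality to the precise models. Here I would show that $K_{0}$ embeds into the rational slice field $\KK(\theta)$ itself: combining the $\TT$-invariance of elements of $K_{0}$ with the homogeneity of $\partial$ and, in characteristic $p$, the normal form for LFIHDs on a polynomial line from Proposition~\ref{sec:LFIHD}$(d)$, one pins the $\GA$-action to a standard translation action in which $K_{0}\subseteq\KK(\theta)$. L\"uroth then gives $K_{0}=\KK(s)$ as a rational function field over $\KK$ \emph{with} a rational place, so the regular projective model of $K_{0}$ is $\PP^{1}_{\KK}$; in the elliptic case $C$ is this projective model, which is (ii). In the non-elliptic case $C=\spec A_{0}$ is the affine curve with $\KK(C)=\KK(s)$, and here the hypothesis that $\KK$ is perfect is essential: it guarantees that $C$ is geometrically integral and that the smooth affine rational curve $C$ has a single rational place at infinity, giving $C\simeq\AF^{1}_{\KK}$. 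The main obstacle is precisely this identification step—securing the embedding $K_{0}\hookrightarrow\KK(\theta)$ and the exact affine model $\AF^{1}$ rather than a proper open subset of $\PP^{1}$—which is where both the horizontal normal form and the perfectness hypothesis enter.
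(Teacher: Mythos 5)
Your argument for $(i)$ is correct and is essentially the paper's argument with the saturation step written out: one-dimensionality of the graded pieces of $\ker\partial$ comes from Lemma~\ref{lm:5.1}, and $S=\omega_L$ follows because $\ker\partial$ is integrally closed in the normal ring $A$ (Proposition~\ref{sec:LFIHD}).

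For $(ii)$ and $(iii)$ there are genuine gaps. In $(ii)$ you never use the decisive consequence of $(i)$: since $\ker\partial$ is a (twisted) semigroup algebra on $\omega_L$, its fraction field $E=K^{\GA}$ is \emph{purely transcendental over $\KK$}, hence so is $K=E(x)$, and therefore $C$ is unirational \emph{over $\KK$}; Lüroth for curves then yields $C\simeq\PP^1_\KK$, rational point included. You instead only obtain rationality of $C$ after extending scalars to $L_0$, i.e.\ geometric genus zero, which over a non-closed field does not determine $C$ (a conic without rational points has genus zero); the ``crux'' step you defer to --- an embedding $K_0\hookrightarrow\KK(\theta)$ obtained by ``pinning the action to a standard translation'' --- is precisely what would have to supply the missing rational point, and it is asserted rather than proved. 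In $(iii)$, knowing that $C$ is a smooth affine curve with rational function field does not give $C\simeq\AF^1_\KK$: one must exclude $\PP^1$ minus two or more points, and then descend the resulting isomorphism from $\bar\KK$ to $\KK$. The paper does this by extending $\partial$ to $B=A\otimes_\KK\bar\KK$ (perfectness ensures $B$ is still a normal domain, Lemma~\ref{lem1.8}), applying the geometric argument of \cite[Lemma~3.16]{Li} over $\bar\KK$ --- in essence $B^*\subseteq\ker\partial_{\bar\KK}$ forces $B_0^*=\bar\KK^*$, so only one place of $\PP^1_{\bar\KK}$ is missing --- and then invoking the triviality of forms of the affine line over a perfect field \cite{Ru,As} to pass from $B_0\simeq\bar\KK[t]$ to $A_0\simeq\KK[t]$. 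Your assertion that $C$ ``has a single rational place at infinity'' is exactly the statement that needs proof, and perfectness enters through this descent, not merely through geometric integrality.
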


\begin{proof}
  $(i)$ Let $a,a'\in\ker\partial\setminus\{0\}$ be homogeneous
  elements of the same degree. By Lemma~\ref{lm:5.1}, we have
  $a/a'\in \KK(X)^{\GA}\cap\KK(X)^{\TT} = \KK$. Thus $\ker\partial$ is
  a semigroup algebra. By Proposition~\ref{sec:LFIHD}~$(b)$ we have
  that $\ker\partial$ is integrally closed, hence normal. This yields
  $(i)$.

  $(ii)$ Let $K = \fract A$ and consider $E = K^{\GA}$.  By
  \cite[Lemma~2.2]{CM} there exists a variable $x$ over the field $E$
  such that $E(x) = K$. By $\rm (i)$, the extension $E/\KK$ is purely
  transcendental and so is $K/\KK$. Since $\KK(C)\subseteq K$, the
  regular projective curve $C$ is unirational. According to the
  Lur\"oth Theorem, it follows that $C\simeq \PP^{1}_{\KK}$.

  $(iii)$ Assume that $A$ is non-elliptic.  Let $\bar{\KK}$ be an
  algebraic closure of $\KK$, so that the field extension
  $\bar{\KK}/\KK$ is separable.  Let $B$ be the algebra
  $A\otimes_{\KK}\bar{\KK}$. Then $B$ is a normal finitely generated
  $M$-graded domain (see Lemma~\ref{lem1.8}).  Note th at the graded
  piece $B_{0}$ is $A_{0}\otimes_{\KK}\bar{\KK}$.  Consequently,
  $\partial$ extends to a homogeneous LFIHD of horizontal type on the
  $\bar{\KK}$-algebra $B$. Now, we can apply the geometrical argument
  in \cite[Lemma~3.16]{Li} to conclude that we have $B_0 \simeq
  \bar{\KK}[t]$, for some variable $t$ over $\bar{\KK}$. By
  separability of $\bar{\KK}/\KK$, this yields $A_{0}\simeq \KK[t]$
  (see e.g. \cite{Ru,As}).
\end{proof}

The preceding lemma implies that the kernel of a horizontal
homogeneous LFIHD on $A$ is finitely generated. This result can be
obtained independently from \cite[Theorem~1.3]{Ku} in the
characteristic zero case. Note also that the kernel of a non-trivial
LFIHD on a normal unirational surface $V$ over a perfect field $\KK$
such that $\KK[V]^*=\KK^*$ is a polynomial algebra (see
\cite[Theorem~2]{Nak78}).

\begin{rappel} \label{sit:5.3}
  In view of the above results, in the following we let $C =
  \AF^{1}_{\KK}$ or $C = \PP^{1}_{\KK}$. Assume that $A$ has a
  homogeneous LFIHD $\partial$ of horizontal type and let
  \[
  \ker \partial = \bigoplus_{m\in\omega_{L}}\KK\cdot\varphi_{m}\chi^{m}
  \]
  be the kernel of $\partial$. We also assume that $\KK(C) = \KK(t)$
  for some local parameter $t$ and, when $C$ is affine, we let $\KK[C]
  = \KK[t]$ be its coordinate ring.
\end{rappel}

\begin{lemme} \label{lm:5.4}
  Keeping the notation as above, the following statements hold.
  \begin{enumerate}[$(i)$]
    
  \item If $C = \AF^1_{\KK}$, then for any $m\in\omega_{L}$ we have
    $\divi \varphi_{m} + \DD(m) = 0$.

  \item Assume that $C = \PP^1_{\KK}$. Then there exists a point
    $z_{\infty}\in C$ such that for any $m\in\omega_{L}$ the effective
    $\QQ$-divisor $\divi \varphi_{m} + \DD(m)$ has at most
    $z_{\infty}$ in its support.
    
  \item The cone $\omega$ is a maximal cone of the quasifan
    $\Lambda\left(\DD\right)$ (see Definition~\ref{sec:def-convex}) in
    the non-elliptic case, and of
    $\Lambda(\DD|_{\PP^{1}_{\KK}\setminus\{z_{\infty}\}})$ for the elliptic
    case.
    
  \item The rank of the lattice $L$ is equal to $n = \rank M$. The
    lattice $M$ is spanned by $e:=\deg\partial$ and $L$. Furthermore,
    if $d$ is the smallest positive integer such that $de\in L$, then
    we can write every vector $m\in M$ in an unique way as $m = l+re$
    for some $l\in L$ and some $r\in\ZZ$ such that $0\leq r< d$.
    
  \item If $\KK$ is perfect, then the point $z_{\infty}$ in $(ii)$ is
    rational, i.e., the residue field of $z_{\infty}$ is $\KK$.
  \end{enumerate}
\end{lemme}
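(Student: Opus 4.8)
The plan is to handle the five assertions in a single sweep, exploiting throughout that the kernel $\ker\partial=\bigoplus_{m\in\omega_{L}}\KK\cdot\varphi_{m}\chi^{m}$ is factorially closed and normal (Proposition~\ref{sec:LFIHD}~$(a),(b)$), has the same units as $A$, and satisfies $\ker\partial\cap A_{0}=\KK$: indeed $A_{0}\subseteq\KK(X)^{\TT}$ and $\ker\partial\subseteq\KK(X)^{\GA}$, so Lemma~\ref{lm:5.1} applies. I will also use repeatedly that for $f\in A\setminus\ker\partial$ the top operator $\partial^{(d)}(f)$ (with $d$ maximal such that $\partial^{(d)}(f)\neq0$) lies in $\ker\partial$, since $\partial^{(j)}\!\circ\partial^{(d)}(f)=\binom{j+d}{j}\partial^{(j+d)}(f)=0$ for $j>0$ by Definition~\ref{sec:sit2.1}~$(iv)$. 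I would dispatch $(iv)$ first, as it is formal: by Proposition~\ref{sec:LFIHD}~$(c)$ the ring $\ker\partial$ has Krull dimension $\dim X-1=n$, while as a normal affine semigroup algebra $\KK[\omega_{L}]$ its dimension equals $\rank L$; hence $\rank L=n$. For the lattice statement, given a weight $m$ with $A_{m}\neq0$ and a homogeneous $f\chi^{m}$, the top derivative gives $0\neq\partial^{(d)}(f\chi^{m})\in\ker\partial$ of degree $m+de$, so $m+de\in\omega_{L}\subseteq L$ and $m\in L+\ZZ e$; as these weights generate $M$ we obtain $M=L+\ZZ e$, and writing $d$ for the order of the cyclic group $M/L$ generated by $e+L$ yields the asserted unique decomposition.

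For $(i)$, factorial closedness pins down $\varphi_{m}$. Since $A_{m}$ is a rank-one free $A_{0}=\KK[t]$-module, write $\varphi_{m}=p(t)\psi_{m}$ with $\psi_{m}\chi^{m}$ a generator and $p\in\KK[t]$; any irreducible factor of a non-constant $p$ would, by factorial closedness, lie in $\ker\partial\cap A_{0}=\KK$, a contradiction, so $p\in\KK^{*}$ and $\divi\varphi_{m}=-\lfloor\DD(m)\rfloor$ on $\AF^{1}$. Thus $D_{m}:=\divi\varphi_{m}+\DD(m)=\{\DD(m)\}\ge0$. Because the graded pieces of $\ker\partial$ are one-dimensional, $\varphi_{m}\varphi_{m'}$ is proportional to $\varphi_{m+m'}$, so $\lfloor\DD\rfloor$ is additive on $\omega_{L}$; together with the superadditivity $\DD(m)+\DD(m')\le\DD(m+m')$ this forces $\{\DD\}$ to be superadditive, whence $\{\DD(km)\}\ge k\{\DD(m)\}$ coefficientwise for all $k$. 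As fractional parts lie in $[0,1)$, letting $k\to\infty$ gives $\{\DD(m)\}=0$, i.e. $D_{m}=0$, which is $(i)$.

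The heart of the matter is $(ii)$, and this is where I expect the real work. Here $C=\PP^{1}_{\KK}$ forces the elliptic case $A_{0}=\KK$ (Lemma~\ref{lem:5.2}), so $\divi\varphi_{m}+\DD(m)\ge0$ is now a \emph{global} effective divisor of degree $\deg\DD(m)$ and the clean affine argument of $(i)$ no longer applies directly. To isolate the distinguished point I would use the rational parametrization $\KK(X)=E(y)$ with $E=\KK(X)^{\GA}=\fract(\ker\partial)$ supplied by \cite[Lemma~2.2]{CM} (already invoked for Lemma~\ref{lem:5.2}~$(ii)$): under it the coordinate $t\in K_{0}\subseteq\KK(X)$ becomes a non-constant rational function $t=t(y)\in E(y)$, non-constant precisely because $\partial$ is horizontal, and I would take $z_{\infty}$ to be the value reached as $y\to\infty$. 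The point to verify is that every $D_{m}$ is supported at this single $z_{\infty}$, which should follow by running the factorial-closedness argument of $(i)$ on the restricted data over $C\setminus\{z_{\infty}\}\cong\AF^{1}$. The main obstacle is carrying this out intrinsically over a non-closed $\KK$; the safe route is to base change to $\bar\KK$ via Lemma~\ref{lem1.8}, extend $\partial$ to $A\otimes_{\KK}\bar\KK$ as in Lemma~\ref{lem:5.2}~$(iii)$, invoke the algebraically closed analysis of \cite[Lemma~3.16]{Li} and \cite[3.5]{FZ2} to produce a \emph{unique} distinguished point $\bar z_{\infty}$, and then descend.

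Granting $(i)$ and $(ii)$, assertion $(iii)$ is immediate: on $\AF^{1}$ (resp.\ on $\PP^{1}_{\KK}\setminus\{z_{\infty}\}\cong\AF^{1}$) one has $\divi\varphi_{m}=-\DD(m)$, and since $m\mapsto\divi\varphi_{m}$ is additive on $\omega_{L}$, the evaluation $\DD$ (resp.\ $\DD|_{\PP^{1}_{\KK}\setminus\{z_{\infty}\}}$) is linear on $\omega_{L}$ and hence on $\omega$; thus $\omega$ sits inside a cone of the relevant quasifan, and being full-dimensional ($\dim\omega=\rank L=n$) it is a maximal one. Finally, for $(v)$ I would exploit that the distinguished point is canonical: the uniqueness of $\bar z_{\infty}$ over $\bar\KK$ makes it invariant under $\mathfrak{G}_{\bar\KK/\KK}$, so by Lemma~\ref{lem1.8} its Galois orbit $S^{-1}(z_{\infty})$ is a single point; since $\KK$ is perfect, $\bar\KK/\KK$ is separable and $\deg z_{\infty}=|S^{-1}(z_{\infty})|=1$, so $z_{\infty}$ is rational, as claimed.
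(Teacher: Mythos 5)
Your treatment of $(i)$, $(iv)$ and $(v)$ is essentially the paper's, but there are two genuine gaps, both in the parts you correctly identify as carrying the weight.

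The first is in $(ii)$. Taking $z_{\infty}$ to be ``the value reached as $y\to\infty$'' under the parametrization $\KK(X)=E(y)$ does not define a point of $C$ (that limit is an element of $E\cup\{\infty\}$, not a closed point of $C$), and your fallback --- base change to $\bar\KK$ and invoke \cite[Lemma~3.16]{Li} and \cite[3.5]{FZ2} --- imports results proved only in characteristic zero (resp.\ over $\CC$); their positive-characteristic analogue is precisely what this section is establishing, so the appeal is circular. The paper's argument is direct and characteristic-free: if $\divi\varphi_{m}+\DD(m)\geq [z_{0}]+[z_{\infty}]$ for two distinct closed points, let $p_{0},p_{\infty}$ be the corresponding monic irreducible polynomials in $\KK[t]$ (or $1/t$ for the point at infinity) and set $f=p_{0}(t)/p_{\infty}(t)$; then $f\varphi_{m}$ and $f^{-1}\varphi_{m}$ both lie in $A_{m}$, the product $f\varphi_{m}\chi^{m}\cdot f^{-1}\varphi_{m}\chi^{m}=\varphi_{2m}\chi^{2m}$ lies in $\ker\partial$, so by factorial closedness both factors lie in $\ker\partial$, contradicting the one-dimensionality of the graded pieces of $\ker\partial$ from Lemma~\ref{lem:5.2}~$(i)$. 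This is the very mechanism you use in $(i)$; it just has to be run on $\PP^{1}_{\KK}$ directly rather than by reduction to $\AF^{1}_{\KK}$ (which in the elliptic case would anyway require Lemma~\ref{lm:5.5}, itself downstream of the present lemma).

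The second gap is in $(iii)$. You show that $\omega$ is full-dimensional and contained in some maximal cone $\omega_{0}$ of the quasifan and conclude it ``is a maximal one'', but a full-dimensional subcone of $\omega_{0}$ need not equal $\omega_{0}$; the reverse inclusion is the actual content. The paper proves it as follows: for $m\in\omega_{0}$, replace $m$ by a multiple so that $m\in L$ and $\DD(m)$ is integral, pick $f_{m}\in A_{m}$ with $\divi f_{m}+\DD(m)=0$ (restricted to the affine part in the elliptic case) and $m'\in\omega_{L}$ with $m+m'\in\omega_{L}$; then $f_{m}\chi^{m}\cdot\varphi_{m'}\chi^{m'}=\lambda\varphi_{m+m'}\chi^{m+m'}\in\ker\partial$, so $f_{m}\chi^{m}\in\ker\partial$ by factorial closedness and hence $m\in\omega$. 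Without this step $(iii)$ is not proved as stated.
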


\begin{proof}
  $(i)$ Given a lattice vector $m\in\sigma^{\vee}_{M}$ we let
  \[
  A_{m} = f_{m}\cdot \KK[t]\,,
  \]
  where $f_{m}\in\KK(t)$. Assume that $m\in\omega_{L}$. Then we have
  $\varphi_{m} = Ff_{m}$, for some nonzero $F\in\KK[t]$.  By
  proposition~\ref{sec:LFIHD}$(a)$ the polynomial $F$ is constant. Hence,
  \[
  \divi\varphi_{m} + \lfloor \DD(m)\rfloor = 0.
  \]
  Consequently, for any $r\in\ZZ_{\geq 0}$ we obtain
  \[
  r\lfloor \DD(m)\rfloor = -r\divi \varphi_{m} = - \divi\varphi_{rm} =
  \lfloor \DD(rm)\rfloor.
  \]
  This shows that $\DD(m)$ is integral when $m\in\omega_{L}$.

  $(ii)$ Assume that there exists $m\in\omega_{L}$ such that
  \[
  \divi \varphi_{m} + \DD(m) \geq [z_{\infty}] + [z_{\rm 0}]\,,
  \]
  where $z_{0}$, $z_{\infty}$ are distinct points of $C$. Denote by
  $\infty$ the point at the infinity in $C = \PP^{1}_{\KK}$ for the
  local parameter $t$. Let $p_{0}(t),p_{\infty}(t)\in\KK(t)$ be two
  rational functions verifying the following: if the point $z_{0}$
  (resp. $z_{\infty}$) belongs to $\AF^{1}_{\KK} = \spec \KK[t]$, then
  $p_{0}(t)$ (resp. $p_{\infty}(t)$) is the monic polynomial generator
  of the ideal of $z_{0}$ (resp. $z_{\infty}$) in $\KK[t]$. Otherwise,
  $z_{0} = \infty$ (resp. $z_{\infty} = \infty$) and we let $p_{0}(t)
  = 1/t$ (resp. $p_{\infty}(t) = 1/t$).

  Let $f := p_{0}(t)/p_{\infty}(t)$. The rational functions
  $f\varphi_m$ and $f^{-1}\varphi_m$ belong to $A_{m}$. By
  Proposition~\ref{sec:LFIHD}~$(a)$ we have
  \[
  f\varphi_{m}\chi^{m}\cdot f^{-1}\varphi_{m}\chi^{m} =
  \varphi_{2m}\chi^{2m}\in\ker\partial,\quad\mbox{and so}\quad 
  f\varphi_{m}\chi^{m},f^{-1}\varphi_{m}\chi^{m}\in\ker\partial\,,
  \]
  yielding a contradiction with Lemma~\ref{lem:5.2}~$(i)$. Hence,
  $\divi\varphi_{m} + \DD(m)$ is supported in at most one point.

  $(iii)$ By $(i)$ and $(ii)$, the map $m\mapsto\DD(m)$ in the
  non-elliptic case, and the map
  $m\mapsto\DD|_{\PP^{1}_{\KK}\setminus\{z_{\infty}\}}(m)$ in the
  elliptic case, are linear in the cone $\omega$. This implies that
  there exists a maximal cone $\omega_{0}$ belonging to $\Lambda(\DD)$
  in the non-elliptic case, and belonging to
  $\Lambda(\DD|_{\PP^{1}_{\KK}\setminus \{z_{\infty}\}})$ in the
  elliptic case, such that $\omega\subseteq\omega_{0}$.

  Let us show the reverse inclusion $\omega_0\subseteq \omega$. Let
  $m\in\omega_{0}$.  Changing $m$ by an integral multiple, we may
  assume $m\in L$ and $\DD(m)$ integral.  By Lemma~\ref{lem:5.2}~$(i)$
  and Proposition~\ref{sec:LFIHD}~$(c)$, the cone $\omega$ is full
  dimensional in $M_{\RR}$. Hence, there exists $m'\in\omega_L$ such
  that $m+m'\in\omega_{L}$. Consider a nonzero section $f_{m}\in
  A_{m}$ such that
  \[
  \divi f_{m} + \DD(m) = 0
  \]
  in the non-elliptic case, and such that
  \[
  \left(\divi f_{m} +
    \DD(m)\right)|_{\PP^{1}_{\KK}\setminus\{z_{\infty}\}} = 0
  \]
  in the elliptic case. It follows that
  \[
  f_{m}\chi^{m}\cdot \varphi_{m'}\chi^{m'} =
  \lambda\varphi_{m+m'}\chi^{m+m'}
  \]
  for some $\lambda\in\KK^{*}$. Therefore,
  $f_{m}\chi^{m}\in\ker\partial$ and again by
  Proposition~\ref{sec:LFIHD}~$(a)$ we have $m\in\omega$.

  $(iv)$ According to the fact that $\sigma^{\vee}_{M}$ spans $M$ and
  that $\partial$ is a homogeneous LFIHD on $A$, for any $m\in M$ we
  have $m + se\in L$ for some $s\in \ZZ$. Changing $r := -s$ by the
  remainder of the euclidian division of $r$ by $d$, if necessary, we
  obtain $m = l + re$, where $l\in L$ and $0\leq r < d$. The
  minimality of $d$ implies that this latter decomposition is unique.

  $(v)$ Assume that $\KK$ is perfect and fix $\bar{\KK}$ an algebraic
  closure of $\KK$.  Consider the algebra $B =
  A\otimes_{\KK}\bar{\KK}$. If we let $\DD = \sum_{z\in
    C}\Delta_{z}\cdot z$, then by Lemma~\ref{lem1.8} the polyhedral
  divisor
  \[
  \DD_{\bar{\KK}} = \sum_{z\in C}\Delta_{z}\cdot S^{\star}(z)
  \]
  over $\PP^{1}_{\bar{\KK}}$ satisfies
  \[
  B =
  \bigoplus_{m\in\sigma^{\vee}_{M}}B_{m}\chi^{m},\quad\mbox{where}\quad
  B_{m} = H^{0}(\PP^{1}_{\bar{\KK}},
  \OO_{\PP^{1}_{\bar{\KK}}}(\DD_{\bar{\KK}}(m)))\,.
  \]
  We can also extend $\partial$ to a homogeneous LFIHD
  $\partial_{\bar{\KK}}$ of horizontal type on $B$. For any
  $m\in\omega_{L}$ we have
  $\varphi_{m}\chi^{m}\in\ker \partial_{\bar{\KK}}$ and there exists a
  rational non-negative number $\lambda_{m}$ such that
  \[
  \divi \varphi_{m} + \DD(m) = \lambda_{m}\cdot z_{\infty}.
  \]
  Applying $S^{\star}$ to the previous equality we obtain
  \[
  \divi_{\bar{\KK}} \varphi_{m} + \DD_{\bar{\KK}}(m) =
  \lambda_{m}\cdot S^{\star}(z_{\infty}).
  \]
  Assume that $z_{\infty}$ is not a rational point and that
  $\lambda_{m}>0$ for some lattice vector $m\in\omega_{L}$. Changing
  $m$ by a multiple we may suppose that $\lambda_{m}$ is greater than
  $1$.  Since the field extension $\bar{\KK}/\KK$ is separable, the
  polynomial $p_{z_{\infty}}(t)$ in the proof of $(ii)$ has at least
  two distinct roots, say $z_{1},z_{2}\in\bar{\KK}$. Note that the
  points $z_{1}, z_{2}$ belong to the support of
  $S^{\star}(z_{\infty})$.  Considering the non-constant rational
  function
  \[
  f = (t-z_{1})/(t-z_{2}),
  \]
  we fall again into a contradiction with Lemma~\ref{lem:5.2}~$(i)$ since
  \[
  f\varphi_{m}\chi^{m}\cdot f^{-1}\varphi_{m}\chi^{m} =
  \varphi_{2m}\chi^{2m}\in \ker \partial_{\bar{\KK}},\quad\mbox{and
    so}\quad f\varphi_{m}\chi^{m}, f^{-1}\varphi_{m}\chi^{m}\in
  \ker \partial_{\bar{\KK}}\,.
  \]
\end{proof}

In the sequel, we let the notation be as in \ref{sit:5.3}. Without
loss of generality, whenever $\KK$ is perfect, in the elliptic case we
can assume that $z_{\infty}$ is the rational point $\infty$ for the
local parameter $t$.

\begin{lemme}\label{lm:5.5}
  Let $\KK$ be a perfect field. The following statements hold.
  \begin{enumerate}[$(i)$]

  \item If $C = \PP^{1}_{\KK}$, then the normalization of the
    subalgebra $A[t]\subseteq\KK(t)[M]$ is $A'=
    A[\AF^{1}_{\KK},\DD|_{\AF^{1}_{\KK}}]$, where $\AF^{1}_{\KK} =
    \spec\KK[t]$.

  \item If the degree of $\partial$ belongs to $\omega$ and the
    evaluation of the polyhedral divisor $\DD|_{\AF^{1}_{\KK}}$ is
    linear, then $\partial$ extends to a homogeneous LFIHD $\partial'$
    on $A'$ of horizontal type.  Furthermore, we have $\ker\partial =
    \ker \partial'$.

  \item Let $d$ be the smallest positive integer such that for any
    $m\in\omega_{M}$ the divisor $\DD(d\cdot m)$ is integral. Then we
    have $d\cdot M\subseteq L$.
  \end{enumerate}
\end{lemme}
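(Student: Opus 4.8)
The plan is to handle the three parts in order, using $(i)$ and $(ii)$ to reduce the elliptic case $C=\PP^1_\KK$ to the affine one and then doing the essential work over $\AF^1_\KK$. For \emph{Part $(i)$}, set $A'=A[\AF^1_\KK,\DD|_{\AF^1_\KK}]$. Since $\DD|_{\AF^1_\KK}$ is a proper $\sigma$-polyhedral divisor over the affine curve $\AF^1_\KK$ (properness condition $(i)$), $A'$ is the normal, finitely generated coordinate ring of an affine $\TT$-variety. I would first record $A[t]\subseteq A'$: restricting $\DD$ to $\AF^1_\KK$ only drops the condition at $\infty$, so $A\subseteq A'$, and $t\in A'_0=\KK[t]$. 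Next I observe that all three rings share the fraction field $\fract A$: indeed $K_0=\KK(t)=\KK(X)^\TT\subseteq\fract A$ and every $\chi^m$ lies in $\fract A$, so $\fract A=K_0(\chi^m\mid m\in M)$ already contains $t$, whence $\fract A[t]=\fract A=\fract A'$. It then remains to show $A'$ is integral over $A[t]$. For a lattice vector $m$ with $A_m\neq 0$ the generator of the rank-one $\KK[t]$-module $A'_m$ already lies in $A_m\chi^m\subseteq A$, so multiplying by powers of $t$ gives $A'_m\chi^m\subseteq A[t]$; for an arbitrary $m'$, properness of $\DD$ and Riemann--Roch give $A_{Nm'}\neq 0$ for $N\gg 0$, and $\lfloor N h_z(m')\rfloor\geq N\lfloor h_z(m')\rfloor$ shows $(f_{m'}\chi^{m'})^{N}\in A'_{Nm'}\chi^{Nm'}\subseteq A[t]$, so every homogeneous generator of $A'$ satisfies a monic equation over $A[t]$. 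With normality of $A'$ this identifies $A'$ with the normalization of $A[t]$.

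For \emph{Part $(ii)$}, choosing a nonzero $\varphi_{m_0}\chi^{m_0}\in\ker\partial$ I would use Lemma~\ref{sec:mult-syst} to extend $\partial$ to an LFIHD on the localization $A[(\varphi_{m_0}\chi^{m_0})^{-1}]$, inside which $t$ lies; this is what makes the a priori only iterative prolongation of $\partial$ to $\fract A$ locally finite near $t$. By Part~$(i)$ the ring $A'$ is the normalization of $A[t]$, so it suffices to show $A[t]$ is $\bar\partial$-invariant, after which $A'$ is invariant automatically, as in the proof of Corollary~\ref{cor:2.6}. The forced value $\bar\partial^{(i)}(t)$ is homogeneous of degree $ie$, hence of the form $g_i\chi^{ie}$ with $g_i\in K_0$, and the two hypotheses enter here: $e\in\omega$ guarantees $ie\in\omega$, placing the relevant graded pieces in the linear range, and linearity of the evaluation of $\DD|_{\AF^1_\KK}$ forces $g_i\chi^{ie}\in A'_{ie}\chi^{ie}$. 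This produces a homogeneous LFIHD $\partial'$ on $A'$, horizontal because it already acts nontrivially on $K_0$. Finally $\ker\partial\subseteq\ker\partial'$ and $\ker\partial'\cap A=\ker\partial$; using maximality of $\omega$ (Lemma~\ref{lm:5.4}$(iii)$) and the relation $\divi\varphi_m+\DD(m)=0$ for $m\in\omega_L$ (Lemma~\ref{lm:5.4}$(i)$), the homogeneous pieces of the two kernels coincide, giving $\ker\partial=\ker\partial'$.

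For \emph{Part $(iii)$}, after replacing $A$ by $A'$ via $(i)$ and $(ii)$ I may assume $C=\AF^1_\KK$. By Lemma~\ref{lm:5.4}$(iii)$ the evaluation $m\mapsto\DD(m)$ is linear on $\omega$, so $L':=\{m\in M\cap\langle\omega\rangle\mid \DD(m)\text{ is integral}\}$ is a sublattice containing $L$ (by Lemma~\ref{lm:5.4}$(i)$), and since $\omega_M$ generates $M$ the integer $d$ is exactly the exponent of $M/L'$. The heart of the matter is the equality $L'=L$. Let $m\in\omega_{L'}$ and let $f_m\chi^m$ generate the free rank-one $\KK[t]$-module $A_m$, so $\divi f_m+\DD(m)=0$. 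Writing $d'$ for the exponent of $M/L$ from Lemma~\ref{lm:5.4}$(iv)$, we have $d'm\in\omega_L$, and $(f_m\chi^m)^{d'}=f_m^{d'}\chi^{d'm}$ satisfies $\divi f_m^{d'}+\DD(d'm)=0$, hence generates $A_{d'm}$; comparing with the kernel generator $\varphi_{d'm}\chi^{d'm}$ shows the two differ by a unit of $\KK[t]$, i.e.\ by a scalar, so $(f_m\chi^m)^{d'}\in\ker\partial$. Factorial closedness of $\ker\partial$ (Proposition~\ref{sec:LFIHD}$(a)$), applied to $f_m\chi^m\cdot(f_m\chi^m)^{d'-1}$, then forces $f_m\chi^m\in\ker\partial$, so $m$ is a weight of $\ker\partial$ and $m\in L$. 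Hence $L'=L$, $d=d'$, and $dM\subseteq L$.

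The step I expect to be the genuine obstacle is Part~$(ii)$: the prolongation of $\partial$ across $t$ is rigidly determined by the unique extension to $\fract A$, and one must check simultaneously that it stays locally finite and that it preserves $A'$, verifying $\bar\partial^{(i)}(t)\in A'$. This is precisely where the hypotheses $e\in\omega$ and the linearity of $\DD|_{\AF^1_\KK}$ are indispensable. By contrast, Parts $(i)$ and $(iii)$ come down to Riemann--Roch together with the properness of $\DD$, and to the factorial closedness of the kernel, respectively, which are comparatively routine once the reduction to the affine line is in place.
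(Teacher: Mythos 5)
Your overall strategy matches the paper's for parts $(ii)$ and $(iii)$ --- extend $\partial$ through a localization at a homogeneous kernel element, verify invariance of $A'$ by multiplying with elements of $\ker\partial$, and reduce $(iii)$ to the affine line --- but you diverge in two places worth recording. For $(i)$ the paper simply cites \cite[Theorem~2.5]{La2}, whereas you give a self-contained integrality argument; it is correct (the point that the $\KK[t]$-generator of $A'_m$ lies in $A_m$ whenever $A_m\neq 0$ holds because $\deg\lfloor\DD(m)\rfloor\geq 0$ in that case, and the passage to multiples $Nm'$ handles the remaining degrees). For $(iii)$ the paper shows that the principal ideal $(\varphi_{dm}\chi^{dm})$ of $A_\omega$ is $\partial$-invariant and invokes Proposition~\ref{sec:LFIHD}~$(f)$, while you compare $(f_m\chi^m)^{d'}$ with the kernel generator $\varphi_{d'm}\chi^{d'm}$ and conclude by factorial closedness; both are valid and closely related.

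The one place where your write-up does not close is the middle of $(ii)$. You reduce to showing that $A[t]$ is $\bar\partial$-invariant, but what you then verify is only that $\bar\partial^{(i)}(t)=g_i\chi^{ie}$ lies in $A'_{ie}\chi^{ie}$, which is membership in $A'$, not in $A[t]$ (and $\varphi_{ie}\chi^{ie}$ need not lie in $A$, since $\deg\lfloor\DD(ie)\rfloor$ can be negative). This is repairable --- either argue as the paper does and prove directly that $\partial'^{(i)}(f\chi^m)\in A'$ for every homogeneous $f\chi^m\in A'$, or observe that $A\bigl[\bar\partial^{(i)}(t):i\geq 0\bigr]$ is a finitely generated $\bar\partial$-invariant subring sitting between $A[t]$ and $A'$ whose normalization is again $A'$ --- but as stated the reduction and the verification do not match. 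Moreover, the assertion that linearity ``forces'' $g_i\chi^{ie}\in A'_{ie}\chi^{ie}$ is exactly the computation you flag as the obstacle, and it is left undone: the paper's proof chooses $m'\in\omega_{L}$ with $\divi\varphi_{m'}+\DD(m')$ integral and sufficiently positive at $\infty$, uses $\varphi_{m'}\chi^{m'}\partial'^{(i)}(f\chi^{m})=\partial^{(i)}(\varphi_{m'}f\chi^{m+m'})\in A_{m'+m+ie}\chi^{m'+m+ie}$, and then cancels $\varphi_{m'}$ via the factorization $\varphi_{m'+m+ie}=\lambda\,\varphi_{m'}\varphi_{m+ie}$, which is precisely where linearity of $\DD|_{\AF^{1}_{\KK}}$ and integrality of $\DD(m')$ enter. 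Your identification of where the hypotheses $e\in\omega$ and linearity are used is correct, but that central argument still has to be supplied.
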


\begin{proof}
  $(i)$ This follows from \cite[Theorem~2.5]{La2}.

  $(ii)$ Letting
  \[
  A' =
  \bigoplus_{m\in\sigma^{\vee}_{M}}A'_{m}\chi^{m},\quad\mbox{where}\quad
  A'_{m} = H^{0}(\AF^{1}_{\KK}, \OO_{\AF^{1}_{\KK}}
  (\DD|_{\AF^{1}_{\KK}}(m)))\,,
  \]
  for any $m\in\sigma^{\vee}_{M}$ we can write $A'_{m} =
  \varphi_{m}\cdot\KK[t]$ with $\varphi_{m}$ is a nonzero rational
  function satisfying
  \[
  \divi\varphi_{m} +\lfloor\DD|_{\AF^{1}_{\KK}}(m)\rfloor =0\,.
  \]
  If $m\in\omega_{L}$, we can assume that $\varphi_{m}$ is as in
  Lemma~\ref{lm:5.4}~$(ii)$.

  By Lemma~\ref{sec:mult-syst}, we may extend $\partial$ to a
  homogeneous iterative higher derivation $\partial'$ on the semigroup
  algebra $\KK(t)[M]$.  Denote by $\partial'^{(i)}$ the $i$-th term of
  $\partial'$.  Consider $f\in A'_{m}$ for a lattice vector
  $m\in\sigma^{\vee}_{M}$ and fix an integer $i\in\ZZ_{>0}$.  We will
  show that $\partial'^{(i)}(f\chi^{m})\in A'$.

  By the properness of $\DD$ and Lemma~\ref{lm:5.4}~$(ii)$ with
  $z_\infty=\infty$, we can find a lattice vector $m'\in\omega_{L}$
  verifying the following.  The vectors $m,m'$ belong to a same
  maximal cone of $\Lambda(\DD)$ and the coefficient in $\infty$ of
  the divisor $\divi\varphi_{m'} + \DD(m')$ is integral, positive, and
  greater than that of $-\divi f - \lfloor\DD(m)\rfloor$.  Therefore
  \[
  \divi f\varphi_{m'} + \lfloor\DD(m'+m)\rfloor = \divi f +
  \lfloor\DD(m)\rfloor + \divi\varphi_{m'} + \DD(m') \geq 0.
  \]
  In particular, $\varphi_{m'}f$ belongs to $A_{m + m'}$. Hence it
  follows that
  \[
  \varphi_{m'}\chi^{m'}\partial'^{(i)}(f\chi^{m})
  = \partial^{(i)}(\varphi_{m'}f\chi^{m'+m}) \in A_{m' + m +
    ie}\chi^{m' + m + ie}.
  \]
  By our assumption we have $e\in\omega = \sigma^{\vee}$ so that $m +
  ie\in\sigma^{\vee}_{M}$.  Since $\DD|_{\AF^{1}_{\KK}}$ is linear and
  $\DD(m')$ is integral, we obtain the following identities of
  $\QQ$-divisors over $\AF^{1}_{\KK}$:
  \[
  -\divi \varphi_{m'+m+ie} = \lfloor \DD|_{\AF^{1}_{\KK}} (m' + m +
  ie)\rfloor = \lfloor\DD|_{\AF^{1}_{\KK}}(m')\rfloor + \lfloor
  \DD|_{\AF^{1}_{\KK}}(m+ie)\rfloor\,.
  \]
  Hence,
  \[
  \varphi_{m'+m+ie} = \lambda
  \varphi_{m'}\cdot\varphi_{m+ie}\quad\mbox{for some
  }\lambda\in\KK^{*}.
  \]
  Consequently, this implies
  \[
  \varphi_{m'}\chi^{m'}\partial'^{(i)}(f\chi^{m})\in
  A_{m'+m+ie}\chi^{m'+m+ie} \subseteq
  \varphi_{m'}\cdot\varphi_{m+ie}\cdot\KK[t]\,\chi^{m'+m+ie}\,.
  \]
  This yields
  \[
  \partial'^{(i)}(f\chi^{m})\in\varphi_{m+ie}\cdot\KK[t]\,\chi^{m+ie}
  = A'_{m+ie}\chi^{m+ie}\subseteq A',
  \]
  as required. We conclude that the subalgebra $A'$ is
  $\partial'$-invariant.

  Next, we show that $\partial'$ is a homogeneous LFIHD on $A'$.  Let
  $m'$ be as above.  We have $t\varphi_{m'}\chi^{m'}\in A$.  Thus,
  there exists $s\in\ZZ_{>0}$ such that
  \[
  \varphi_{m'}\chi^{m'}\partial'^{(i)}(t)
  = \partial^{(i)}(t\varphi_{m'}\chi^{m'}) = 0\quad\mbox{for any}\quad
  i\geq s.
  \]
  Hence $\partial'$ acts locally finitely on $t$ and so the same holds
  for $A[t]$. Let $f\in A'_{m}$ and choose $s'\in\ZZ_{>0}$ such that
  the sheaf $\OO_{\PP^{1}_{\KK}}(\lfloor \DD(m+s'm')\rfloor)$ is
  globally generated. Thus,
  \[
  \varphi_{s'm'}f\chi^{m+s'm'}\in A'_{m+s'm'} =
  \KK[t]\otimes_{\KK}A_{m+s'm'}\subseteq A[t]\,.
  \]
  Since $\varphi_{s'm'}\chi^{s'm'}$ is in the kernel of $\partial$ we
  conclude that $\partial'$ acts locally finitely on $f\chi^{m}$.
  This proves that $\partial'$ is an LFIHD. The fact that $\partial'$
  is of horizontal type is straightforward and the proof is left to
  the reader.

  It remains to show that $\ker \partial = \ker \partial'$.  By
  Lemma~\ref{lem:5.2}~$(i)$ the kernel $\ker\partial'$ is the
  semigroup algebra given by $\omega_{L'}$, where $L'$ is a sublattice
  of maximal rank. Since $\ker\partial\subseteq \ker \partial'$ we
  have $L\subseteq L'$ and $L'/L$ is a finite abelian group. Let
  \[
  \ker\partial
  =\bigoplus_{m\in\omega_{L}}\KK\cdot\varphi_{m}\chi^{m}\quad\mbox{and}\quad
  \ker\partial' =\bigoplus_{m\in\omega_{L'}} \KK\cdot\varphi'_{m}\chi^{m}\,.
  \]
  Letting $m\in L'$ we let $r\in\ZZ_{>0}$ be such that $rm\in
  L$. Then, by Lemma~\ref{lm:5.4}~$(i)$ and $(ii)$ we can write
  $\lambda\varphi_{rm} = \varphi'_{rm} = (\varphi'_{m})^{r}$, where
  $\lambda\in\KK^{*}$. So $\varphi'_{m}\chi^{m}$ is integral over
  $\ker\partial$. By normality of $A$ and since $\ker\partial$ is
  algebraically closed in $A$ one has $\varphi'_{m}\chi^{m}\in\rm
  ker\,\partial$. Hence $L' = L$ and so $\ker\partial =\ker\partial'$.

  $(iii)$ Up to multiplying the LFIHD $\partial$ by a homogeneous
  kernel element, we may assume that $\deg\partial=e\in \omega$.  In
  particular, the algebra
  \[
  A_{\omega} = \bigoplus_{m\in\omega_{M}}A_{m}\chi^{m}\quad\mbox{is }
  \partial\mbox{-invariant.} \]

  By virtue of assertions $(i)$ and $(ii)$ in the lemma, we may
  suppose that $C=\AF_\KK^1$.  Let $m\in\omega_{M}$. We have $
  A_{dm+m'} = A_{dm}\cdot A_{m'} = \varphi_{dm} A_{m'}$ for all
  $m'\in\omega_{M}$. Hence, the principal ideal
  $(\varphi_{dm}\chi^{dm})$ in the ring $A_{\omega}$ is
  $\partial|_{A_{\omega}}$-invariant.  By
  Proposition~\ref{sec:LFIHD}~$(f)$, we have
  $\varphi_{dm}\chi^{dm}\in\ker\partial$ and so
  $dm\in\omega_{L}$. This yields $d\cdot\omega_{M}\subseteq\omega_{L}$
  and $(iii)$ follows.
\end{proof}

The following result provides a geometrical characterization of
horizontal non-hyperbolic affine $\GM$-surfaces. See \cite[Theorem~3.3
and 3.16]{FZ2} for the case where the base field is $\mathbb{C}$.

\begin{corollaire} \label{cor:5.6}
  Assume $\KK$ is perfect. Let $N = \ZZ$ and $\sigma = \RR_{\geq 0}$,
  so that $\DD$ is uniquely determined by the $\QQ$-divisor $\DD(1)$.
  If the graded algebra $A$ admits a homogeneous LFIHD of horizontal
  type, then the following statements hold.
  \begin{enumerate}[$(i)$]
  \item If $C = \AF^{1}_{\KK}$, then the fractional part $\{\DD(1)\}$
    has at most one point in its support.
  \item If $C = \PP^{1}_{\KK}$, then $\{\DD(1)\}$ has at most two
    points in its support.
  \end{enumerate}
  In each case, the support of $\{\DD(1)\}$ consists of rational
  points. In particular, every horizontal non-hyperbolic affine
  $\GM$-surface over $\KK$ is toric.
\end{corollaire}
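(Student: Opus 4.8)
The plan is to reduce everything to a statement about a cyclic covering of the line and then to count its branch points. First I would record the combinatorial shape of the data. Since $N=\ZZ$ and $\sigma=\RR_{\geq 0}$, each coefficient of $\DD$ is a half-line $[a_z,\infty)$ and the evaluation is linear, $\DD(m)=m\,\DD(1)$ for $m\in\sigma^{\vee}_{M}=\ZZ_{\geq 0}$; thus $X$ is non-hyperbolic and $\supp\{\DD(1)\}$ is exactly the set of $z$ with $a_z\notin\ZZ$. By Lemma~\ref{lem:5.2} we are in one of two cases, $C=\PP^1_\KK$ (elliptic) or $C=\AF^1_\KK$ (non-elliptic), and the kernel of the given horizontal LFIHD $\partial$ is a semigroup algebra on the lattice $L=d\ZZ$ with $d=[M:L]$; by the Nakai-type result quoted after Lemma~\ref{lem:5.2} it is a polynomial algebra $\ker\partial=\KK[g]$ with $g=\varphi_d\chi^d$ homogeneous of degree $d$, and Lemma~\ref{lm:5.4}~$(iv)$ gives $\gcd(e,d)=1$ for $e=\deg\partial$. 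Lemma~\ref{lm:5.4}~$(i)$--$(ii)$ identifies $\divi\varphi_d$ with $-d\,\DD(1)$ up to a multiple of the rational point $z_\infty$ of Lemma~\ref{lm:5.4}~$(v)$.

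By Lemma~\ref{lm:5.5}~$(i)$--$(ii)$ the elliptic case reduces to the non-elliptic one: restricting to the chart $\AF^1_\KK=\PP^1_\KK\setminus\{z_\infty\}$ replaces $A$ by $A'=A[\AF^1_\KK,\DD|_{\AF^1_\KK}]$ with the same kernel and an induced horizontal LFIHD (after multiplying $\partial$ by a power of $g$ so that $e\in\omega$, as in the proof of Lemma~\ref{lm:5.5}~$(iii)$), so it suffices to bound the finite support of $\{\DD(1)\}$ by one point; the extra rational point $z_\infty$ then supplies the second point in $(ii)$. So assume $C=\AF^1_\KK$. The central device is the degree-$d$ kernel generator $g$. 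Adjoining a $d$-th root $u$ with $u^d=g$, Corollary~\ref{cor:2.6} extends $\partial$ to a horizontal LFIHD $\partial'$ on the normalization $B$ of $A[u]$ with $u\in\ker\partial'$; as $u$ is homogeneous of degree $1$, the weight lattice of $\ker\partial'$ is all of $M$. By Lemma~\ref{lm1.9} (after replacing $\KK$ by its finite separable, hence still perfect, algebraic closure inside $B$) we get $B=A[C',\DD']$ for the cyclic cover $\pi\colon C'\to C$ defined by $\varphi_d^{1/d}$, with $\DD'=\sum_z\Delta_z\,\pi^*(z)$, and the integrality of the weight lattice forces $\DD'(1)$ to be integral. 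Applying Lemma~\ref{lem:5.2} to $B$ shows that $C'$ is rational of genus $0$, and integrality of $\DD'(1)$ over a point $z$ is equivalent to $\pi$ being unramified there; hence $\supp\{\DD(1)\}$ is precisely the branch locus of the genus-$0$ connected cyclic cover $\pi$.

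It then remains to bound that branch locus. The morphism $\pi\colon C'\cong\AF^1\to C=\AF^1$ extends to a degree-$d$ self-cover of $\PP^1$ that is totally ramified over the point at infinity, since the fibre over $\infty$ lies inside the single added point of $C'$; a connected cyclic cover of $\PP^1$ of genus $0$ that is totally ramified over one point has at most one further branch point. Therefore $\{\DD(1)\}$ has at most one point in its finite support, and that point is rational by the conjugation argument of Lemma~\ref{lm:5.4}~$(v)$: an irrational branch point would split over $\bar\KK$ into several branch points of the base-changed cover of Lemma~\ref{lem1.8}, contradicting the bound applied over $\bar\KK$. This proves $(i)$; adding back the rational point $z_\infty$ yields $(ii)$ with at most two rational points. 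Finally I would make the torus explicit: normalizing the fractional coefficient to sit at $\{t=0\}$ (resp.\ at $\{t=0\}$ and $z_\infty$) and absorbing the integral part by a $\TT$-equivariant change of the $\chi^m$, the graded pieces become $A_m=t^{-\lfloor mb/d\rfloor}\KK[t]\,\chi^m$, so that $A$ is the semigroup algebra of the two-dimensional cone $\{(i,m)\mid m\geq 0,\ di+bm\geq 0\}$ and $X$ is toric.

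The main obstacle is exactly this branch-count. Lemmas~\ref{lm:5.4} and \ref{lm:5.5} control the divisor only on degrees in $L=d\ZZ$, where they see $d\,\DD(1)$ rather than $\DD(1)$ itself, so the whole difficulty is to convert this into pointwise information on the denominators of the $a_z$. The cyclic cover performs this conversion, but the genus-$0$ bound must be argued uniformly in tame and wild ramification: when $p\mid d$ the cover is wildly ramified (of Artin--Schreier--Witt type) and Riemann--Hurwitz acquires conductor contributions. The delicate point to nail down is that total ramification over infinity still permits at most one additional branch point regardless of the characteristic, and that in the elliptic case $z_\infty$ is genuinely the only extra point that can occur.
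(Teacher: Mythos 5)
Your overall route coincides with the paper's up to the last step: both reduce the elliptic case to the non-elliptic one via Lemma~\ref{lm:5.5}, reduce to $\bar{\KK}$ via Lemma~\ref{lem1.8}, adjoin a $d$-th root of the degree-$d$ kernel generator, and use Corollary~\ref{cor:2.6} together with Lemma~\ref{lem:5.2}~$(iii)$ to conclude that the covering curve $C'=\spec B_{0}$ is again an affine line. Where the paper then argues algebraically inside $B_{0}$ (a UFD with units $\KK^{*}$, in which the pairwise coprime factors $(t-z_{i})^{r_{i}}$ of $f$ must each become $d$-th powers), you substitute a Riemann--Hurwitz branch count for the induced cover of $\PP^{1}$. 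In characteristic zero and in the tame case $p\nmid d$ your count is correct and is essentially the paper's computation in geometric dress.

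The case $p\mid d$ --- which you yourself flag as ``the delicate point to nail down'' --- is a genuine gap, and the repair you sketch cannot work. When $p\mid d$ the extension $\KK(t,f^{1/d})/\KK(t)$ is not an Artin--Schreier--Witt (wildly ramified Galois) extension: it is a purely inseparable extension $\KK(t^{1/p^{k}})/\KK(t)$ followed by a tame Kummer extension, so it is not Galois, Riemann--Hurwitz in branch-counting form does not apply to the inseparable part, and your dictionary ``$z\in\supp\{\DD(1)\}$ iff $\pi$ is ramified over $z$'' fails: a purely inseparable cover is ramified everywhere in the divisor-pullback sense of Lemma~\ref{lm1.9}, while contributing nothing to the genus. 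Concretely, take $d=p$ and $\DD(1)=-\tfrac{1}{p}[0]-\tfrac{p-1}{p}[1]$ on $\AF^{1}_{\KK}$, so that $f=t(t-1)^{p-1}$. Then $f^{1/p}=s(s-1)^{p-1}$ with $s=t^{1/p}$, hence $\KK(t,f^{1/p})=\KK(t^{1/p})$, the normalization $B_{0}$ of $\KK[t,f^{1/p}]$ is the polynomial ring $\KK[t^{1/p}]$, $C'\cong\AF^{1}_{\KK}$ has genus $0$ with a single place at infinity, and $\DD'(1)=-[0]-(p-1)[1]$ is integral --- yet $\{\DD(1)\}$ is supported at two points. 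So no property of the cover $\pi$ alone (genus of $C'$, one place at infinity, total ramification over $\infty$) can exclude a second fractional point whose denominator is divisible by $p$; one must feed in more of the LFIHD itself, e.g.\ the order-of-vanishing estimates of Lemma~\ref{sec:line} and the explicit shape of horizontal LFIHDs. (In fairness, the paper's own one-line UFD step is silent on exactly this configuration; but your proposal makes the branch count the load-bearing step, and the announced ``conductor contributions'' cannot save it, because the obstruction is inseparability rather than wild ramification.)
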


\begin{proof}
  $(i)$ We first prove the result in the case where $\KK$ is
  algebraically closed. Let $d$ be the smallest positive integer such
  that $\DD(d)$ is an integral divisor.  Letting $f\in \KK(t)$ a
  generator of $A_{d}$, i.e. $A_{d} = f\cdot A_{0}$, we let $B$ be the
  integral closure of $A[\sqrt[d]{f}\chi]$ in its field of fractions.
  Up to a principal divisor, we may assume $\DD(1)<0$ and so $f\in
  \KK[t]$ is a polynomial.  By Lemma~\ref{lm:5.5}~$(ii)$, we have
  $f\chi^{d}\in\ker\partial$.

  By Corollary~\ref{cor:2.6}, we obtain the existence of an LFIHD
  $\partial'$ on $B$ extending $\partial$ and satisfying
  $\sqrt[d]{f}\chi\in\ker\partial'$.  Write $B = A[C',\DD']$ for some
  polyhedral divisor $\DD'$ on a regular affine curve $C' = \spec
  B_{0}$.  Actually, $B_{0}$ is the normalization of
  $\KK[t,\sqrt[d]{f}]$ and also a polynomial algebra of one variable
  over $\KK$ (see Lemma~\ref{lem:5.2}~$(iii)$). The fact that
  $B_{0}^{*} = \KK^{*}$ and that $B_{0}$ is an unique factorization
  domain implies that $f = (t-z)^{r}$ for some $z\in \KK$ and some
  $r\in\ZZ_{>0}$.  Since $\divi f + d\cdot\DD(1) = 0$ one concludes
  that $\{\DD(1)\}$ is supported in at most on the point $z$.

  Assume now that $\KK$ is not algebraically closed and that
  $\{\DD(1)\}$ is supported in at least two points. Extending the
  scalar to the algebraic closure $\bar{\KK}$ gives a contradiction by
  Lemma~\ref{lem1.8}.

  \medskip
  
  $(ii)$ Multiplying $\partial$ by a homogeneous element in its
  kernel, we may assume that the degree of $\partial$ is
  non-negative. By Lemma~\ref{lm:5.5}~$(ii)$, the LFIHD $\partial$
  extends to a homogeneous LFIHD $\partial'$ of horizontal type on the
  normalization $A'$ of the algebra $A[t]$. Note that the graded
  algebra $A'$ is given by the polyhedral divisor
  $\DD|_{\AF^{1}_{\KK}}$.  Applying $(i)$ for the non-elliptic graded
  algebra $A'$, the fractional part $\{\DD_{|\AF^{1}_{\KK}}(1)\}$ has
  at most one point in its support. So $\{\DD(1)\}$ is supported in at
  most two points. This yields $(ii)$.

  Let us show the latter claim.  By a similar argument, we deduce that
  in any case the support of $\{\DD(1)\}$ consists of rational points
  (see Lemma~\ref{lem1.8}).  Assume that $A$ is non-elliptic. Since
  $\{\DD(1)\}$ is supported in at most one rational point, without
  loss of generality, we can let
  \[
  \DD(1) = -\frac{e}{d}\cdot 0,\quad\mbox{where}\quad 0\leq e < d,
  \quad\mbox{and}\quad \gcd(e,d) = 1\,.
  \]
  A straightforward computation shows that
  \[
  A = \bigoplus_{b\geq 0,\,ad-be\geq 0}\KK\,t^{a}\chi^{b},
  \]
  see e.g. \cite[Lemma~3.8]{FZ2} and \cite[Example~3.20]{Li}. The
  algebra $A$ admits an effective $\ZZ^{2}$-grading endowing $X =
  \spec A$ with a structure of a toric surface. Assume that $A$ is
  elliptic.  Using the fact that every integral divisor over $\PP^{1}$
  of degree $0$ is principal, we can reduce to the case where $\DD$ is
  supported in the points $0$ and $\infty$.  We conclude by a similar
  argument as in \cite[Example~3.21]{Li}.
\end{proof}

As a consequence of Corollary~\ref{cor:5.6}, we obtain the following
result.

\begin{corollaire}
  With the notation in \ref{sit:5.3}, we let $A_{\omega} =
  \bigoplus_{m\in\omega_{M}}A_{m}\chi^{m}$ and let $\tau =
  \omega^{\vee}\subseteq N_\RR$.  Then $A_{\omega}\simeq
  A[C,\DD_{\omega}]$ as $M$-graded algebras, where $\DD_{\omega}$ is
  $\tau$-proper polyhedral divisor over the curve $C$ satisfying the
  following conditions.
  \begin{enumerate}[$(i)$]
  \item If $A$ is non-elliptic, then $\DD_{\omega} =
    (v+\tau)\cdot 0$ for some $v\in N_{\QQ}$.
  \item If $A$ is elliptic, then $\DD_{\omega} = (v+\tau)\cdot 0 +
    \Delta'_{\infty}\cdot\infty$ for some $v\in N_{\QQ}$ and some
    $\Delta'_{\infty}\in \pol_{\tau}(N_{\RR})$ satisfying $v +
    \Delta'_{\infty}\subsetneq \tau$.
  \end{enumerate}
\end{corollaire}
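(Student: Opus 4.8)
The plan is to restrict attention to the subalgebra $A_\omega=\bigoplus_{m\in\omega_M}A_m\chi^m$, which is the coordinate ring of the affine $\TT$-variety whose combinatorial data we want to pin down, and to determine its polyhedral divisor directly from the structure results already established for $\ker\partial$. The point is that $A_\omega$ carries the restriction of the horizontal LFIHD $\partial$ (after scaling so that $\deg\partial=e\in\omega$, using Lemma~\ref{lm:5.5}~$(iii)$ to arrange that $A_\omega$ is $\partial$-invariant), so $A_\omega$ is an affine $\TT$-variety of complexity one with a horizontal $\GA$-action whose kernel is the full-dimensional semigroup algebra associated with $\omega_L$. By the functorial uniqueness in the Altmann--Hausen--type presentation (Theorem following Definition~\ref{sec:def-convex}), writing $A_\omega=A[C,\DD_\omega]$ for a suitable $\tau$-proper polyhedral divisor $\DD_\omega$ with $\tau=\omega^\vee$ is automatic; what must be determined is the shape of $\DD_\omega$, and this is exactly what Lemma~\ref{lm:5.4} and Corollary~\ref{cor:5.6} control.

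First I would reduce to a rank-one picture. The evaluation $m\mapsto\DD_\omega(m)$ is linear on $\omega$ by Lemma~\ref{lm:5.4}~$(iii)$, since $\omega$ is a maximal cone of the relevant quasifan; hence $\DD_\omega$ is genuinely a polyhedral divisor over $C$ whose coefficients are translated copies of $\tau$, and the only freedom left is the location of the vertices and, in the elliptic case, the size of the compact part at $\infty$. To read off these vertices I would test against a one-parameter subfamily: for a primitive $v^\vee$ generating a ray of $\tau=\omega^\vee$, the pairing $\langle\,\cdot\,,v\rangle$ for each vertex $v$ of a coefficient $\Delta_{z}$ of $\DD_\omega$ records the coefficient of $z$ in $\DD_\omega(m)$ as $m$ varies in $\omega$, and Lemma~\ref{lm:5.4}~$(i)$ and $(ii)$ tell me precisely where $\divi\varphi_m+\DD_\omega(m)$ is supported: nowhere when $C=\AF^1_\KK$, and only at the rational point $z_\infty=\infty$ when $C=\PP^1_\KK$ (rationality from Lemma~\ref{lm:5.4}~$(v)$). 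This forces $\DD_\omega$ to be supported at most at $0$ (the origin of $\AF^1_\KK$) in the non-elliptic case, and at most at $0$ and $\infty$ in the elliptic case, giving the announced forms $\DD_\omega=(v+\tau)\cdot 0$ and $\DD_\omega=(v+\tau)\cdot 0+\Delta'_\infty\cdot\infty$.

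Next I would identify the coefficients themselves. The single coefficient $v+\tau$ at $0$ comes from slicing $\DD_\omega$ by the surfaces obtained from restricting $\partial$ to the $\GM$-subactions: each ray of $\tau$ together with the grading cuts out a non-hyperbolic affine $\GM$-surface, and Corollary~\ref{cor:5.6}, applied to these slices, says exactly that the fractional part of the divisor is supported in at most one rational point in the affine case (whence $v\in N_\QQ$ is a single vertex translate of $\tau$) and in at most two rational points in the projective case. For the elliptic subcase the properness condition in Definition~\ref{sec:def-convex}~$(ii)$ must be invoked: since $C=\PP^1_\KK$ is projective, $\deg\DD_\omega$ must be a proper subset of $\tau$, and $\deg\DD_\omega=v+\Delta'_\infty$ (as $[\kappa_0:\KK]=[\kappa_\infty:\KK]=1$ by rationality), which yields precisely the stated strict inclusion $v+\Delta'_\infty\subsetneq\tau$ and the requirement $\Delta'_\infty\in\pol_\tau(N_\RR)$.

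\textbf{The main obstacle} I expect is verifying that $\DD_\omega$ is genuinely $\tau$-proper and that the isomorphism $A_\omega\simeq A[C,\DD_\omega]$ is an isomorphism of $M$-graded algebras rather than merely an abstract one; concretely, I must check that the compact part $\Delta'_\infty$ in the elliptic case lands in $\pol_\tau(N_\RR)$ and satisfies the properness criterion, which is where the assumption $\KK$ perfect (for rationality of $z_\infty$) and the properness of the original $\DD$ both get used. The surface-level statements of Corollary~\ref{cor:5.6} do most of the work, so the remaining effort is bookkeeping: translating ``support in rational points'' into ``single vertex $v\in N_\QQ$'', and confirming that degree considerations on $\PP^1_\KK$ produce the strict containment. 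These are routine once the slicing reduction is in place, and I would leave the detailed divisor computations to the reader as in the surrounding lemmas.
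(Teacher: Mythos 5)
Your overall strategy is the paper's: establish linearity of $m\mapsto\DD(m)$ on $\omega$ (Lemma~\ref{lm:5.4}~$(iii)$), slice $A_\omega$ into rank-one graded subalgebras carrying horizontal $\GA$-actions, apply Corollary~\ref{cor:5.6} to confine the fractional parts to one (resp.\ two) rational points, and use properness of $\DD$ for the strict inclusion $v+\Delta'_\infty\subsetneq\tau$ in the elliptic case. However, two steps are too loose to go through as written. First, you cannot ``restrict $\partial$ to the $\GM$-subactions'': a ray subalgebra $\bigoplus_{r\geq 0}A_{rm}\chi^{rm}$ is not $\partial$-invariant unless $\deg\partial$ lies on that ray. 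The paper's device is to twist by kernel elements: for $\ell\in\omega_L$ one sets $\partial_\ell=\varphi_\ell\cdot\partial$, of degree $\ell+e$, so that $B_{(\ell+e)}=\bigoplus_{r\geq 0}A_{r(\ell+e)}\chi^{r(\ell+e)}$ is $\partial_\ell$-invariant and the restriction is still horizontal; only then does Corollary~\ref{cor:5.6} apply. (Also, the slices come from rays of $\omega\subseteq M_\RR$ through the points $\ell+e$, not from rays of $\tau\subseteq N_\RR$.)

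Second, the assertion that Lemma~\ref{lm:5.4}~$(i)$--$(ii)$ already ``forces $\DD_\omega$ to be supported at most at $0$'' (resp.\ at $0$ and $\infty$) is premature: those items only concern $m\in\omega_L$, where they say $\DD(m)$ is (anti-)principal, and a principal divisor can be supported anywhere; the content of the corollary lies in the fractional parts $\{\DD(m)\}$ for $m\in\omega_M\setminus\omega_L$. Moreover, after the slicing one still has to check that the single support point produced by Corollary~\ref{cor:5.6} is the \emph{same} for all slices; the paper gets this from the identity $\DD(\ell+\ell'+e)=\DD(\ell+e)-\divi\varphi_{\ell'}$, whence $\{\DD(\ell+e)\}=\{\DD(\ell'+e)\}$ for all $\ell,\ell'\in\omega_L$, and then concludes via linearity and Lemma~\ref{lm:5.4}~$(iv)$ that, up to a principal polyhedral divisor, $\DD_\omega$ is supported at one rational point. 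Your treatment of the elliptic case (arguing directly on $\PP^1_\KK$ with Corollary~\ref{cor:5.6}~$(ii)$ and properness) differs mildly from the paper, which instead reduces to the non-elliptic case by passing to the normalization of $A_\omega[t]$ via Lemma~\ref{lm:5.5}; either route is fine once the two gaps above are filled.
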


\begin{proof}
  $(i)$ We will follow the argument in \cite[Lemma~3.23]{Li}. Note
  that the degree $e$ of $\partial$ belongs to $\omega$. For
  $\ell\in\omega_{L}$ denote by $\partial_{\ell}$ the homogeneous
  LFIHD $\varphi_\ell\cdot\partial$.  The subalgebra
  \[
  B_{(\ell+e)} = \bigoplus_{r\geq 0}A_{r(\ell+e)}\chi^{r(\ell+e)}
  \]
  is $\partial_{\ell}$-invariant. Since the homogeneous LFIHD
  $\partial_{\ell}|_{B_{(\ell+e)}}$ is of horizontal type, we can
  apply Corollary~\ref{cor:5.6} to conclude that $\{\DD(\ell+e)\}$ is
  supported in at most one point. By Lemma~\ref{lm:5.4}~$(i)$, for all
  $\ell,\ell'\in\omega_{L}$ we have
  \[
  -\divi\varphi_{\ell'} + \DD(\ell+e) = \DD(\ell+\ell'+e) =
  \DD(\ell'+e) - \divi\varphi_{\ell}\,,\quad\mbox{and so}\quad
  \{\DD(\ell+e)\} = \{\DD(\ell'+e)\}\,.
  \]

  Thus, the union of the supports of the divisors $\{\DD(\ell+e)\}$
  has at most one element, where $\ell$ runs over $\omega_{L}$. By the
  linearity of $\DD$ in $\omega$ and Lemma~\ref{lm:5.4}~$(iv)$, up to
  a principal polyhedral divisor, the polyhedral divisor $\DD_\omega$
  of $A_{\omega}$ is supported in at most one point. This point needs
  to be rational so $(i)$ follows.

  $(ii)$ By multiplying $\partial$ with a kernel element, we may
  assume $e\in\omega$. Let $A'_{\omega}$ be the normalization of
  $A_{\omega}[t]$. By Lemma~\ref{lm:5.5}, elements of degree
  $m\in\omega_{M}$ in $A'_{\omega}$ correspond to the product of a
  global section of $\DD|_{\AF^{1}_{\KK}}(m)$ and the character
  $\chi^{m}$. In addition, $\partial$ extends to a homogeneous LFIHD
  of horizontal type on $A'_{\omega}$. By $(i)$, the union of the
  supports of the divisors $\{\DD|_{\AF^{1}_{\KK}}(m)\}$, where $m$
  runs trough $\omega_{M}$, has at most one rational point.  This
  concludes the proof.
\end{proof}

For our next theorem, which is a key ingredient in our classification
result, we introduce the following notation. Let $\DD$ be a proper
$\sigma$-polyhedral divisor over $\AF^{1}_{\KK}$ or $\PP^{1}_{\KK}$
such that the coefficient $\Delta_0$ at zero is $v+\sigma$ for some
$v\in N_\QQ$. Let $\widehat{M}=M\times \ZZ$ and let
$\widehat{N}=N\times \ZZ$. We also let $\widehat{\sigma}$ be the cone
in $\widehat{N}_\RR$ generated by $(v,1)$ and $(\sigma,0)$ if
$C=\AF_\KK^1$ and by $(v,1)$,$(\sigma,0)$ and $(\Delta_\infty,-1)$ if
$C=\PP_\KK^1$.

\begin{theorem}\label{th:5.8}
  Let $\DD$ be a $\sigma$-proper polyhedral divisor over a regular
  curve $C$.  Assume that $\DD$ satisfies one of the following
  conditions.
  \begin{enumerate}[(i)]
  \item If $C$ is affine, then $C = \AF^{1}_{\KK} = \spec\KK[t]$ and
    $\DD = (v + \sigma)\cdot 0$ for some $v\in N_{\QQ}$.
  \item If $C$ is projective, then $C = \PP^{1}_{\KK}$ and $\DD = (v +
    \sigma)\cdot 0 + \Delta_{\infty}\cdot\infty$ for some $v\in
    N_{\QQ}$ and for some $\Delta_{\infty}\in
    \operatorname{Pol}_{\sigma}(N_{\RR})$.
  \end{enumerate}
  
  Let $d$ be the smallest positive integer such that $dv\in N$. For
  any $m\in M$ we let $h(m) = \langle m, v\rangle$.  Then there exists
  a homogeneous LFIHD $\partial$ of horizontal type on $A = A[C,\DD]$
  with $\deg\partial = e$ if and only if the following statements
  hold.
  \begin{enumerate}[(a)]
  \item If $\chara\KK=p>0$, then there exists a sequence of integers
    $0\leq s_{1} < s_{2} < \ldots < s_{r}$ such that for $i =
    1,\ldots, r$ we have $\big(p^{s_i}e,-1/d -
    h(p^{s_i}e)\big)\in\rt\widehat{\sigma}$.
  \item If $\chara\KK=0$, then $\big(e,-1/d -
    h(e)\big)\in\rt\widehat{\sigma}$.
  \end{enumerate}
  Under these latter conditions, the LFIHD $\partial$ is of following
  form. Let $\zeta = \sqrt[d]{t}$.  Let us consider the LFIHD
  $\partial_{\zeta}$ on the algebra $\KK[\zeta]$ with exponential map
  \begin{align} \label{eq:6} %
    e^{x\partial_{\zeta}}(\zeta) = \zeta + \sum_{i =
      1}^{r}\lambda_{i}x^{p^{s_{i}}},
  \end{align}
  where $\lambda_{1},\ldots, \lambda_{r}\in\KK^{*}$ (resp. with
  $\partial^{(1)}_{\zeta} = \lambda \frac{\rm d}{\rm d\it \zeta}$,
  where $\lambda\in\KK^{*}$) whenever $\rm char\it\,\KK\rm >0$
  (resp. $\rm char\it\, \KK\rm = 0$).  Then the $i$-th term of
  $\partial$ is given by the equality
  \begin{align} \label{eq:7}
    \partial^{(i)}(t^{l}\chi^{m}) =
    \zeta^{-dh(m+ie)}\partial^{(i)}_{\zeta}(\zeta^{dh(m)}t^{l})\chi^{m+ie}
    \quad\mbox{for all}\quad t^{l}\chi^{m}\in A\,.
  \end{align}
\end{theorem}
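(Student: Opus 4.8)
The plan is to deduce the theorem from the toric classification of Theorem~\ref{sec:th3.5} and Corollary~\ref{cor3.6} by recognizing $X=\spec A$ as an affine toric variety for the enlarged split torus $\widehat{\TT}=\TT\times\GM$, with character lattice $\widehat{M}=M\times\ZZ$ and cocharacter lattice $\widehat{N}=N\times\ZZ$. Writing a homogeneous element of $A$ of $\TT$-weight $m$ as $t^{l}\chi^{m}$, the condition $\divi(t^{l})+\lfloor\DD(m)\rfloor\geq 0$ becomes $l+h(m)\geq 0$ in the affine case, and $l+h(m)\geq 0$ together with $h_{\infty}(m)-l\geq 0$ in the projective case, where $h_{\infty}$ is the support function of $\Delta_{\infty}$. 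A direct check shows these are exactly the conditions $(m,l)\in\widehat{\sigma}^{\vee}_{\widehat{M}}$, so $t^{l}\chi^{m}\mapsto\chi^{(m,l)}$ identifies $A$ with the semigroup algebra $\KK[\widehat{\sigma}^{\vee}_{\widehat{M}}]$; strong convexity of $\widehat{\sigma}$ follows from the properness of $\DD$. Under this identification the $M$-grading is induced by the subtorus $\TT\subseteq\widehat{\TT}$, the character $t$ is $\chi^{(0,1)}$, and the minimality of $d$ shows that $\widehat{\rho}_{0}:=(dv,d)$ is a primitive extremal ray of $\widehat{\sigma}$. By Lemma~\ref{lm:5.1} a homogeneous LFIHD of degree $e$ is horizontal precisely when it moves $t=\chi^{(0,1)}$, which in the toric picture means that its action is concentrated along $\widehat{\rho}_{0}$.

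For the forward implication I would, given the roots $(p^{s_i}e,-1/d-h(p^{s_i}e))\in\rt\widehat{\sigma}$ (resp. the single root in characteristic zero) and the scalars $\lambda_{i}$, define $\partial_{\zeta}$ on $\KK[\zeta]$ with $\zeta=\sqrt[d]{t}$ by \eqref{eq:6}, and then define $\partial$ through the conjugation formula \eqref{eq:7}. The axioms of Definition~\ref{sec:sit2.1} for $\partial$ follow formally from those for $\partial_{\zeta}$, since \eqref{eq:7} transports $\partial_{\zeta}$ through the twist $t^{l}\chi^{m}\mapsto\zeta^{dh(m)}t^{l}$, and local finiteness is inherited likewise. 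The substantive point is that $\partial^{(i)}(t^{l}\chi^{m})$ lands back in $A$: computing $\partial_{\zeta}^{(i)}(\zeta^{dh(m)}t^{l})$ as a monomial in $\zeta$ via Lemma~\ref{sec:line}, the output has $\TT$-weight $m+ie$ and its required order at $0$ (and at $\infty$) is exactly the inequality $\langle(p^{s_i}e,-1/d-h(p^{s_i}e)),\widehat{\rho}'\rangle\geq 0$ over the rays $\widehat{\rho}'\neq\widehat{\rho}_{0}$ of $\widehat{\sigma}$, i.e. the Demazure root condition. Horizontality is immediate since $\partial$ moves $t$.

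For the converse, starting from a horizontal homogeneous LFIHD $\partial$ of degree $e$, I would first use Lemma~\ref{lem:5.2} and Lemma~\ref{lm:5.4} to pin down the kernel as a semigroup algebra and to produce the transverse coordinate $\zeta=\sqrt[d]{t}$ attached to a vector $\widehat{m}\in\widehat{M}$ with $\langle\widehat{m},\widehat{\rho}_{0}\rangle=1$. Exactly as in the proof of Theorem~\ref{sec:th3.5}, the action of $\partial$ along $\widehat{\rho}_{0}$ is encoded by an LFIHD on a one-variable polynomial algebra, so Proposition~\ref{sec:LFIHD}(d) forces its exponential into the shape \eqref{eq:6}, namely $e^{x\partial_{\zeta}}(\zeta)=\zeta+\sum_{i}\lambda_{i}x^{p^{s_{i}}}$ (a single linear term when $\chara\KK=0$). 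The Frobenius exponents $p^{s_{i}}$ that actually occur are precisely those for which the component $\partial^{(p^{s_{i}})}$ preserves $A$; translating this preservation back through the identification $A=\KK[\widehat{\sigma}^{\vee}_{\widehat{M}}]$ gives $\langle\,\cdot\,,\widehat{\rho}_{0}\rangle=-1$ automatically and the remaining inequalities over the other rays, i.e. $(p^{s_{i}}e,-1/d-h(p^{s_{i}}e))$ is a root with distinguished ray $\widehat{\rho}_{0}$, which is condition (a) (resp. (b)), and it reconstitutes $\partial$ in the form \eqref{eq:7}.

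The main obstacle, in both directions, is the order bookkeeping that converts the condition $\partial^{(i)}(A)\subseteq A$ into the linear inequalities cutting out a root of $\widehat{\sigma}$: one must simultaneously control the pole at $0$ governed by $h$ and, in the projective case, the pole at $\infty$ governed by the support function of $\Delta_{\infty}$, while the $\zeta$-powers are shifted by $\partial_{\zeta}^{(i)}$ according to Lemma~\ref{sec:line}. A secondary difficulty is that $\partial$ is only $M$-homogeneous and \emph{not} $\widehat{M}$-homogeneous, so unlike in Theorem~\ref{sec:th3.5} its coefficient functions are not forced into a single binomial but into the $p$-polynomial shape of Proposition~\ref{sec:LFIHD}(d); checking that the several root conditions indexed by $s_{1}<\dots<s_{r}$ are jointly compatible with one LFIHD structure, in particular with the iterativity axiom (iv) of Definition~\ref{sec:sit2.1}, is where the argument must be handled most carefully.
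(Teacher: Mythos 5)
Your overall route is the same as the paper's: identify $A$ with the semigroup algebra $\KK[\widehat{\sigma}^{\vee}_{\widehat{M}}]$ of the enlarged torus, pass to the cyclic cover $\zeta=\sqrt[d]{t}$, use Proposition~\ref{sec:LFIHD}~$(d)$ to put $\partial_{\zeta}$ into the shape \eqref{eq:6}, and read off Demazure-root conditions on $\widehat{\sigma}$. The construction direction is also essentially the paper's; the only difference is that the paper realizes \eqref{eq:7} as the composition $e^{x\partial_{e_{1},s_{1}}}\circ\cdots\circ e^{x\partial_{e_{r},s_{r}}}$ of the rationally homogeneous LFIHDs of Corollary~\ref{cor3.6}, which makes the invariance of $A$ automatic, whereas you propose a term-by-term order check; both work.

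The genuine gap is in the necessity of condition $(a)$ when $r\geq 2$. You assert that translating the preservation of $A$ by $\partial^{(p^{s_{i}})}$ back through the toric identification yields that each $\big(p^{s_{i}}e,-1/d-h(p^{s_{i}}e)\big)$ is a root, but for $i\geq 2$ the operator $\partial^{(p^{s_{i}})}$ is not governed by $\lambda_{i}$ alone: expanding $e^{x\partial_{\zeta}}(\zeta^{\ell})$ multinomially, the coefficient of $x^{p^{s_{i}}}$ mixes the term with $i_{i}=1$ with terms such as $i_{1}=p^{s_{i}-s_{1}}$, so knowing that the whole sum lies in $A$ does not by itself isolate the inequalities attached to $e_{i}$; Lemma~\ref{sec:line} only controls the lowest exponent $s_{1}$. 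The missing idea, which is the paper's key device, is an induction that peels off one Frobenius layer at a time: once $e_{1}$ is known to be a root, build from it the rationally homogeneous LFIHD $\partial_{1}$ of Corollary~\ref{cor3.6}, extend it to the cover via Corollary~\ref{cor:2.6}, and compose $e^{x\partial'}\circ\big(e^{x\partial'_{1}}\big)^{-1}$ as automorphisms of $B[x]$; this yields a new homogeneous LFIHD on $A$ of the same shape whose lowest exponent is $s_{2}$, to which the $s_{1}$-argument applies again. (One could instead argue monomial by monomial, using that distinct powers of $\zeta$ are linearly independent over $\KK(t)$, but then one must also choose $(m,l)$ so that the relevant multinomial coefficient is nonzero modulo $p$ --- the same case distinction $p\nmid d$ versus $p\mid d$ that the paper makes --- and none of this appears in your sketch.) You flag exactly this point as ``where the argument must be handled most carefully'' but supply no mechanism for it, so as written the proof of $(a)$ only establishes the root condition for $s_{1}$.
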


\begin{proof}
  Assume that $\DD$ satisfies $(i)$ and fix an LFIHD $\partial$ on the
  algebra $A$ of horizontal type and of degree $e$. Let $B$ be the
  normalization of the subalgebra
  \[
  A\left[\zeta^{-dh(e)}\chi^{e}\right]\subseteq\KK(\zeta)[M].
  \]
  Consider the affine line $C' = \spec\KK[\zeta]$ and the polyhedral
  divisor $\DD' = (dv + \sigma)\cdot 0$ over $C'$. Since $d=\min\{r\in
  \ZZ_{>0}\mid re\in L\}$ (see Lemma~\ref{lm:5.4}~$(iv)$), the algebra
  $A[C',\DD']$ is precisely $B$ (see \cite[Theorem
  2.5]{La2}). According to Lemma~\ref{4.1}~$(ii)$ we have
  $e\in\sigma^{\vee}$ and so $A\left[\zeta^{-dh(e)}\chi^{e}\right]$ is
  a cyclic extension of the ring $A$. Since
  $\varphi_{de}\chi^{de}\in\ker\partial$ by Corollary~\ref{cor:2.6},
  $\partial$ extends to a unique LFIHD $\partial'$ on $B$. Using
  further that $dv\in N$ we obtain a natural isomorphism of $M$-graded
  algebras
  \[
  \varphi: B\rightarrow E, \qquad \zeta^{l}\chi^{m}\mapsto
  \zeta^{dh(m)+l} \chi^{m},
  \]
  where $E = \KK[\sigma^{\vee}_{M}][\zeta]$.  Consider
  $\varphi_{*}\partial'$ the homogeneous LFIHD of horizontal type on
  $E$ given by
  \[
  \varphi_{*}\partial'^{(i)} =
  \varphi\circ\partial'^{(i)}\circ\varphi^{-1},
  \]
  where $i\in\ZZ_{\geq 0}$.  Now, Lemma~\ref{lm:5.5}~$(iii)$ implies
  that $\ker\varphi_{*}\partial' = \KK[\sigma^{\vee}_{M}]$ so that
  $\varphi_{*}\partial' = \chi^{e}\cdot\partial_{\zeta}$ for some
  non-trivial LFIHD $\partial_{\zeta}$. An easy computation shows that
  the LFIHD $\partial = \varphi_{*}^{-1}(\varphi_{*}\partial')$ is as
  in \eqref{eq:7}.
  
  Assume that $\chara\KK=p>0$ and let us show that $(a)$ holds. By
  Proposition~\ref{sec:LFIHD}~$(d)$, the exponential map of
  $\partial_{\zeta}$ is given as in \eqref{eq:6} for some integers
  $0\leq s_{1}<\ldots < s_{r}$.  If $p$ does not divide $d$, then
  consider $l\in\ZZ_{\geq 0}\setminus p\ZZ$ such that $dl\geq
  p^{s_{1}}$. Note that $t^{l}\in A$. By Lemma~\ref{sec:line} and
  \eqref{eq:7} we obtain the equality
  \[
  \partial^{(p^{s_{1}})}(t^{l}) =
  \lambda_1dlt^{-1/d-h(p^{s_{1}}e)+l}\chi^{p^{s_{1}}e}\,.
  \]
  Since $\partial^{(p^{s_{1}})}(t^{l})\in A\setminus\{0\}$, it follows that
  $-1/d-h(p^{s_{1}}e)\in\ZZ$.

  Otherwise, assume that $p$ divide $d$. By the minimality of $d$
  there exists $m\in\sigma^{\vee}_{M}$ such that $dh(m)$ is not
  divisible by $p$. Taking $l\in\ZZ_{\geq 0}$ such that $dl\geq
  \max\{p^{s_{1}},-dh(m)\}$ we have $t^{l}\chi^{m}\in A\setminus\{0\}$
  and so Lemma~\ref{sec:line} implies
  \[
  \partial^{(p^{s_{1}})}(t^{l}\chi^{m}) =
  \lambda_1dh(m)t^{-1/d-h(p^{s_{1}}e)+l}\chi^{m+p^{s_{1}}e}\in A\setminus\{0\}\,.
  \]
  Hence in any case $e_{1} : = (p^{s_{1}}e, -1/d-h(p^{s_{1}}e))\in
  \widehat{M}$, where $\widehat{M} = M\times\ZZ$.

  Let us remark that
  \[
  A[C,\DD] =
  \bigoplus_{(m,l)\in\widehat{\sigma}^{\vee}_{\widehat{M}}}\KK\,\chi^{(m,l)}
  = \KK[\widehat{\sigma}^{\vee}_{\widehat{M}}],
  \]
  where $\chi^{(m,l)} = t^{l}\chi^{m}$ and $\widehat{\sigma}$ is the
  cone generated by $(v,1)$ and $(\sigma,0)$. Since $e\in
  \sigma^\vee$, an easy computation shows that $e_{1} = (p^{s_{1}}e,
  -1/d-h(p^{s_{1}}e))\in \rt\widehat{\sigma}$ for the distinguished
  ray $\rho = (dv,d)$. So by Corollary~\ref{cor3.6} the
  $\widehat{M}$-graded algebra $A$ admits rationally homogeneous
  LFIHDs of degree $e_{1}/p^{s_{1}}$ coming from the root $e_{1}$. One
  of such rationally homogeneous LFIHDs is given by the equality
  \[
  e^{x\partial_{1}}(t^{l}\chi^{m}) = \sum_{i =
    0}^{\infty}\binom{d(l+h(m))}{i}
  \lambda_{1}^{i}t^{l-i(1/d+h(p^{s_{1}}e))}\chi^{m+ip^{s_{1}}e}x^{ip^{s_{1}}},
  \]
  where $\lambda_{1}\in\KK^{*}$ is as \eqref{eq:6}. Furthermore, by
  Corollary~\ref{cor:2.6} we extend $\partial_{1}$ to a homogeneous
  LFIHD $\partial_{1}'$ on the $M$-graded algebra $B$.  Assume that
  $r\geq 2$. One can see $e^{x\partial'}$ and $e^{x\partial_{1}'}$ as
  automorphisms of the algebra $B[x]$ by letting $e^{x\partial'}(x) =
  e^{x\partial'_{1}}(x) = x$.  Hence, using this convention we have
  \[
  e^{x\partial'}\circ (e^{x\partial'_{1}})^{-1} =
  e^{x\varphi_{*}^{-1}(\chi^{e}\partial_{\zeta,1})},
  \]
  where $\partial_{\zeta,1}$ is the LFIHD on $\KK[\zeta]$ defined by
  \[
  e^{x\partial_{\zeta,1}}(\zeta) = \zeta + \sum_{i =
    2}^{r}\lambda_{i}x^{p^{s_{i}}}\,.
  \]
  Consequently, the map
  $e^{x\partial'}\circ (e^{x\partial'_{1}})^{-1}$ yields a homogeneous
  LFIHD $\partial''_{1}$ on $A$. Actually, replacing
  $\partial_{\zeta,1}$ by $\partial_{\zeta}$, the LIFHD
  $\partial''_{1}$ satisfies \eqref{eq:7}. Again, it follows that
  $e_{2}:= (p^{s_{2}}e, -1/d-h(p^{s_{2}}e))\in \hat{M}$ is a root of
  $\widehat{\sigma}$. One concludes by induction that $(a)$ holds.
 
  If $\chara\KK=0$, then the locally nilpotent derivation
  $\partial^{(1)}_{\zeta}$ on the algebra $\KK[\zeta]$ is equal to
  $\lambda \frac{\partial}{\partial \zeta}$ for some
  $\lambda\in\KK^{*}$.  Using \eqref{eq:7} we have
  \[
  \partial^{(1)}(t) = \lambda dt^{-1/d-h(e) + 1}\chi^{e}\in
  A\setminus\{0\}
  \]
  and so assertion $(b)$ holds. This concludes the proof in the case
  where condition $(i)$ holds.

  Assume now that $(ii)$ holds. Let $A'$ be the normalization of
  $A[t]$ in the field $\fract A$.  By Lemma~\ref{lm:5.5}~$(iii)$ , we
  have $ d\cdot M = h^{-1}(\ZZ) \subseteq L$, where $L$ is the
  sublattice of $M$ generated by the set of weights of
  $\ker \partial$.  Hence, changing $\partial$ by
  $\varphi_{m}\cdot\partial$ for $m\in\sigma^{\vee}_{d\cdot M}$,
  without loss of generality, we may assume
  $e\in\sigma^{\vee}_M$.

  More precisely, replacing $e$ by $e + m$ for some
  $m\in\sigma^{\vee}_{d\cdot M}$ does not change assertions $(a),(b)$
  in the Theorem. With this new assumption, again by
  Lemma~\ref{lm:5.5}, we extend $\partial$ to a homogeneous LFIHD
  $\bar{\partial}$ on $A'$ of horizontal type. By the previous
  argument (the case where $C = \AF^{1}_{\KK}$) applied to
  $(A',\bar{\partial})$ and since $\bar\partial$ stabilizes
  $\KK[\widehat{\sigma}^\vee\cap\widehat{M}]$ we obtain $(a)$ and
  $(b)$.

  It remains to show that if a lattice vector $e$ verifies assertions
  $(a),(b)$, then one can build a homogeneous LFIHD on $A = A[C,\DD]$
  of horizontal type and of degree $e$ as in \eqref{eq:7}. Assume that
  $\chara\KK>0$ and let $e_{i} = (e,-1/d-h(p^{s_{i}}e))$. By $(a)$ we
  have $e_{i}\in\rt \widehat{\sigma}$ and we can consider the
  rationally homogeneous LFIHDs
  $\partial_{e_{1},s_{1}},\ldots, \partial_{e_{r},s_{r}}$ on the
  semigroup algebra $\KK[\widehat{\sigma}^{\vee}_{\widehat{M}}]$ (see
  Example~\ref{sec:ex3.2}). Using the isomorphism $\varphi$ and
  considering every $e^{x\partial_{e_{i},s_{i}}}$ as automorphism of
  the ring $A[x]$, a computation shows that the composition
  \[
  e^{x\partial_{e_{1},s_{1}}}\circ e^{x\partial_{e_{2},s_{2}}}\circ
  \ldots\circ e^{x\partial_{e_{r},s_{r}}}
  \]
  defines an LFIHD as in \eqref{eq:7}. In the case where $\chara\KK\rm
  =0$, a similar argument can be applied (see also \cite[Examples 3.20
  and 3.21]{Li}). We leave the details to the reader.
\end{proof}

For the proof of our next lemma, which is the last ingredient for our
main theorem, we need the following remark.

\begin{remarque} \label{rm:5.9} %
  Assume that $\KK$ is perfect and let $r\in \ZZ_{>0}$. Then the
  Frobenius map $F:\KK\rightarrow \KK$ mapping $\lambda\mapsto
  \lambda^{p^r}$ is a field automorphism. Let $t$ be a new variable
  and let $x=t^{p^r}$. We will compute the ramification of the field
  extension $\KK(t)/\KK(x)$. Let $P(x)=\sum a_ix^i\in \KK[x]$ be an
  irreducible polynomial. Then
  \[P(x)=P(t^{p^r})=\left(F^*(P)(t)\right)^{p^r},
  \quad\mbox{where}\quad F^*(P)(t)=\sum F^{-1}(a_i)t^i\,.\]

  Hence $F^*(P)(t)$ is irreducible in $\KK[t]$. Let $C$ and $C'$ be
  unique projective curves over $\KK$ whose function fields are
  $\KK(t)$ and $\KK(x)$, respectively (both isomorphic to
  $\PP^1_\KK)$. The inclusion $\KK(x)\subseteq\KK(t)$ induces a purely
  inseparable morphism $\pi:C\rightarrow C'$. Our previous computation
  shows that for every $z\in C$ the pullback of $z$ as Weil divisor is
  given by $\pi^*(z)=p^r\cdot z'$, where $z'\in C'$ lies in the
  schematic fiber of $z$.
\end{remarque}

Let $\DD=\sum_{z\in C} \Delta_z\cdot z$ be proper $\sigma$-polyhedral
divisor over a regular curve $C$. Recall that $h_z$ stands for the
support function of the $\sigma$-polyhedron $\Delta_z$ for all $z\in
C$, see Definition~\ref{sec:def-convex}.

\begin{lemme}
  Assume that $\KK$ is perfect. Let $\DD$ be a proper
  $\sigma$-polyhedral divisor over $C = \AF^{1}_{\KK}$ or $C =
  \PP^{1}_{\KK}$, respectively. Assume that there exists a maximal
  cone $\omega$ on the quasifan $\Lambda(\DD)$ or
  $\Lambda(\DD_{|\AF^{1}_{\KK}})$, respectively, such that for any
  $z\in C$ different from $0$ and $\infty$ we have $h_{z}|_{\omega} =
  0$. Let $\partial$ be an LFIHD of degree $e$ on the algebra
  $A[C,\DD_{\omega}]$ given by formula~\eqref{eq:7}. Let $p=\chara\KK$
  if $\chara\KK>0$ and $p = 1$ if $\chara\KK=0$. Then $\partial$
  extends to an LFIHD on $A = A[C,\DD]$ if and only if for any
  $m\in\sigma^{\vee}_{M}$ such that $m + p^{s_{1}}e\in
  \sigma^{\vee}_{M}$ the following hold.
\begin{enumerate}[$(i)$]
\item If $h_{z}(m+p^{s_{1}}e)\neq 0$, then $\lfloor
  p^kh_{z}(m+p^{s_{1}}e)\rfloor - \lfloor p^kh_{z}(m)\rfloor\geq 1$,
  $\forall z\in C$, $z\neq 0,\infty$.
\item If $h_{0}(m+p^{s_{1}}e)\neq h(m + p^{s_{1}}e)$, then $\lfloor
  dh_{0}(m+p^{s_{1}}e)\rfloor - \lfloor dh_{0}(m)\rfloor\geq 1 +
  dh(p^{s_{1}}e)$.
\item If $C = \PP^{1}_{\KK}$, then $\lfloor
  dh_{\infty}(m+p^{s_{1}}e)\rfloor - \lfloor dh_{\infty}(m)\rfloor
  \geq -1 - dh(p^{s_{1}}e)$.
\end{enumerate}
Here $h$ is the linear extension of $h_{0}|_{\omega}$ to $M_\RR$,
$d\in\ZZ_{>0}$ is the smallest positive integer such that $dh$ is
integral and $k$ is the unique non-negative integer such that
$d=d'p^k$ with $\gcd(d',p)=1$. 
\end{lemme}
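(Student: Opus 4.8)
The plan is to turn the extension problem into a pole-order estimate on the purely inseparable cover $\zeta^{d}=t$ and then read off the three inequalities point by point. First I would record the basic equivalence: since $\partial$ is already an iterative higher derivation on $A[C,\DD_{\omega}]$ and, through formula~\eqref{eq:7}, on the ambient algebra $\KK(\zeta)[M]$, asking that it extend to $A=A[C,\DD]$ is exactly asking that $A$ be $\partial$-invariant, i.e. $\partial^{(i)}(f\chi^{m})\in A$ for every homogeneous $f\chi^{m}\in A$ and every $i$; local finiteness, the Leibniz rule and the iterative relation are then inherited from $A[C,\DD_{\omega}]$. So the whole problem reduces to invariance of $A$ on homogeneous elements.

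Second, I would pass to $C'=\spec\KK[\zeta]$ with $\zeta=\sqrt[d]{t}$ and $d=d'p^{k}$, $\gcd(d',p)=1$. Using that $\KK$ is perfect, Remark~\ref{rm:5.9} and Lemma~\ref{lm1.9} describe the ramification of $\KK(\zeta)/\KK(t)$: the points $z\neq 0,\infty$ are separably unramified but purely inseparably ramified with index $p^{k}$, while $0$ and $\infty$ are totally ramified with index $d$. Pulling $\DD$ back along this cover turns the local bound $\ord_{z}f\geq-\lfloor h_{z}(m)\rfloor$ at $z\neq 0,\infty$ into the $\zeta$-order bound $-\lfloor p^{k}h_{z}(m)\rfloor$, and the bounds at $0,\infty$ into $-\lfloor dh_{0}(m)\rfloor$ and $-\lfloor dh_{\infty}(m)\rfloor$; this is the source of the factors $p^{k}$ and $d$ in (i)--(iii). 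I would make this precise so that membership $\partial^{(i)}(f\chi^{m})\in A$ becomes a family of $\zeta$-order inequalities at the points of $C'$.

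Third, I would compute the leading behaviour of $\partial$ through $\partial_{\zeta}$. By Proposition~\ref{sec:LFIHD}~$(d)$ and Lemma~\ref{sec:line}, the lowest jump $\partial_{\zeta}^{(p^{s_{1}})}$ acts on $\KK[\zeta]$ as $\lambda_{1}\,\tfrac{\mathrm d}{\mathrm d\zeta}$, so in $e^{x\partial}$ the smallest exponent of $x$ that moves a given element raises the $\zeta$-pole by exactly one unit; combined with the multiplier $\zeta^{-dh(m+ie)}$ from~\eqref{eq:7} this yields, at each point, an inequality comparing the bounds at $m$ and at the shifted weight $m+p^{s_{1}}e$. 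Because every $h_{z}$ is positively homogeneous and (along the relevant segment) piecewise linear, the increment of the pole bound grows with the size of the jump, so among $p^{s_{1}}<\dots<p^{s_{r}}$ the smallest one is binding and the others follow; this is why a single representative condition at $p^{s_{1}}$ suffices. Specializing to $z\neq 0,\infty$ gives (i); specializing to $z=0$, where I must separate the contribution of $\DD_{\omega}$ by comparing $h_{0}$ with its linear extension $h$ (accounting for the hypothesis $h_{0}(m+p^{s_{1}}e)\neq h(m+p^{s_{1}}e)$ and the constant $1+dh(p^{s_{1}}e)$ coming from the $\zeta$-exponent $-1-dh(p^{s_{1}}e)$), gives (ii); and specializing to $z=\infty$, where the shift reverses sign, gives (iii). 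For $C=\PP^{1}_{\KK}$ I would first use Lemma~\ref{lm:5.5}~(i),(ii) to pass to the normalization of $A[t]$ over $\AF^{1}_{\KK}$, extracting (i) and (ii) from the affine analysis, and then treat $\infty$ separately for (iii). The converse direction runs the same computation backwards: assuming (i)--(iii), the image $\partial^{(i)}(f\chi^{m})$ meets all local bounds defining $A$, and the construction in Theorem~\ref{th:5.8} (realizing $e^{x\partial}$ as the composite of the one-root LFIHDs $\partial_{e_{j},s_{j}}$) guarantees the extended sequence is again a horizontal LFIHD.

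The hard part will be this third step: tracking the floor functions under $m\mapsto m+p^{s_{1}}e$ together with the two distinct ramification indices, and justifying rigorously that the smallest jump $p^{s_{1}}$ gives the binding inequality rather than forcing one condition per jump. The delicate point is the appearance of the purely inseparable index $p^{k}$, rather than the full $d$, away from $0$ and $\infty$; this must be extracted cleanly from the perfectness of $\KK$ together with the identity $\bigl(\zeta+\sum_{i}\lambda_{i}x^{p^{s_{i}}}\bigr)^{p^{k}}=\zeta^{p^{k}}+\sum_{i}\lambda_{i}^{p^{k}}x^{p^{s_{i}+k}}$, which governs how $\partial$ acts on the subfield $\KK(t)=\KK(\zeta^{d})\subseteq\KK(\zeta^{p^{k}})$ and hence how the derivative interacts with the poles of $f$ at the inseparably ramified points.
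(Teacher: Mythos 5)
Your plan follows the paper's own proof essentially step for step: reduce the extension problem to $\partial$-invariance of $A$ on homogeneous elements, pass to the cyclic cover $\zeta^{d}=t$ whose ramification (Remark~\ref{rm:5.9}, Lemma~\ref{lm1.9}) produces the index $p^{k}$ away from $0,\infty$ and $d$ at $0,\infty$, and read off the three inequalities from the $\zeta$-order drop of $\partial_{\zeta}^{(p^{s_{1}})}$ via Lemma~\ref{sec:line}, the smallest jump being binding by the telescoping convexity argument you flag as the hard part. The one point to be careful about --- which the paper handles by first treating the cases $h=0$ and $h$ integral and by choosing witness weights $m$ for which the relevant binomial coefficient does not vanish --- is that $\partial_{\zeta}^{(p^{s_{1}})}(\zeta^{\ell})=\lambda_{1}\ell\,\zeta^{\ell-1}$ is zero when $p$ divides $\ell$, so the first jump does not always raise the pole by exactly one unit, and the direction ``stabilization implies (i)--(iii)'' requires exhibiting elements where equality in the order estimate is actually attained.
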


\begin{proof}
  Considering $m\in\sigma^{\vee}_{M}$ we can write $h(m) = \langle m,
  v\rangle$ for some $v\in N_{\QQ}$.  Since every $h_{z}$ is upper
  convex, $h_{z}(m)\leq 0$ $\forall z\in C\setminus\{0,\infty\}$, and
  obviously $h_{0}(m)\leq h(m)$.  Letting
  \[
  A_{M} = \bigoplus_{m\in M}\KK[t]\cdot\varphi_{m}\chi^{m},
  \]
  where $\varphi_{m} = t^{-\lfloor h(m)\rfloor}$ and localizing by a
  homogeneous element of $\ker\partial$, by Lemma~\ref{sec:mult-syst},
  $\partial$ extends to a homogeneous LFHID on $A_{M}$. We also denote
  this extension by $\partial$. Hence, $\partial$ extends to an LFIHD
  on $A$ if and only if the extension $\partial$ on $A_{M}$ stabilizes
  $A$. In addition, we may assume that $\KK=\bar\KK$ is algebraically
  closed since the extension $\partial_{\bar\KK}$ of $\partial$ on
  $A_M\otimes_\KK\bar\KK$ stabilizes $A\otimes_\KK\bar\KK$ if and only
  if $\partial$ stabilizes $A$.

  For the characteristic zero case, the proof is available in
  \cite[Lemma~3.26]{Li}. In the sequel, we assume $\chara\KK = p >0$.
  The proof is divided into three steps, (similar to
  \cite[Lemma~3.26]{Li}) where we assume $h=0$, $h(m)$ integral for
  all $m$ and finishing with the general case.

  \medskip
  
  \emph{Case $h = 0$.}  In this case we have $d = 1$, $L = M$ and by
  Theorem~\ref{th:5.8}, $\partial =\chi^{e}\partial_{t}$ for some
  LFIHD $\partial_{t}$ on $\KK[t]$. By
  Proposition~\ref{sec:LFIHD}~$(d)$, the LFIHD $\partial_{t}$ is
  determined by a sequence of integers $0\leq s_{1}<\ldots < s_{r}$.
  Furthermore, since $h_{z}\leq 0$ for any $z\in\AF^{1}_{\KK}$, then
  $h_{\infty}\geq 0$ in the elliptic case.  Fixing
  $m\in\sigma^{\vee}_{M}$ such that
  $m + p^{s_{1}}e\in\sigma^{\vee}_{M}$ the conditions of our lemma
  become:
  \begin{enumerate}
  \item[$(i')$] If $h_{z}(m+p^{s_{1}}e)\neq 0$, then $\lfloor h_{z}(m +
    p^{s_{1}}e)\rfloor - \lfloor h_{z}(m)\rfloor \geq 1$ $\forall
    z\in\AF^{1}_{\KK}$.
  \item[$(iii')$] If $C = \PP^{1}_{\KK}$, then $\lfloor
    h_{\infty}(m+p^{s_{1}}e)\rfloor - \lfloor h_{\infty}(m)\rfloor
    \geq -1$.
  \end{enumerate}

  Under the above assumption we have
  \[
  A_{m} = H^{0}(C,\OO_{C}(\DD(m)))\subseteq \KK[t]
  \]
  and $\partial$ stabilizes $A$ if and only if
  \[
  f(t)\in A_{m}\Rightarrow \partial^{(i)}_{t}(f(t))\in A_{m + ie},
  \forall m\in\sigma^{\vee}_{M},\quad \forall i\in\ZZ_{\geq 0}\,,
  \]
  or equivalently,
  \[
  \divi f + \lfloor \DD(m)\rfloor \geq 0\Rightarrow
  \divi\partial_{t}^{(i)}(f) + \lfloor\DD(m+ie)\rfloor\geq 0, \quad
  \forall m\in\sigma^{\vee}_{M},\ \forall i\in\ZZ_{\geq 0}\,.
  \]
  This is also equivalent to
  \begin{align}\label{eq:8}
    \ord_{z}f + \lfloor h_{z}(m)\rfloor \geq 0\Rightarrow
    \ord_{z}\partial_{t}^{(i)}(f) + \lfloor h_{z}(m + ie)\rfloor
    \geq 0,\quad \forall m\in\sigma^{\vee}_{M},\ \forall i\in\ZZ_{\geq 0},\ 
    \forall z\in C\,. 
  \end{align}

  \medskip
  
  We will first show the lemma in the case where $C=\AF^1_\KK$. Let us
  show first that $(i')$ implies \eqref{eq:8} and so $\partial$
  stabilizes $A$.  If $h_{z}(m+p^{s_{1}}e)\neq 0$ with
  $m\in\sigma^{\vee}_{M}$ such that
  $m+p^{s_{1}}e\in\sigma^{\vee}_{M}$. Then we have $h_{z}(m)\neq 0$ so
  that $f\in (t-z)\KK[t]$.

  Let $i\in\ZZ_{\geq 0}$. If $\partial^{(i)}_{t}(f) = 0$, then
  $\partial^{(i)}_{t}(f)\in A_{m+ie}$.  Otherwise,
  $\partial^{(i)}_{t}(f)\neq 0$ and so $m+ie\in\sigma^{\vee}$.
  Letting $i = lp^{s_{1}}$ for some $l\in\ZZ_{\geq 0}$, we have
  $\ord_z\partial^{(i)}_{t}(f)\geq \ord_z(f)-l$. Hence it follows that
  \[
  \ord_{z}\partial^{(i)}(f) + \lfloor h_{z}(m + ie)\rfloor \geq
  \ord_{z}(f) + \lfloor h_{z}(m)\rfloor + (\lfloor h_{z}(m +
  lp^{s_{1}}e)\rfloor - \lfloor h_{z}(m)\rfloor -l).
  \]

  By convexity of $\sigma^{\vee}$ for $1\leq j\leq l$ we have $m +
  jp^{s_{1}}e\in \sigma^{\vee}$. If $h_z(m+ie)=0$, then
  $\ord_z\partial^{(i)}(f)+\lfloor h_{z}(m+ie)\rfloor\geq 0$
  and~\eqref{eq:8} holds. Otherwise, $h_z(m+ie)\neq 0$ and again
  $h_z\big(m+(l-j)p^{s_{1}}e\big)\neq 0$ for $1\leq j\leq l$.
  Combining the previous inequality with $(i')$, and the fact that
  $\ord_{z}f + \lfloor h_{z}(m)\rfloor \geq 0$ we obtain
  \begin{align*}
    \ord_{z}\partial^{(i)}(f) + \lfloor h_{z}(m + ie)\rfloor
    \geq &\ord_{z}(f) + \lfloor h_{z}(m)\rfloor + \\
    &\sum_{j = 1}^{l}(\lfloor h_{z}(m + (l-j)p^{s_{1}}e +
    p^{s_{1}}e)\rfloor - \lfloor h_{z}(m + (l-j)p^{s_{1}}e
    )\rfloor -1)\geq 0\,.
  \end{align*}
  This yields \eqref{eq:8} in the case where $C=\AF^1_\KK$.

  \medskip
  
  Now, we show the converse. Assume that $C=\AF^1_\KK$ and that
  $\partial$ stabilizes $A$. Recall that $\partial$ stabilizes $A$ if
  and only if \eqref{eq:8} holds. If $\omega$ is the unique maximal
  cone in $\Lambda(\DD)$, then $h_{z}$ is identically zero for all
  $z\in C$ and so $(i')$ is trivially satisfied. Therefore the lemma
  follows in this case.
  
  In the sequel, we assume that $\Lambda(\DD)$ has at least two
  maximal cones. Let $\omega_{0}\in \Lambda(\DD)$ be a maximal cone
  different from $\omega$. Then there exists a lattice vector
  $m\in \relint\omega_{0}$ such that $h_{z}(m)\in\ZZ$ and
  $\partial^{(lp^{s_{1}})}(\varphi_{m})\neq 0$ for some
  $l\in\ZZ_{\geq 0}$. Note that here
  $\ker \partial =
  \bigoplus_{m\in\omega_{M}}\KK\cdot\varphi_{m}\chi^{m}$.
  Taking $m$ big enough we may suppose that $-h_{z}(m)\geq lp^{s_{1}}$
  and by Lemma~\ref{sec:line} we may suppose that
  \[
  \ord_{z}\partial_{t}^{(lp^{s_{1}})}(\varphi_{m})=-h_{z}(m) - l.
  \]
  By \eqref{eq:8} we have
  \begin{align}
    \label{eq:9}
    \lfloor h_{z}(m + lp^{s_{1}}e)\rfloor - h_{z}(m) - l \geq 0.
  \end{align}
  Letting $\bar{h}_{z}$ be the linear extension of
  $h_{z}|_{\omega_{0}}$ we have
  \begin{align}
    \label{eq:10}
    \lfloor h_{z}(m + lp^{s_{1}}e)\rfloor = \lfloor h_{z}(m) +
    l\bar{h}_{z}(p^{s_{1}}e)\rfloor = h_{z}(m) + \lfloor
    l\bar{h}_{z}(p^{s_{1}}e)\rfloor\,.
  \end{align}
  Now, \eqref{eq:9} and \eqref{eq:10} yield
  \[
  l\bar{h}_{z}(p^{s_{1}}e)\geq \lfloor l\bar{h}_{z}(p^{s_{1}}e)\rfloor
  \geq l
  \]
  and so $\bar{h}_{z}(p^{s_{1}}e)\geq 1$. Finally, letting
  $m\in\sigma^{\vee}_{M}$, we obtain
  \[
  \lfloor h_{z}(m + p^{s_{1}}e)\rfloor \geq \lfloor h_{z}(m)\rfloor +
  \lfloor \bar{h}_{z}(p^{s_{1}}e)\rfloor\geq \lfloor h_{z}(m)\rfloor +
  1\,.
  \]
  This yields $(i')$ and so concludes the proof of the lemma in the
  case where $C=\AF^1_\KK$.

  \medskip
  
  Assume now that $C=\PP^1_\KK$. Then for $z\in C\setminus\{\infty\}$
  and for any $m\in\sigma^{\vee}_{M}$ such that $A_{m}\neq 0$, we can
  find $\varphi_{m,z}\in A_{m}$ satisfying
  $\ord_{z}(\varphi_{m,z})+\lfloor h_{z}(m)\rfloor=0$.  Replacing
  $\varphi_{m}$ by $\varphi_{m,z}$ in the previous argument and using
  Lemma~\ref{sec:line} for $z=\infty$ in an analog way as in the above
  proof, we obtain the equivalence between \eqref{eq:8} and
  $(i'),(iii')$.

  \medskip
  
  \emph{Case $h$ integral}. Again in this case we have $d=1$. Let
  $v\in N$ be such that $\langle m,v\rangle=h(m)$ for all $m\in
  \omega_M$.  Let us consider the polyhedral divisor defined by $\DD'
  = \DD + (-v+\sigma)\cdot 0$ if $C$ is affine, and by $\DD' = \DD +
  (-v+\sigma)\cdot 0 + (v+\sigma)\cdot\infty$ if $C$ is
  projective. Now $A$ is equivariantly isomorphic to $A[C,\DD']$ and
  $A[C,\DD']$ is as in the case where $h=0$.  Conjugating $\partial$
  by the equivariant isomorphism $A\simeq A[C,\DD']$ (see
  \cite[Proposition~4.5]{La2}), the algebra $A$ is
  $\partial$-invariant if and only if assertions $(i'), (iii')$ hold
  for the polyhedral divisor $\DD'$. An easy computation shows that
  this is equivalent to $\DD$ satisfying $(i),(ii),(iii)$.

  \emph{General case.} Now, we assume that $h$ is not integral, i.e.,
  that $d>1$. Let us consider the normalization $B$ of the cyclic
  extension $A[\zeta^{-dh(w)}\chi^w]\subseteq \KK(\zeta)[M]$, where
  $\zeta^d=t$ and $w\in\relint(\omega)\cap M$ satisfies
  $\gcd(dh(w),d)=1$. We remark that $B$ is naturally
  $M$-graded. Furthermore,
  \[K'_0=\left\{\frac{a}{b}\mid a,b\in B_m,\ m\in M,\mbox{ and }b\neq
    0\right\}=\KK(\zeta)\,.\] %
  Hence, $B=A[C',\DD']$, where $C'\simeq \PP^1_\KK$ if $A$ is elliptic
  and $C'\simeq \AF^1_\KK$ otherwise. We let $k$ and $d'$ be the
  unique pair of positive integers such that $d=d'p^k$ with
  $\gcd(d',p)=1$. Let $\pi:C'\rightarrow C$ be the morphism induced by
  the field inclusion $K_0=\KK(t)\subseteq \KK(\zeta)=K'_0$. Then by
  Lemma~\ref{lm1.9}, Remark~\ref{rm:5.9} and \cite[Section~3.12,
  Exercise~3.8]{St}, we obtain
  \[\DD'=
  \begin{cases}
    d\cdot \Delta_0\cdot[0]+\sum_{z'\in
      C'\setminus\{0\}}p^k\cdot\Delta_{z}\cdot z', & \mbox{if }
    C=\AF^1_\KK \\
    d\cdot \Delta_0\cdot[0]+\sum_{z'\in
      C'\setminus\{0,\infty\}}p^k\cdot\Delta_{z}\cdot z'+d\cdot
    \Delta_\infty\cdot[\infty], & \mbox{if } C=\PP^1_\KK

  \end{cases}
  \]
  
  This yields $h'_0=dh_0$, $h'_\infty=dh_\infty$ and
  $h'_{z'}=p^kh_{z}$, where $\pi(z')=z$ and $h'_{z'}$ is the support
  function of the coefficient $\Delta'_{z'}$ of $\DD'$ at
  $z'$. Moreover, $h'_0|_\omega$ is integral and so the algebra $B$
  satisfies the conditions of the previous case ($h$ integral). We let
  $h':M_\RR\rightarrow \RR$ be the linear extension of $h'_0|_\omega$.

  Let
  \[B_M=\bigoplus_{m\in M}\varphi'_m\cdot\KK[\zeta]\cdot\chi^m,
  \quad\mbox{where}\quad \varphi'_m=\zeta^{-dh(m)}\,.\] %
  Since $A_M\subseteq B_M$ is a cyclic extension, by
  Corollary~\ref{cor:2.6} the LFIHD $\partial$ on $A_M$ extends to an
  LFIHD $\partial'$ on $B_M$. Furthermore, $\partial$ stabilizes $A$
  if and only if $\partial'$ stabilizes $B$ (see the argument in
  \cite[Lemma~3.26]{Li}).

  By the previous case, $B$ is stabilized by $\partial'$ if and only if
  for every $m\in \sigma^\vee_M$ such that $m+p^{s_1}e\in
  \sigma^\vee_M$, the following conditions are satisfied.
  \begin{enumerate}[$(i)$]
  \item[$(i'')$] If $h'_{z'}(m+p^{s_{1}}e)\neq 0$, then $\lfloor
    h'_{z'}(m+p^{s_{1}}e)\rfloor - \lfloor h'_{z'}(m)\rfloor\geq 1$,
    $\forall z'\in C'$, $z'\neq 0,\infty$.
  \item[$(ii'')$] If $h'_{0}(m+p^{s_{1}}e)\neq h'(m + p^{s_{1}}e)$, then $\lfloor
    h'_{0}(m+p^{s_{1}}e)\rfloor - \lfloor h'_{0}(m)\rfloor\geq 1 +
    dh'(p^{s_{1}}e)$.
  \item[$(iii'')$] If $C = \PP^{1}_{\KK}$, then $\lfloor
    h'_{\infty}(m+p^{s_{1}}e)\rfloor - \lfloor h'_{\infty}(m)\rfloor
    \geq -1 - h'(p^{s_{1}}e)$.
  \end{enumerate}
  Now, the lemma follows replacing $h'$ by $dh$, $h'_0$ by $dh_0$,
  $h'_\infty$ by $dh_\infty$ and $h'_z$ by $p^kh_z$ for all $z'\in
  C'$, $z\neq 0,\infty$.
\end{proof}

The following is our main result in this section. It gives a
classification of horizontal LFIHDs on affine $\TT$-varieties of
complexity one over a perfect field. It is a direct consequence of the
results in this section.

\begin{theorem} \label{th:5.11}
  Assume that the base field $\KK$ is perfect.  Let $p=\chara\KK$ if
  $\chara\KK>0$ and $p = 1$ if $\chara\KK=0$. Let $\DD$ be a proper
  $\sigma$-polyhedral divisor over a regular curve $C$ and let
  $A=A[C,\DD]$. Let $\omega\subseteq M_{\RR}$ be a rational cone and let
  $e\in M$ be a lattice vector.

  Then there exists a homogeneous LFIHD on $A$ of horizontal type with
  $\deg\partial = e$ and with $\omega$ as weight cone of
  $\ker\partial$ if and only if the following conditions hold.
  \begin{enumerate}
  \item[$(i)$] $C = \AF^{1}_{\KK}$ or $C = \PP^{1}_{\KK}$.
  \item[$(ii)$] If $C = \AF^{1}_{\KK}$, then $\omega$ is a maximal cone
    in the quasifan $\Lambda(\DD)$, and there exists a rational point
    $z_{0}\in C$ such that $h_{z|\omega}$ is integral $\forall z\in C,
    z\neq z_{0}$.
  \item[$(ii')$] If $C = \PP^{1}_{\KK}$, then there exists a rational
    point $z_{\infty}$ such that $(ii)$ holds for $C_{0}
    :=\PP^{1}_{\KK}\setminus \{z_{\infty}\}$.
  \end{enumerate}
  Without loss of generality, we may suppose that $z_{0} = 0$,
  $z_{\infty} = \infty$, and $h_{z}|_\omega = 0$ $\forall z\in C,
  z\neq 0,\infty$. Let also $h$ be the linear extension of
  $h_{0}|_\omega$ to $M_\RR$ given by $h(m)=\langle m,v\rangle$ for
  some $v\in N_\QQ$, let $d>0$ be the smallest integer such that $dh$
  is integral and let $k$ be the unique non-negative integer such that
  $d=d'p^k$, with $\gcd(d',p)=1$. Let $\tau=\omega^\vee$ and denote by
  $\widehat{\tau}$ the cone in $\widehat{N}_\RR$ generated by $(v,1)$
  and $(\tau,0)$ if $C=\AF_\KK^1$ and by $(v,1)$,  $(\tau,0)$ and
  $(\Delta_\infty,-1)$ if $C=\PP_\KK^1$.
  \begin{enumerate}
  \item [$(iii)$] There exists $s_1\in \ZZ_{\geq 0}$ such that
    $\big(p^{s_1}e,-1/d - h(p^{s_1}e)\big)\in\rt\widehat{\tau}$.
  \end{enumerate}
  For any $m\in\sigma^{\vee}_{M}$ such that
  $m+p^{s_{1}}e\in\sigma^{\vee}_{M}$ the following hold.
  \begin{enumerate}
  \item[$(iv)$] If $h_{z}(m+p^{s_{1}}e)\neq 0$, then $\lfloor
    p^kh_{z}(m+p^{s_{1}}e)\rfloor - \lfloor p^kh_{z}(m)\rfloor\geq 1$,
    $\forall z\in C, z\neq0,\infty$.
  \item[$(v)$] If $h_{0}(m+p^{s_{1}}e)\neq h(m + p^{s_{1}}e)$, then
    $\lfloor dh_{0}(m+p^{s_{1}}e)\rfloor - \lfloor
    dh_{0}(m)\rfloor\geq 1 + dh(p^{s_{1}}e)$.
  \item[$(vi)$] If $C = \PP^{1}_{\KK}$, then $\lfloor
    dh_{\infty}(m+p^{s_{1}}e)\rfloor - \lfloor dh_{\infty}(m)\rfloor
    \geq -1 - dh(p^{s_{1}}e)$.
  \end{enumerate}
  More precisely, all possible homogeneous LFIHD $\partial$ on $A$ of
  horizontal type with $e, \omega$ satisfying $(i)-(iv)$ are given by
  the formula \eqref{eq:7} in Theorem~\ref{th:5.8}. If $\chara\KK>0$,
  then $\partial$ is described by a sequence of integers $0\leq
  s_{1}<s_{2}<\ldots <s_{r},$ where every $\big(p^{s_i}e,-1/d -
  h(p^{s_i}e)\big)$ belongs to $\rt\widehat{\tau}$. Moreover,
  \[
  \ker\partial = \bigoplus_{m\in\omega_{L}}\KK\varphi_{m}\chi^{m},
  \]
  where $L = h^{-1}(\ZZ)$ and $\varphi_{m}\in A_{m}$ satisfies the
  relation
  \[
  \divi\varphi_{m} + \DD(m)=0\quad \mbox{if}\quad C =
  \AF^{1}_{\KK};\qquad \mbox{or}\qquad(\divi\varphi_{m})|_{C_{0}} +
  \DD(m)|_{C_{0}}=0\quad \mbox{if}\quad C = \PP^{1}_{\KK}.
  \]
\end{theorem}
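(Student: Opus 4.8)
The plan is to assemble the results of this section, reading off each labelled condition from the corresponding statement and splitting the argument into the two implications, then recording the description of $\ker\partial$. The whole proof is bookkeeping: tracking the combinatorial data $(\tau,\widehat{\tau},d,k)$ as it passes between Theorem~\ref{th:5.8}, the preceding lemma, and the present statement.

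For the forward implication I would start from a non-trivial homogeneous LFIHD $\partial$ of horizontal type on $A$ with $\deg\partial = e$ and with $\omega$ the weight cone of $\ker\partial$. Lemma~\ref{lem:5.2} immediately gives $C \simeq \AF^{1}_{\KK}$ in the non-elliptic case and $C \simeq \PP^{1}_{\KK}$ in the elliptic case, which is $(i)$, and that $\ker\partial$ is a semigroup algebra. Lemma~\ref{lm:5.4}~$(iii)$ shows $\omega$ is a maximal cone of the quasifan $\Lambda(\DD)$, respectively of $\Lambda(\DD|_{\PP^{1}_{\KK}\setminus\{z_{\infty}\}})$, while parts $(i),(ii),(v)$ together with Corollary~\ref{cor:5.6} force the supports of the relevant fractional parts to consist of rational points. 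After translating these points to $0$ and $\infty$ and replacing $\DD$ by the equivalent $\tau$-polyhedral divisor $\DD_{\omega}$ produced by the Corollary following Corollary~\ref{cor:5.6}, I obtain $(ii)$ and $(ii')$ together with the normalization $h_{z}|_{\omega} = 0$ for $z \neq 0,\infty$, where $\tau = \omega^{\vee}$. This places $A_{\omega} = A[C,\DD_{\omega}]$ in exactly the situation of Theorem~\ref{th:5.8} with $\sigma$ replaced by $\tau$; after multiplying $\partial$ by a homogeneous kernel element so that $e\in\omega$ (as in Lemma~\ref{lm:5.5}~$(ii)$), that theorem yields the root condition $(iii)$ for $\widehat{\tau}$ and shows $\partial|_{A_{\omega}}$ has the form~\eqref{eq:7}. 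Finally, since $\partial$ is defined on all of $A$ and restricts to $A_{\omega}$, the extension criterion of the preceding lemma gives precisely $(iv),(v),(vi)$.

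Conversely, assuming $(i)$--$(vi)$, I would run the construction backwards. Conditions $(i),(ii),(ii')$ place us in the hypotheses of the Corollary following Corollary~\ref{cor:5.6}, so that $A_{\omega} \simeq A[C,\DD_{\omega}]$ with $\DD_{\omega}$ supported at $0$ and, in the elliptic case, at $\infty$. Condition $(iii)$ is the root condition of Theorem~\ref{th:5.8} for this $\DD_{\omega}$, so that theorem produces a homogeneous LFIHD $\partial$ of horizontal type on $A_{\omega}$ of degree $e$, given by~\eqref{eq:7} and determined by the integers $0 \leq s_{1} < \cdots < s_{r}$ in positive characteristic, or by a single scalar $\lambda\in\KK^{*}$ in characteristic zero. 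Conditions $(iv),(v),(vi)$ are exactly the hypotheses of the preceding lemma guaranteeing that this $\partial$ extends from $A_{\omega}$ to $A = A[C,\DD]$; the extension remains horizontal of degree $e$ with the same kernel weight cone $\omega$. For the explicit kernel, I would invoke Lemma~\ref{lm:5.4}~$(iv)$ to identify the sublattice $L = h^{-1}(\ZZ)$ and Lemma~\ref{lm:5.4}~$(i),(ii)$ to pin down each generator $\varphi_{m}\chi^{m}$ via $\divi\varphi_{m} + \DD(m) = 0$ on $\AF^{1}_{\KK}$, respectively off $z_{\infty}$ on $\PP^{1}_{\KK}$, with Lemma~\ref{lem:5.2}~$(i)$ ensuring $\ker\partial$ is the associated semigroup algebra.

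The step requiring the most care — and the only place where the hypothesis that $\KK$ is perfect is genuinely needed — is the passage to an algebraic closure in Corollary~\ref{cor:5.6} and Lemma~\ref{lm:5.4}~$(v)$, via the base-change description of Lemma~\ref{lem1.8}: separability of $\bar{\KK}/\KK$ is what forces the distinguished points $z_{0},z_{\infty}$ to be rational and makes the support and integrality conditions insensitive to extension of scalars, so that they can be verified over $\bar{\KK}$ and descended. Granting this, the remaining content is the verification that the root condition of Theorem~\ref{th:5.8} stated for $\widehat{\sigma}$ coincides, under the reduction to $A_{\omega}$, with condition $(iii)$ stated for $\widehat{\tau}$, together with the purely combinatorial matching of the integers $d,k$ across the two intermediate results.
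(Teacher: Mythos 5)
Your proposal is correct and follows essentially the same route as the paper, which itself gives no separate argument beyond declaring the theorem ``a direct consequence of the results in this section'': you assemble Lemma~\ref{lem:5.2}, Lemma~\ref{lm:5.4}, Corollary~\ref{cor:5.6} and its successor, Theorem~\ref{th:5.8}, and the extension lemma in exactly the intended way, including the reduction to $A_{\omega}$ and the observation that replacing $e$ by $e+m$ with $m\in\omega_{L}$ leaves the root condition unchanged.
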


\begin{exemple}
  Let the notation be as in Example~\ref{ex:1.8}. By
  Theorem~\ref{th:5.11}, there exists a homogeneous LFIHD on $A$ with
  degree $\deg\partial=e=(1,2)$ and with weight cone $\omega$ of
  $\ker\partial$ equal to the cone generated by $(0,1)$ and $(1,1)$ in
  $M_\RR$. Indeed, $(i)$ holds since $C=\PP^1_\KK$ and $(ii)'$ holds
  with $z_0=0$ and $z_\infty=\infty$. With this choice,
  $h_z|_\omega=0$ for all $z\in C, z\neq 0,\infty$. The vector $v\in
  N_\RR$ such that $h(m)=\langle m,v\rangle$ corresponds to
  $v=(1/2,0)$.  The cone $\tau$ is generated in $N_\RR$ by $(1,0)$ and
  $(-1,1)$ and the cone $\widehat{\tau}$ in $\widehat{N}_\RR$ is
  generated by $(1,0,2)$, $(-1,1,0)$ and $(1,0,-2)$. Taking $s_1=0$,
  we have that $(e,-1)=(1,2,-1)\in\rt\widehat{\tau}$ so that $(iii)$
  holds. Furthermore, a straightforward verification shows that
  $(iv)$, $(v)$ and $(vi)$ hold.
\end{exemple}

\begin{exemple}
  We assume in this example that the ground field $\KK$ is
  algebraically closed of characteristic $2$. Let us consider the
  Bertin surface
  $$W_{2,5} = \{x^{2}y = x + z^{5}\}\subseteq \AF_{\KK}^{3}$$
  of type $(2,5)$. This is a smooth affine surface endowed with the
  $\G_{m}$-action
  $$\lambda\cdot (x,y,z) = (\lambda^{5}x, \lambda^{-5}y, \lambda z),$$
  where $\lambda\in \G_{m}$ and $(x,y,z)\in W_{2,5}$. Consider the
  polyhedral divisor
  $$\DD = \left\{\frac{1}{5}\right\}\cdot [0] + \left[0, \frac{1}{5}\right]\cdot [1]$$ 
  over the affine line $\AF^{1} = \AF_{\KK}^{1}$. Here we have
  $N = M = \ZZ$.  The elements
  $$x = t^{-1}\chi^{5},\,\, y  = (t+1)t\chi^{-5},\,\, z = \chi^{1}$$
  generate the $\ZZ$-graded algebra $A = A[\AF^{1}, \DD]$ and satisfy
  the equation of $W_{2,5}$. Hence we may identify the
  $\G_{m}$-surface $X = \spec\,A$ with $W_{2,5}$. The quotient map by
  the $\G_{m}$-action is
  $$\pi:(x,y,z)\mapsto t = xy +1.$$  
  The fiber $\pi^{-1}(1)$ consists in two distinct toric curves which
  intersect only at the origin:
  $$\pi^{-1}(1) = \{(0, y, 0)\,|\, y\in \KK\}\cup \{(z^{5}, 0, z)\,|\,z\in\KK\}.$$  
  In the setting of Theorem~\ref{th:5.11}, we may take $z_{0} = 0$ so
  that $\tau = \RR_{\geq 0}$ and
  $$\hat{\tau} = \RR_{\geq 0}(1,0) + \RR_{\geq 0}(1,5).$$

  If $e = 1$ and $s := s_{1} = 2$, then
  $(2^{s}e, -\frac{1}{5} - \frac{2^{s}e}{5}) = (4, -1)$ is a Demazure
  root of $\hat{\tau}$ with distinguished ray $(1,5)$. Condition (iv)
  of Theorem~\ref{th:5.11} is not fulfilled. The corresponding
  homogeneous iterative higher derivation $\partial$ verifies the
  formula
  $$e^{\alpha \partial}(t^{l}\chi^{m}) = \sum_{i=0}^{\infty}\binom{5l + m}{i} t^{l-i}\chi^{m+4i}\alpha^{4i}$$
  for any $(m,l)\in \ZZ^{2}$.  This implies directly that
  $$e^{\alpha \partial}(x) = x \text{ and } e^{\alpha \partial}(z)  = z + \alpha^{4}x,$$
  and so the subalgebra $\KK[x, z]\subseteq A$ is $\partial$-stable.
  However, we have $\partial^{(4)}(y) = t\chi^{-1}\not\in A.$

  Now let us take $e = 1$ and $s = 6$. Then
  $(2^{s}e, -\frac{1}{5} - \frac{2^{s}e}{5}) = (64, -13)$ is a
  Demazure root of $\hat{\tau}$. The conditions of Theorem 5.11 are
  satisfied and the associated LFIHD $\partial'$ has exponential map
  $$e^{\alpha \partial'}(t^{l}\chi^{m}) = \sum_{i=0}^{\infty}\binom{5l + m}{i} t^{l-13i}\chi^{m+64i}\alpha^{64i}.$$
  Therefore
  $$e^{\alpha \partial'}(x) = x, \text{  } e^{\alpha \partial'}(z)  = z + \alpha^{64}x^{13},$$
  and
  $$e^{\alpha\partial'}(y) = x^{-1}(1+e^{\alpha\partial'}(t))= y + \alpha^{64} x^{11} z^{4} + \alpha^{256} x^{50} z + \alpha^{320} x^{63}.$$
  The kernel of $\partial'$ is the subalgebra $\KK[x]\subseteq A$.
\end{exemple}

\begin{remarque}
  A generalization of \cite[Section 4.1]{Li} allows to define and
  compute the homogeneous Makar-Limanov invariant of an affine $\TT$-variety of
  complexity one of arbitrary characteristic. Due to lack of space, we
  omit this straightforward generalization.
\end{remarque}

\section*{Acknowledgements}

We thanks the referee for valuable remarks. We thank the Institut
Fourier, where most of this work was carried out, for its support and
hospitality. The first author also thanks the jury members of his Ph.D
thesis for many suggestions and corrections.

The first author was partially supported by the Max Planck for
Mathematics, Bonn. The second author was partially supported by
Fondecyt project 11121151 and by internal funds from Direcci\'on de
Investigaci\'on, Vicerrector\'\i a Acad\'emica, Universidad de Talca.


\begin{thebibliography}{KKMS73}


\bibitem[AH06]{AH} K. Altmann, J. Hausen. \emph{Polyhedral divisors
    and algebraic torus actions.} Math. Ann. {\bf 334} (2006), no. 3,
  557--607.

\bibitem[AHS08]{AHS} K. Altmann, J. Hausen, H. Suess. \emph{Gluing
    affine torus actions via divisorial fans.} Transform. Groups {\bf
    13} (2008), no. 2, 215--242.

\bibitem[AIPSV12]{AIPSV} K. Altmann, N. Ilten, L. Petersen, H. Suess,
  R. Vollmert. \emph{The geometry of T-varieties.}  Contributions to
  algebraic geometry, 17--69, EMS Ser. Congr. Rep., Eur. Math. Soc.,
  Zürich, 2012.

\bibitem[AL12]{AL} I. Arzhantsev, A. Liendo.  \emph{Polyhedral 
    divisors and ${\rm SL}_{2}$-actions on affine T-varieties.}
  Michigan Math. J. {\bf 61} (2012), no. 4, 731--762.


\bibitem[AHHL14]{AHHL} I. Arzhantsev, J. Hausen, E. Herppich,
  A. Liendo.  \emph{ The automorphism group of a variety with torus
    action of complexity one.}  Mosc. Math. J. {\bf 14} (2014), no. 3,
  429--471.

\bibitem[Asa05]{As} T. Asanuma. \emph{Purely inseparable k-forms of
    affine algebraic curves.} Affine algebraic geometry, 31--46,
  Contemp. Math., 369, Amer. Math. Soc., Providence, RI, 2005.

\bibitem[Baz13]{Ba} I. Bazhov.  \emph{On orbits of the automorphism
    group on a complete toric variety.}  Beitr. Algebra Geom. {\bf 54}
  (2013), no. 2, 471--481.

\bibitem[Cra04]{Cr} A. Crachiola.  {\em On the AK invariant of certain
    domains.} Ph.D. Thesis. Wayne State University. (2004).

\bibitem[CM05]{CM} A. Crachiola, L. Makar-Limanov.  \emph{On the
    rigidity of small domains.} J. Algebra {\bf 284} (2005), no. 1,
  1--12.

\bibitem[Cox95]{Cox95} D. Cox. \emph{Homogeneous coordinate ring of a
    toric variety.}  J. Algebraic Geom. {\bf 4} (1995), no. 1, 17--50.

\bibitem[Cox14]{Cox14} D. Cox. \emph{Erratum to ``Homogeneous
    coordinate ring of a toric variety''.}  J. Algebraic Geom. {\bf
    23} (2014), no. 2, 393--398.

\bibitem[Dem70]{Dem} M. Demazure. \emph{Sous-groupes alg\'ebriques de
    rang maximum du groupe de Cremona.}  Ann. Sci. \'Ecole
  Norm. Sup. (4) {\bf 3} 1970 507--588.

\bibitem[Dem88]{Dem88} M. Demazure. \emph{Anneaux gradu\'es normaux.}
  Introduction à la th\'eorie des singularit\'es, II, 35--68, Travaux
  en Cours, 37, Hermann, Paris, 1988.

\bibitem[FK91]{FiKa} K.H. Fieseler, L. Kaup.  \emph{On the geometry of
    affine algebraic $\mathbb{C}^{\star}$-surfaces.} Problems in the
  theory of surfaces and their classification (Cortona, 1988),
  111--140, Sympos. Math., XXXII, Academic Press, London, 1991.

\bibitem[FZ03]{FZ} H. Flenner, M. Zaidenberg.  \emph{Normal affine
    surfaces with $\mathbb{C}^{\star}$-actions.} Osaka J. Math. {\bf
    40} (2003), no. 4, 981--1009.

\bibitem[FZ05]{FZ2} H. Flenner, M. Zaidenberg.  \emph{Locally
    nilpotent derivations on affine surfaces with a
    $\mathbb{C}^{\star}$-action.}  Osaka J. Math. {\bf 42} (2005),
  no. 4, 931--974.

\bibitem[JKS05]{JKS} S. Jeong, M-S. Kim, J-W. Son. \emph{On explicit
    formulae for Bernoulli numbers and their counterparts in positive
    characteristic.} J. Number Theory {\bf 113} (2005), no. 1, 53--68.

\bibitem[KKMS73]{KKMS} G. Kempf, F. Knudsen, D. Mumford,
  B. Saint-Donat. \emph{Toroidal embeddings.} I.  Lecture Notes in
  Mathematics, Vol. 339. Springer-Verlag, Berlin-New York, 1973.

\bibitem[Kot11]{Ko} P. Kotenkova.  \newblock {\em On restriction of
    roots of affine $T$-varieties.}  \newblock
  arXiv:1104.0560v3. (2011). To appear in Beitr\"age zur Algebra und
  Geometrie.

\bibitem[Kur03]{Ku} S. Kuroda.  \emph{A condition for finite
    generation of the kernel of a derivation.}  J. Algebra {\bf 262}
  (2003), no. 2, 391--400.
 
\bibitem[Lan15]{La2} K. Langlois.  \emph{Polyhedral divisors and
    algebraic torus actions of complexity one over arbitrary fields,}
  J. Pure Appl. Algebra 219 (2015), no. 6, 2015--2045.

\bibitem[LP14]{LP} K. Langlois, A. Perepechko. \emph{Demazure roots
    and spherical varieties: the example of horizontal ${\rm
      SL}_{2}$-actions.} arXiv:1406.5744v1. (2014).

\bibitem[Lie10a]{Li} A. Liendo. \emph{Affine $\mathbb{T}$-varieties of
    complexity one and locally nilpotent derivations.}
  Transform. Groups {\bf 15} (2010), no. 2, 389--425.

\bibitem[Lie10b]{Li2} A. Liendo \emph{$\mathbb{G}_{a}$-actions of fiber type on
    affine $\mathbb{T}$-varieties.}  J. Algebra {\bf 324} (2010),
  no. 12, 3653--3665.

\bibitem[Lie11]{Li3} A. Liendo.  \emph{Roots of the affine Cremona
    group.} Transform. Groups {\bf 16} (2011), no. 4, 1137--1142.

\bibitem[Mau10]{Ma} A. Maurischat.  \emph{Galois theory for iterative
    connections and nonreduced Galois groups.}
  Trans. Amer. Math. Soc. 362 (2010), no. 10, 5411--5453.

\bibitem[Miy68]{Mi} M. Miyanishi.  \newblock \emph{A remark on an
    iterative infinite higher derivation.} J. Math. Kyoto Univ. {\bf
    8} 1968 411--415.

\bibitem[Nak78]{Nak78} Y. Nakai. \emph{On locally finite iterative
    higher derivations.} Osaka J. Math. {\bf 15} (1978), no. 3,
  655--662.

\bibitem[Nil06]{Ni} B. Nill. \emph{Complete toric varieties with
    reductive automorphism group.} Math. Z. {\bf 252} (2006), no. 4,
  767--786.

\bibitem[Oda88]{Od} T. Oda. \emph{Convex bodies and algebraic
    geometry. An introduction to the theory of toric varieties.}
  Translated from the Japanese. Ergebnisse der Mathematik und ihrer
  Grenzgebiete, 15. Springer-Verlag, Berlin, 1988.

\bibitem[Ros63]{Ro} M. Rosenlicht. \emph{A remark on quotient spaces.}
  An. Acad. Brasil. Ci. {\bf 35} (1963), 487--489.

\bibitem[Rus70]{Ru} P. Russell.  \emph{Forms of the affine line and
    its additive group.} Pacific J. Math. 32 1970 527--539.

\bibitem[Sti93]{St} H. Stichtenoth.  \emph{Algebraic function fields
    and codes.} Universitext. Springer-Verlag, Berlin, 1993.

\bibitem[Tim97]{Ti2} D. A. Timash\"ev. \emph{Classification of
    G-manifolds of complexity $1$}. (Russian) Izv. Ross. Akad. Nauk
  Ser. Mat. {\bf 61} (1997), no. 2, 127--162; translation in
  Izv. Math. {\bf 61} (1997), no. 2, 363--397.

\bibitem[Tim08]{Ti} D.A. Timash\"ev. \emph{Torus actions of complexity
    one.} Toric topology, 349--364, Contemp. Math., 460,
  Amer. Math. Soc., Providence, RI, 2008.

\bibitem[Voj07]{Vo} P. Vojta.  \emph{Jets via Hasse-Schmidt
    derivations.} Diophantine geometry, 335--361, CRM Series, 4,
  Ed. Norm., Pisa, 2007.

\end{thebibliography}
\end{document}